\newcommand{\midarrow}{\tikz \draw[-triangle 60] (0,0) -- +(.05,0);}
\tikzset{commutative diagrams/.cd,every label/.append style = {font = \normalsize}}
\numberwithin{equation}{section}
\newtheorem*{theorem*}{Theorem}
\newtheorem*{corollary*}{Corollary}
\newtheorem{theorem}[equation]{Theorem}
\newtheorem{corollary}[equation]{Corollary}
\newtheorem{lemma}[equation]{Lemma}
\newtheorem{proposition}[equation]{Proposition}
\theoremstyle{definition}
\newtheorem{defn}[equation]{Definition}
\newtheorem{definition}[equation]{Definition}
\newtheorem*{pf_no_qed}{Proof}
\newtheorem{eg_no_qed}[equation]{Example}
\newenvironment{example}[1][]{\begin{eg_no_qed}[#1]\pushQED{\qed}}{\popQED\end{eg_no_qed}}
\newtheorem{remark}[equation]{Remark}
\theoremstyle{remark}
\newtheorem*{claimpf_no_qed}{Proof of Claim}
\newcommand{\Le}{\textup{\protect\scalebox{-1}[1]{L}}}
\font\pipefont=lcircle10
\def\elbow{\smash{\raise3pt\hbox{\pipefont\rlap{\rlap{\char'014}\char'016}}}}
\def\halfelbow{\smash{\raise2pt\hbox{\pipefont\rlap{\rlap{\rlap{\char'015}\phantom{\char'017}}}}}}
\def\cross{\smash{\lower5pt\hbox{\rlap{\vrule height16pt}}\raise3pt\hbox{\rlap{\hskip-8pt \vrule height0.4pt depth0pt width16pt}}}}
\def\Poly{\Pi}
\def\O{\mathcal{O}}
\def\B{\mathcal{B}}
\def\PP{\mathbb{P}}
\def\TT{\mathbb{T}}
\DeclareMathOperator{\convex}{convex}
\DeclareMathOperator{\Web}{Web}
\DeclareMathOperator{\Dr}{Dr}
\newcommand{\R}{\mathbb{R}}
\newcommand{\RR}{\mathbb{R}}
\newcommand{\Z}{\mathbb{Z}}
\newcommand{\K}{\mathbb{K}}
\newcommand{\x}{\mathbf{x}}
\DeclareMathOperator{\val}{val}
\newcommand{\rf}[1]{\hyperref[#1]{(\ref*{#1})}}
\DeclareMathOperator{\Trop}{Trop}
\DeclareMathOperator{\wt}{wt}
\DeclareMathOperator{\Wt}{Wt}
\DeclareMathOperator{\cD}{\mathcal{D}}
\DeclareMathOperator{\cG}{\mathcal{G}}
\DeclareMathOperator{\Trans}{\mathrm{Trans}} 
\DeclareMathOperator{\Span}{\mathrm{Span}}
\title
 {The positive Dressian equals the positive tropical Grassmannian}
\author{David Speyer}
\author{Lauren K.\ Williams}
\address{}
\email{\href{mailto:speyer@umich.edu}{speyer@umich.edu}}
\email{\href{mailto:williams@math.harvard.edu}{williams@math.harvard.edu}}
\thanks{DS was partially supported by NSF grants
 DMS-1855135 and  DMS-1854225.
LW was partially supported by NSF grants DMS-1854316 and DMS-1854512.}
\begin{document}

\begin{abstract}
	The \emph{Dressian} and the \emph{tropical Grassmannian} parameterize 
abstract and realizable tropical linear spaces; but in general,
the Dressian is much larger than the tropical Grassmannian.
There are natural 
	positive notions of both of these spaces -- the 
	\emph{positive Dressian}, and the \emph{positive tropical Grassmannian} 
	(which we introduced 
	 roughly fifteen years
	ago in \cite{troppos}) --  so it is natural
to ask how these two positive spaces compare.  In this paper
we show that the positive Dressian equals the positive
tropical Grassmannian.  Using the connection between the 
positive Dressian and regular positroidal subdivisions of the 
hypersimplex, we use our result to give a new ``tropical'' proof of 
	da Silva's 1987 conjecture (first proved in 2017 by Ardila-Rinc\`{o}n-Williams)
that all positively oriented matroids are realizable. 
We also show that the finest regular positroidal subdivisions 
of the hypersimplex consist of series-parallel matroid polytopes,
and achieve equality in Speyer's 
\emph{$f$-vector theorem}.  Finally we give
an example of a positroidal subdivision of the hypersimplex
which is not regular, and make a connection to the theory of 
tropical hyperplane arrangements.
\end{abstract}

\maketitle
\setcounter{tocdepth}{1}
\tableofcontents

\section{Introduction}\label{sec_intro}

The \emph{tropical Grassmannian}, first studied in \cite{HKT, KT, Speyer_2004}, 
is the space
of \emph{realizable tropical linear spaces}, obtained by applying the valuation map to Puisseux-series valued
elements of the usual Grassmannian.
Meanwhile the \emph{Dressian} is the space of \emph{tropical Pl\"ucker vectors} 
$P = \{P_I\}_{I \in {[n] \choose k}}$, first studied by Andreas Dress,
who called them \emph{valuated matroids}.  Thinking of each tropical Pl\"ucker vector $P$ as a \emph{height
function} on the vertices of the hypersimplex $\Delta_{k,n}$, one can show that the Dressian 
parameterizes 
regular matroid subdivisions $\mathcal{D}_P$ 
of the hypersimplex \cite{Kapranov, Speyer}, which in turn are dual to the 
\emph{abstract tropical linear spaces} of the first author \cite{Speyer}.

There are positive notions of both of the above spaces.  
The \emph{positive tropical Grassmannian}, introduced by the authors in \cite{troppos}, is the space
of \emph{realizable positive tropical linear spaces}, obtained by applying the valuation map to 
Puisseux-series valued elements of the \emph{totally positive Grassmannian} \cite{postnikov, lusztig}.
The \emph{positive Dressian} is the space of \emph{positive tropical Pl\"ucker vectors}, and it 
was recently shown to parameterize the 
regular positroidal subdivisions of the hypersimplex \cite{LPW, PosConfig}.\footnote{Although 
this result did not appear in the literature until recently, it was anticipated by 
various people including the first author, Nick Early \cite{Early}, Felipe Rinc\`{o}n, Jorge Olarte.}

In general, the Dressian $\Dr_{k,n}$ is much larger than the 
tropical Grassmannian $\Trop Gr_{k,n}$ -- for example, 
the dimension of the Dressian $\Dr_{3,n}$ grows quadratically is $n$,
while the dimension of the tropical Grassmannian $\Trop Gr_{3,n}$ is 
linear in $n$
\cite{Herrmann2008HowTD}.  
However, the situation for their positive parts is different.
The first main result of this paper is the following, see \cref{thm:1}.
\begin{theorem*}
The positive tropical Grassmannian $\Trop^+Gr_{k,n}$ equals
the positive Dressian $\Dr^+_{k,n}$.\footnote{Our result was announced in 
	\cite[Theorem 9.6]{LPW}, and subsequently appeared in the independent work \cite{PosConfig}.}
\end{theorem*}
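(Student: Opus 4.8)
The plan is to prove the two inclusions separately. The containment $\Trop^+Gr_{k,n}\subseteq\Dr^+_{k,n}$ is essentially formal. Work over a field $\K$ of Puiseux series with real exponents, with valuation $\val$ and positive cone $\K_{>0}$ (series with positive leading coefficient). A point of $\Trop^+Gr_{k,n}$ has the form $P=(\val\,p_I(x))_{I\in\binom{[n]}{k}}$ for some $x$ in the totally positive Grassmannian over $\K$, so each $p_I(x)\in\K_{>0}$. Applying $\val$ to each three-term Pl\"ucker relation $p_{Sik}p_{Sjl}=p_{Sij}p_{Skl}+p_{Sil}p_{Sjk}$ (with $i<j<k<l$) and using that $\val$ is multiplicative while $\val(a+b)=\min(\val a,\val b)$ for $a,b\in\K_{>0}$ (no leading-term cancellation) yields $P_{Sik}+P_{Sjl}=\min(P_{Sij}+P_{Skl},\,P_{Sil}+P_{Sjk})$, which is precisely what defines $\Dr^+_{k,n}$.

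For the reverse inclusion I would use the cluster structure on the Grassmannian. Fix a cluster $\mathcal{C}\subseteq\binom{[n]}{k}$ coming from a standard seed, so $|\mathcal{C}|=k(n-k)+1$ and its cluster and frozen variables are all Pl\"ucker coordinates, and invoke two standard inputs: (i) every Pl\"ucker coordinate $p_J$ equals a Laurent polynomial $F_J$ with nonnegative coefficients in $\{p_I:I\in\mathcal{C}\}$ (positivity of the Laurent phenomenon together with positivity of Pl\"ucker coordinates on the positive Grassmannian); and (ii) for any $(a_I)_{I\in\mathcal{C}}\in\K_{>0}^{\mathcal{C}}$ there is a point $x$ of the totally positive Grassmannian over $\K$ with $p_I(x)=a_I$ for $I\in\mathcal{C}$ --- i.e. the positive Grassmannian is the positive part of the cluster torus of any seed. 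Let $\Trop(F_J)\colon\R^{\mathcal{C}}\to\R$ denote the tropicalization of $F_J$ (replace $+,\times,\div$ by $\min,+,-$); this is well defined since evaluating a subtraction-free expression on elements of $\K_{>0}$ commutes with $\val$, so the answer is independent of the chosen expression for $p_J$. Taking $a_I=t^{t_I}$ shows that the map $\phi\colon\R^{\mathcal{C}}\to\R^{\binom{[n]}{k}}$ defined by $\phi((t_I))_J:=\Trop(F_J)((t_I)_{I\in\mathcal{C}})$ has image contained in $\Trop^+Gr_{k,n}$, hence in $\Dr^+_{k,n}$.

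It then remains to prove $\Dr^+_{k,n}\subseteq\image\phi$, i.e. that every $P\in\Dr^+_{k,n}$ satisfies $P_J=\Trop(F_J)((P_I)_{I\in\mathcal{C}})$ for all $J$ --- equivalently, that a positive tropical Pl\"ucker vector is determined by its restriction to a single cluster, via the tropicalized exchange relations. I would argue by induction along a chain of seed mutations from $\mathcal{C}$ to a seed containing $p_J$, each mutation chosen to be a three-term Pl\"ucker relation $p_{Sac}p_{Sbd}=p_{Sab}p_{Scd}+p_{Sad}p_{Sbc}$: the relation $P_{Sac}+P_{Sbd}=\min(P_{Sab}+P_{Scd},\,P_{Sad}+P_{Sbc})$ built into the definition of $\Dr^+_{k,n}$ is exactly the tropicalization of that exchange relation, so the cluster coordinates of the mutated seed --- and inductively $P_J$ itself --- are the tropical mutation images of $(P_I)_{I\in\mathcal{C}}$. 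This gives $\Dr^+_{k,n}=\image\phi=\Trop^+Gr_{k,n}$. I expect the main obstacle to be this last step: one must check that every Pl\"ucker coordinate lies in a seed reachable from the standard seed through mutations that are three-term Pl\"ucker relations (so the tropical computation applies at each step), and that the tropicalized expressions obtained along different such chains agree --- the latter following from the well-definedness of the tropicalization of subtraction-free rational functions noted above.
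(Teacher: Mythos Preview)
Your proposal is correct and follows essentially the same strategy as the paper. The paper works with the specific ``corectangles'' Pl\"ucker cluster $\mathcal{C}=\{p_{K(i,j)}\}$ and the explicit plabic-graph parameterization $\Phi_{G_0}$ of $(Gr_{k,n})_{>0}$ (together with its explicit inverse $\Psi$), but the logical skeleton is identical to yours: produce a point of $\Trop^+Gr_{k,n}$ agreeing with $P\in\Dr^+_{k,n}$ on a fixed Pl\"ucker cluster, then propagate agreement to all Pl\"ucker coordinates via tropicalized three-term relations. The obstacle you anticipate---that every Pl\"ucker coordinate lies in a seed reachable from the initial one through mutations that are three-term Pl\"ucker relations---is exactly what the paper resolves by citing \cite{OhSpeyer} (all Pl\"ucker clusters are connected by three-term mutations) and \cite{OPS} (every Pl\"ucker coordinate lies in some Pl\"ucker cluster). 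One small remark: the inclusion $\Trop^+Gr_{k,n}\subseteq\Dr^+_{k,n}$ is in fact immediate from the paper's definitions (the three-term relations are a subset of the Pl\"ucker ideal), so your Puiseux-series argument, while correct, is more than needed.
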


We give several interesting applications of \cref{thm:1}.  
The first application is a new proof of the following 
1987 conjecture of 
da Silva, which was proved in 2017 by Ardila,
Rinc\`{o}n and the second author~\cite{ARW2}, 
using the combinatorics of positroid polytopes.  
\begin{theorem*} \cite{ARW2}
Every positively oriented matroid
is realizable.
\end{theorem*}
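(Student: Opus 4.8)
The plan is to obtain this as a short consequence of \cref{thm:1}. After a suitable labeling of the ground set by $[n]$ (which, like reorientation, does not affect realizability), a positively oriented matroid of rank $k$ is a matroid $M$ on $[n]$ whose chirotope $\chi$ equals $1$ on every increasing $k$-subset that is a basis of $M$ and $0$ on every other increasing $k$-subset; a realization of it is precisely a real $k\times n$ matrix representing $M$ all of whose maximal minors, taken in increasing column order, are nonnegative. Fix such an $M$ and define $P=P^M\in(\R\cup\{\infty\})^{\binom{[n]}{k}}$ by $P_I=0$ if $I$ is a basis of $M$ and $P_I=\infty$ otherwise. I will show $P\in\Dr^+_{k,n}$, invoke \cref{thm:1} to realize $P$ as the tropicalization of a totally nonnegative point over the real Puiseux series, and extract a real realization of $M$ from the residue of that point.

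Recall that $\Dr^+_{k,n}$ consists of the vectors obeying the three-term positive tropical Pl\"ucker relations: for all $S\in\binom{[n]}{k-2}$ and $a<b<c<d$ in $[n]\setminus S$,
\[
P_{Sac}+P_{Sbd}\;=\;\min\bigl(P_{Sab}+P_{Scd},\;P_{Sad}+P_{Sbc}\bigr).
\]
For $P=P^M$, this relation says exactly that $Sac$ and $Sbd$ are both bases of $M$ precisely when $Sab,Scd$ are both bases or $Sad,Sbc$ are both bases. Since every nonzero value of $\chi$ equals $1$, this is nothing but the three-term Grassmann--Pl\"ucker relation for $\chi$ in disguise: writing the classical identity as $p_{Sac}p_{Sbd}=p_{Sab}p_{Scd}+p_{Sad}p_{Sbc}$, the oriented matroid axioms forbid precisely those sign patterns in which the left-hand term is nonzero while both right-hand terms vanish, and conversely. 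Hence $P^M$ satisfies all of these relations, so $P^M\in\Dr^+_{k,n}$.

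By \cref{thm:1}, $P^M\in\Trop^+Gr_{k,n}$, so there is a point $x$ of the totally nonnegative Grassmannian over the field of real Puiseux series with $\val\Delta_I(x)=P^M_I$ for every $I$. Fix a basis $B$ of $M$ and row-reduce so that the columns indexed by $B$ become the identity matrix; this changes neither the point of the Grassmannian nor the valuations $P^M_I$, leaves all maximal minors nonnegative, and forces every entry of the matrix to have nonnegative valuation, since the entries of column $j$ are, up to sign, quotients $\Delta_{(B\setminus b)+j}(x)/\Delta_B(x)$ of valuation $P^M_{(B\setminus b)+j}\ge 0$. Taking the coefficient of $t^0$ in each entry yields a real $k\times n$ matrix $X$, and $\Delta_I(X)$ is then the coefficient of $t^0$ in $\Delta_I(x)$: it is nonnegative, and positive exactly when $\val\Delta_I(x)=0$, that is, exactly when $I$ is a basis of $M$. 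Thus $X$ realizes $M$.

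The crux is the first step — recognizing $P^M$ as a solution of the positive tropical Pl\"ucker relations. One must keep the bookkeeping straight, ensuring that the ``crossing'' pair $\{a,c\},\{b,d\}$ singled out by $a<b<c<d$ sits on the minimum-attaining side of the relation, and one must make sure \cref{thm:1} is applied in the version that permits tropical Pl\"ucker vectors with entries equal to $\infty$, that is, valuated matroids whose underlying matroid need not be uniform. The remaining steps are routine.
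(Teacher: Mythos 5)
Your overall strategy matches the paper's (reduce realizability to \cref{thm:1} and then specialize a Puiseux-series realization at $t=0$), but the Pl\"ucker vector you choose creates a gap that is not a technicality. Setting $P^M_I=0$ for bases and $P^M_I=\infty$ otherwise puts $P^M$ outside $\R^{\binom{[n]}{k}}$, whereas \cref{thm:1} as proved applies only to finite vectors. You flag that an $\infty$-allowing version is needed, but that version is not a formal corollary of the finite one. First, a naive truncation $\infty\mapsto N$ does not preserve membership in the positive Dressian: if the minor $M'=(M/S)|_{\{a,b,c,d\}}$ has $ac$ as its unique basis (say $b$ and $d$ are loops of $M'$), then $P^M_{Sac}+P^M_{Sbd}$, $P^M_{Sab}+P^M_{Scd}$, $P^M_{Sad}+P^M_{Sbc}$ are all $\infty$ and the relation holds trivially, yet after truncation the left side is $N$ while the right-hand minimum is $2N$, so the truncated vector leaves the Dressian. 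Second, and more seriously, the extended statement you want to invoke — that a valuated matroid with arbitrary support satisfying the positive three-term relations lies in $\Trop^+Gr_{k,n}$ (with $\infty$ allowed) — already asserts that such a support is a positroid, since the vanishing locus of the Pl\"ucker coordinates of a point of $Gr^{\ge 0}_{k,n}(\K)$ is a positroid by definition. The paper's remark about generalizing \cref{thm:1} to positroid cells presupposes exactly that the support is a positroid; using it here is circular, because that is precisely what you are trying to prove.

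The paper avoids all of this with a different, finite choice of Pl\"ucker vector: $P_I=-\rho_M(I)$, which lands in $\R^{\binom{[n]}{k}}$, lies in the Dressian by \cite[Proposition 4.4]{Speyer}, and has $\Gamma_M$ as a face of the induced subdivision $\mathcal{D}_P$; one then checks it lies in the positive Dressian by verifying a rank identity for the four-element minor $M'$ (using that positively oriented matroids are closed under minors), and concludes via \cref{thm:1} and \cref{lem:realizable}. Your observation that the chirotope Grassmann--Pl\"ucker axiom, with all signs in $\{0,+\}$, forces the $\{0,\infty\}$-vector to satisfy the positive tropical three-term relations is correct and pleasant, but it establishes only which pairs are bases of $M'$, not the rank identity the paper needs for its finite $P$, and in any case cannot repair the circularity above. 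To make the argument work you would need to replace $P^M$ by a finite vector (as the paper does) or give an independent proof of the $\infty$-extension of \cref{thm:1} that does not presuppose da Silva's conjecture.
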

\noindent Reformulating this statement in the language of Postnikov's 2006 
preprint~\cite{postnikov}, da Silva's conjecture 
says that every positively oriented matroid
is a \emph{positroid}.  
We give a new proof of this statement, using 
 \cref{thm:1}, which we think of as a ``tropical version" of da Silva's conjecture.
Interestingly, although the definitions
of positively oriented matroid and positroid don't involve tropical geometry at all, 
there does not seem to be an easy way to remove the tropical geometry from our proof 
without making it significantly longer.

There are two natural fan structures on the Dressian: the \emph{Pl\"ucker fan},
and the \emph{secondary fan,} which were shown in 
 \cite{Olarte} to coincide.
Our second application of \cref{thm:1} is a description of the maximal cones in the 
positive Dressian, or equivalently, the finest regular positroidal subdivisions of the hypersimplex.
The following result appears as \cref{thm:octahedron}.
\begin{theorem*}
Let $P$ be a positive tropical Pl\"ucker vector, and consider the 
corresponding 
regular positroidal subdivision $\mathcal{D}_P$. 
The following statements are equivalent:
	\begin{enumerate}
		\item $\mathcal{D}_P$ is a finest subdivision.
		\item Every facet of $\mathcal{D}_P$ is the matroid polytope of a series-parallel matroid.
		\item Every octahedron in $\mathcal{D}_P$ is subdivided.
	\end{enumerate}
\end{theorem*}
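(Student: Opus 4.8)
The plan is to prove the cycle of implications $(1)\Rightarrow(2)\Rightarrow(3)\Rightarrow(1)$, using throughout the dictionary between faces of a matroid polytope $\Gamma_M$ and minors of $M$, together with the fact (coming from the identification of the positive Dressian with the regular positroidal subdivisions) that every cell of $\mathcal{D}_P$ is a positroid polytope, the facets being polytopes of connected positroids. First I would record two preliminaries. The octahedral $3$-dimensional faces of $\Delta_{k,n}$ are exactly the faces obtained by fixing $k-2$ coordinates to $1$ and all but four of the remaining coordinates to $0$; such a face is indexed by a $(k-2)$-set $I$ and a $4$-set $\{p<q<r<s\}$ disjoint from $I$, and it fails to be subdivided by $\mathcal{D}_P$ precisely when the three numbers $P_{Ipq}+P_{Irs}$, $P_{Ipr}+P_{Iqs}$, $P_{Ips}+P_{Iqr}$ are all equal; by positivity of $P$ the middle one always equals $\min(P_{Ipq}+P_{Irs},\,P_{Ips}+P_{Iqr})$, so when the face is subdivided it splits into two square pyramids, each a positroid polytope. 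Secondly, since $\Delta_{2,4}$ is the only octahedral hypersimplex and is not a nontrivial product of simplices, $\Gamma_M$ has a face combinatorially isomorphic to an octahedron if and only if $M$ has a $U_{2,4}$-minor (the face, cut out by fixing some coordinates to $0$ and $1$, being the matroid polytope of that minor).

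For $(1)\Rightarrow(2)$, suppose $\mathcal{D}_P$ is finest and let $\Gamma_M$ be a facet with $M$ not series-parallel. Since $M$ is connected, the excluded-minor characterization of series-parallel matroids produces a minor of $M$ isomorphic to $U_{2,4}$ or to $M(K_4)$; since $M(K_4)$ is not a positroid and positroids are closed under taking minors, $M$ must in fact have a $U_{2,4}$-minor, and hence $\Gamma_M$ has an octahedral face $O$. As $O$ lies inside the single cell $\Gamma_M$ of $\mathcal{D}_P$ it is not subdivided, i.e.\ the three-term relation corresponding to $O$ is degenerate for $P$. To contradict the hypothesis that $\mathcal{D}_P$ is finest I would then refine it across this degeneracy: by \cref{thm:1} we may write $P=\val(x)$ for a totally positive Puiseux-valued point $x$, and since total positivity is an open condition we may perturb $x$ to a nearby totally positive $x'$ for which the three-term relation at $O$ becomes strict while each three-term relation already strict for $P$ remains strict of the same type; then $\mathcal{D}_{\val(x')}$ is a regular positroidal subdivision strictly refining $\mathcal{D}_P$.

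For $(2)\Rightarrow(3)$ I argue the contrapositive: if some octahedral face $O$ of $\Delta_{k,n}$ is not subdivided then $O$ is a face of some facet $\Gamma_M$, and since $O$ is an octahedron it witnesses a $U_{2,4}$-minor of $M$, so $M$ is not series-parallel. For $(3)\Rightarrow(1)$: if every octahedron is subdivided then every three-term relation for $P$ is strict, so $P$ lies in a maximal cone of the Plücker fan on the Dressian, which by \cite{Olarte} coincides with the secondary fan; hence $\mathcal{D}_P$ is a finest regular matroid subdivision, and in particular no positroidal subdivision properly refines it.

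The hard part will be the refinement step inside $(1)\Rightarrow(2)$: its real content is that in the positive setting a degenerate three-term relation --- equivalently, an un-subdivided octahedron --- is never forced, and the cleanest route I see goes through the realizability provided by \cref{thm:1} and a perturbation of a totally positive matrix. The care needed there is to choose the perturbation small enough that the resulting subdivision genuinely refines $\mathcal{D}_P$ (so that the strict three-term relations persist unchanged) while still breaking exactly the intended degeneracy and preserving total positivity; an appropriate totally positive column operation on the columns indexed by $p,q,r,s$ should accomplish this. The other point to be pinned down, feeding the matroid input above, is that $M(K_4)$ is not a positroid --- equivalently, that no positroid has an $M(K_4)$-minor --- which, via the excluded-minor characterization of series-parallel matroids, is exactly what upgrades ``$M$ has no $U_{2,4}$-minor'' to ``$M$ is series-parallel''.
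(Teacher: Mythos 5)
Your cycle of implications $(1)\Rightarrow(2)\Rightarrow(3)\Rightarrow(1)$ differs from the paper's $(3)\Rightarrow(2)\Rightarrow(1)\Rightarrow(3)$, and two of your three steps are genuinely different from the paper's. Your $(2)\Rightarrow(3)$ (contrapositive: an unsubdivided octahedron $O$ is a face of some facet $\Gamma_M$, witnessing a $U_{2,4}$ minor) is fine, and your $(3)\Rightarrow(1)$ via ``all three-term relations strict $\Rightarrow$ maximal cone of the Pl\"ucker fan $=$ maximal cone of the secondary fan'' is a pleasant shortcut that sidesteps the paper's use of the $f$-vector theorem \cref{thm:equality}; the claim that a cone with every relation strict is maximal is correct (its closure contains only cones with more degeneracies, never fewer), though you should say this explicitly.

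The gap is in $(1)\Rightarrow(2)$, which is where the real work lives. You propose to realize $P = \val(x)$ over the Puiseux field via \cref{thm:1} and ``perturb $x$ to a nearby totally positive $x'$'' so that the degenerate three-term relation at $O$ becomes strict while all strict relations retain their types. This does not work as stated: over a non-Archimedean field, a perturbation that is ``small'' in the valuation topology (adding something of high valuation) does not change $\val$ at all, so it cannot break the degeneracy; while a perturbation that does alter valuations is not small in any useful sense and can change types elsewhere unpredictably. The ``totally positive column operation on columns $p,q,r,s$'' suggestion does not fix this --- it is a perturbation of Puiseux coefficients, and the same dichotomy applies. The correct place to perturb is directly in the real parameter space: the paper parametrizes $\Trop^+Gr_{k,n}$ by tropical flow parameters $X_\mu \in \R^{k(n-k)}$ on $\Web_{k,n}$ (\cref{tropicalparam}), reduces to generic $X_\mu$ (which is legitimate precisely because $\mathcal{D}_P$ is finest), and then needs a genuine structural argument --- the ``switching tails'' analysis of optimal flows --- to show that generic $X_\mu$ forces $P_{Sab}+P_{Scd} \ne P_{Sad}+P_{Sbc}$. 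That content is not automatic: it is a priori possible that the two flows computing each side of the equality use the same multiset of faces, in which case no choice of $X_\mu$ breaks the tie. Your sketch does not address this, and it is exactly the substantive point. So the proposal is correct in its easy steps and in its use of \cite{Olarte}, but the hard implication still needs the paper's flow-level argument (or an equivalent) to justify that the degeneracy at $O$ can in fact be broken within $\Trop^+Gr_{k,n}$.
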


It was shown by the first author in \cite{Ktheory} that 
if $P$ is a tropical Pl\"ucker vector corresponding to a realizable tropical linear space,
$\mathcal{D}_P$ has 
at most $\frac{(n-c-1)!}{(k-c)!(n-k-c)!(c-1)!}$ interior faces of 
dimension $n-c$, with equality if and only if all facets of 
$\mathcal{D}_P$ correspond to series-parallel matroids.  We refer to this result as the 
\emph{$f$-vector theorem}.
Combining this result with \cref{thm:octahedron} 
gives the following elegant result (see \cref{cor:equality}): 
\begin{corollary*}
Every finest positroidal subdivision of $\Delta_{k,n}$ achieves
equality in the $f$-vector theorem.
In particular, such a positroidal 
	subdivision has precisely ${n-2 \choose k-1}$ facets (top-dimensional polytopes).
\end{corollary*}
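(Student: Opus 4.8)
The plan is to deduce the corollary by concatenating \cref{thm:1}, \cref{thm:octahedron}, and the $f$-vector theorem of \cite{Ktheory}. Let $\mathcal{D}_P$ be a finest positroidal subdivision of $\Delta_{k,n}$, where $P$ is a positive tropical Pl\"ucker vector inducing it; this is precisely the setting in which \cref{thm:octahedron} applies.

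The one substantive step is to observe that $\mathcal{D}_P$ is induced by a \emph{realizable} tropical linear space. Indeed $P \in \Dr^+_{k,n}$, so by \cref{thm:1} we have $P \in \Trop^+Gr_{k,n} \subseteq \Trop Gr_{k,n}$; hence $P$ is the valuation of a genuine Puiseux-series-valued point of the Grassmannian, i.e.\ $P$ corresponds to a realizable tropical linear space. This is the only place where realizability enters, and it is exactly what makes the $f$-vector theorem applicable: a priori a finest positroidal subdivision is merely a ``Dressian'' object, and the $f$-vector bound is established only in the realizable setting.

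Next I would feed this into the remaining two results. Since $\mathcal{D}_P$ is finest, the implication $(1)\Rightarrow(2)$ of \cref{thm:octahedron} shows that every facet of $\mathcal{D}_P$ is the matroid polytope of a series-parallel matroid. Combined with realizability, the equality clause of the $f$-vector theorem then gives that, for every $c$, the number of interior faces of $\mathcal{D}_P$ of dimension $n-c$ equals $\frac{(n-c-1)!}{(k-c)!\,(n-k-c)!\,(c-1)!}$; that is, $\mathcal{D}_P$ achieves equality in the $f$-vector theorem. For the ``in particular'' clause, specialize to $c=1$: the facets of $\mathcal{D}_P$ are its top-dimensional cells, of dimension $\dim\Delta_{k,n}=n-1$, and each is automatically an interior face since $\partial\Delta_{k,n}$ has dimension $n-2$. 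Setting $c=1$ in the formula yields $\frac{(n-2)!}{(k-1)!\,(n-k-1)!\,0!}=\binom{n-2}{k-1}$, so $\mathcal{D}_P$ has exactly $\binom{n-2}{k-1}$ facets.

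The proof presents no genuine obstacle beyond recognizing \cref{thm:1} as the bridge carrying a finest positroidal subdivision from the positive Dressian into the realizable world; the remainder is a direct application of the cited structural results together with the elementary $c=1$ specialization of the $f$-vector formula.
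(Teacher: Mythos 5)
Your proof is correct and follows exactly the route the paper intends: the paper's proof is simply the statement ``Combining \cref{thm:1}, \cref{thm:equality}, and \cref{thm:octahedron}, we now have the following,'' and you have spelled out that combination faithfully, including the one non-obvious point (that \cref{thm:1} is what licenses the application of the realizability-restricted $f$-vector theorem) and the routine $c=1$ specialization.
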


Most of our paper concerns the 
\emph{regular} positroidal subdivisions of $\Delta_{k,n}$, which 
are precisely those induced by positive tropical Pl\"ucker vectors.
However, it is also natural to consider the set of \emph{all} 
positroidal subdivisions of $\Delta_{k,n}$, whether or not they are regular.
In light of the various nice realizability results for positroids, one might hope that 
all positroidal subdivisions of $\Delta_{k,n}$ are regular.
However, this is not the case.
In \cref{sec:nonregular}, 
we construct a nonregular positroidal subdivision of $\Delta_{3,12}$, based off 
a standard example of a nonregular mixed subdivision of $9 \Delta_2$.  
We also make a connection to the theory of tropical hyperplane arrangements 
and tropical oriented matroids~\cite{ArdilaDevelin, Horn}.

It is interesting to note that the positive tropical Grassmannian and the positive Dressian 
have recently appeared in the study of scattering amplitudes in 
$\mathcal{N}=4$ SYM \cite{Drummond:2019cxm, Arkani-Hamed:2019plo, Henke:2019hve, Early:2019eun, 
LPW, PosConfig},
and in certain scalar theories  \cite{Cachazo:2019ngv, Borges:2019csl}.
In particular, the second author together with Lukowski and Parisi \cite{LPW} gave striking evidence that 
 the positive tropical 
Grassmannian $\Trop^+Gr_{k+1,n}$ controls the regular positroidal subdivisions of 
the \emph{amplituhedron} $\mathcal{A}_{n,k,2} \subset Gr_{k,k+2}$, which was 
introduced by Arkani-Hamed and Trnka \cite{arkani-hamed_trnka}
to study scattering amplitudes in $\mathcal{N}=4$ SYM.

The structure of this paper is as follows.  In \cref{sec:posGrass} we review the notion of the 
positive Grassmannian and its cell decomposition, as well as matroid and positroid polytopes.
In \cref{sec:tropical}, after introducing the notions of the (positive) tropical Grassmannian 
and (positive) Dressian, we show that the positive tropical Grassmannian equals the positive Dressian.
We review the connection between the positive tropical Grassmannian and positroidal subdivisions
in \cref{sec:DP}, then give a new proof in \cref{sec:realizable} that every positively oriented matroid
is realizable.  We give several characterizations
of finest positroidal subdivisions of the hypersimplex in \cref{sec:finest}, and show that such 
subdivisions achieve equality in the $f$-vector theorem.  Then in \cref{sec:nonregular}, we 
construct a nonregular positroidal subdivision of $\Delta_{3,12}$, and make a connection
to the theory of tropical hyperplane arrangements and tropical oriented matroids
\cite{ArdilaDevelin, Horn}.  We end our paper with an appendix (\cref{app}), which reviews
some of Postnikov's technology \cite{postnikov} for studying positroids.

\textsc{Acknowledgements:}
This material is based upon work supported by the National Science Foundation
under agreement No.\ DMS-1855135, No.\ DMS-1854225,  No.\ DMS-1854316 and No.\ DMS-1854512.  Any opinions,
findings and conclusions or recommendations expressed in this material
are those of the authors and do not necessarily reflect  the
views of the National Science Foundation.

\section{The positive Grassmannian and positroid polytopes}\label{sec:posGrass}

\begin{defn} The {\itshape (real) Grassmannian} $Gr_{k,n}$ (for $0\le k \le n$) is the space of all $k$-dimensional subspaces of $\R^n$.  An element of
$Gr_{k,n}$ can be viewed as a $k\times n$ matrix of rank $k$ modulo invertible row operations, whose rows give a basis for the $k$-dimensional subspace.
\end{defn}

Let $[n]$ denote $\{1,\dots,n\}$, and $\binom{[n]}{k}$ denote the set of all $k$-element subsets of $[n]$. Given $V\in Gr_{k,n}$ represented by a $k\times n$ matrix $A$, for $I\in \binom{[n]}{k}$ we let $p_I(V)$ be the $k\times k$ minor of $A$ using the columns $I$. The $p_I(V)$ do not depend on our choice of matrix $A$ (up to simultaneous rescaling by a nonzero constant), and are called the {\itshape Pl\"{u}cker coordinates} of $V$.

\subsection{The positive Grassmannian and its cells}
\begin{defn}[{\cite[Section~3]{postnikov}}]\label{def:positroid}
	We say that $V\in Gr_{k,n}$ is {\itshape totally nonnegative} (respectively,
	\emph{totally positive}) if $p_I(V)\ge 0$ (resp. $p_I(V) > 0$) for all $I\in\binom{[n]}{k}$.  
	The set of all totally nonnegative $V\in Gr_{k,n}$ is the {\it totally nonnegative Grassmannian} $Gr_{k,n}^{\geq 0}$ and the set of all totally positive $V$
	is the \emph{totally positive Grassmannian} $Gr_{k,n}^{>0}$.
	For $M\subseteq \binom{[n]}{k}$, let $S_{M}$ be
the set of $V\in Gr_{k,n}^{\geq 0}$ with the prescribed collection of Pl\"{u}cker coordinates strictly positive (i.e.\ $p_I(V)>0$ for all $I\in M$), and the remaining Pl\"{u}cker coordinates
equal to zero (i.e.\ $p_J(V)=0$ for all $J\in\binom{[n]}{k}\setminus M$). If $S_M\neq\emptyset$, we call $M$ a \emph{positroid} and $S_M$ its \emph{positroid cell}.
\end{defn}

Each positroid cell $S_{M}$ is indeed a topological cell \cite[Theorem 6.5]{postnikov}, and moreover, the positroid cells of $Gr_{k,n}^{\ge 0}$ glue together to form a CW complex \cite{PSW}.

As shown in \cite{postnikov}, the cells of $Gr_{k,n}^{\geq 0}$
are in bijection 
with various combinatorial objects, including 
\emph{decorated permutations} $\pi$ on $[n]$ with $k$ anti-excedances, 
\emph{\Le -diagrams} $D$ of type $(k,n)$, and equivalence classes of \emph{reduced plabic graphs} $G$ of type $(k,n)$.
In \cref{app} we review these objects and give bijections between them.  This gives a canonical way to label each positroid by a decorated permutation, a \Le-diagram, and an equivalence class of plabic graphs; we will correspondingly refer to positroid cells as $S_{\pi}$, 
$S_D$, etc.






\subsection{Matroid and positroid polytopes}\label{sec:polytopes}


In what follows, we set 
$e_I := \sum_{i \in I} e_i \in \R^n$, where $\{e_1, \dotsc, e_n\}$ is the standard basis of $\RR^n$.

\begin{definition}\label{def:mpolytope}
Given a matroid $M=([n],\B)$, the (basis) \emph{matroid polytope} $\Gamma_M$ of $M$ is the convex hull of the indicator vectors of the bases of~$M$:
\[
\Gamma_M := \convex\{e_B \mid B \in \B\} \subset \RR^n.
\]
\end{definition}



The dimension of a matroid polytope is determined by the number of 
connected components of the matroid.
Recall that a matroid which cannot be written as the direct sum
of two nonempty matroids is called \emph{connected}.

\begin{proposition} \cite{Oxley}.
\label{prop:equiv}
Let $M$ be a matroid on $E$.  For two elements $a, b \in E$, we set
$a \sim b$ whenever there are bases $B_1, B_2$ of $M$ such that
	$B_2 = (B_1 - \{a\}) \cup \{b\}$.  The relation $\sim$ is an equivalence
relation, and the equivalence classes are precisely the connected components of $M$.
\end{proposition}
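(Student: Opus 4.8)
The plan is to reduce the statement to the standard description of matroid connectivity in terms of circuits. Reflexivity and symmetry of $\sim$ are essentially formal: symmetry follows because $B_2=(B_1\setminus\{a\})\cup\{b\}$ can be rewritten as $B_1=(B_2\setminus\{b\})\cup\{a\}$, and reflexivity holds because any element lying in some basis $B$ is witnessed by $B=(B\setminus\{a\})\cup\{a\}$ (the handful of elements lying in no basis — the loops — are treated as singleton classes, matching the fact that a loop is always its own connected component). So the real content of the proposition is transitivity, together with the identification of the $\sim$-classes with the connected components.

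The key step I would isolate first is the reformulation: for distinct $a,b\in E$, one has $a\sim b$ if and only if $M$ has a circuit containing both $a$ and $b$. For the forward implication, if $B_1,B_2$ are bases with $B_2=(B_1\setminus\{a\})\cup\{b\}$ and $B_1\ne B_2$, then necessarily $a\in B_1$ and $b\notin B_1$, so $B_1\cup\{b\}$ contains a unique circuit $C$, namely the fundamental circuit of $b$ with respect to $B_1$; since $B_2=(B_1\cup\{b\})\setminus\{a\}$ is independent, $C$ must meet $a$, and hence $C$ contains both $a$ and $b$. For the converse, given a circuit $C$ with $a,b\in C$, extend the independent set $C\setminus\{a\}$ to a basis $B_1$; then $a\notin B_1$, the fundamental circuit of $a$ with respect to $B_1$ is exactly $C$ (it is the unique circuit in $B_1\cup\{a\}$ and contains $C\setminus\{a\}$), and because $b\in C$ the set $(B_1\cup\{a\})\setminus\{b\}$ contains no circuit, hence is a basis — exhibiting $b\sim a$, so $a\sim b$.

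Given this reformulation, transitivity of $\sim$ is exactly transitivity of the relation ``being contained in a common circuit,'' which is the classical fact (a consequence of the circuit elimination axiom, see \cite{Oxley}) that these blocks partition $E$. It then remains to match the blocks with the connected components. On the one hand, if $M=M_1\oplus M_2$ with ground sets $E_1\sqcup E_2$, then every basis of $M$ meets $E_i$ in a set of the fixed size $\rank(M_i)$, so a single exchange cannot carry an element of $E_1$ to one of $E_2$; hence every $\sim$-class is contained in a single component. On the other hand, in a connected matroid any two elements lie in a common circuit — another standard characterization of connectivity, \cite{Oxley} — so by the reformulation all elements of a connected component are $\sim$-equivalent, and each component is a single $\sim$-class.

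I expect the transitivity to be the only genuine obstacle, and the point of the argument is that it should \emph{not} be approached by direct basis bookkeeping: once the circuit reformulation is in place, transitivity is inherited for free from circuit elimination. The one spot requiring a little care is the converse direction of the reformulation, where one must check that the chosen basis extension of $C\setminus\{a\}$ really does have $C$ as the fundamental circuit of $a$, so that deleting $b$ from $B_1\cup\{a\}$ leaves an independent set; everything else is bookkeeping at the level of sizes and direct sums.
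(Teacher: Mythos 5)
The paper does not give its own proof of this proposition; it is cited directly from Oxley, so there is no internal argument to compare against. Your reconstruction is correct and is essentially the standard textbook proof: reduce the basis-exchange relation to the relation ``lie in a common circuit'' via fundamental circuits, then invoke the classical facts that the latter relation is transitive (circuit elimination) and that a matroid is connected if and only if every pair of elements lies in a common circuit. Both directions of your reformulation check out. In the forward direction you correctly deduce $a\in B_1$, $b\notin B_1$, and that the fundamental circuit of $b$ with respect to $B_1$ must contain $a$ because $(B_1\cup\{b\})\setminus\{a\}$ is independent. In the converse direction you correctly observe that a basis extension of $C\setminus\{a\}$ cannot contain $a$, and that the fundamental circuit of $a$ relative to that basis must be $C$ itself (being the unique circuit in $B_1\cup\{a\}$ and containing the circuit $C$), so removing $b\in C$ yields an independent set of full rank.

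One point of precision worth flagging, which you handle implicitly but should make explicit: as literally written, the paper's definition of $\sim$ allows a degenerate ``exchange'' in which $a\notin B_1$ and $b\in B_1$, forcing $B_2=B_1$. Under that literal reading $\sim$ is not even symmetric --- a loop $a$ would satisfy $a\sim b$ for any non-loop $b$, but not conversely --- and the conclusion about connected components would fail. Your proof tacitly restricts to the nontrivial case $a\neq b$, $B_1\neq B_2$ (equivalently $a\in B_1$, $b\notin B_1$), which is the intended reading and the one under which the proposition, and your argument for it, are correct. The treatment of loops as singleton classes and coloops as their own components is also consistent with this reading.
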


\begin{proposition}\label{prop:dim} \cite{Coxeter}
For any matroid,
 the dimension of its matroid polytope is 
$\dim \Gamma_M = n-c$, where $c$ is the number of connected components of 
$M$.  
\end{proposition}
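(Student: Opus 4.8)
The plan is to prove both inequalities $\dim\Gamma_M\le n-c$ and $\dim\Gamma_M\ge n-c$. Write $M=M_1\oplus\cdots\oplus M_c$ as the direct sum of its connected components, on ground sets $E_1,\dots,E_c$ which partition $[n]$, and put $r_j:=\rank M_j$. Every basis $B$ of $M$ is a disjoint union of bases of the $M_j$, so $e_B$ lies on the $c$ hyperplanes $\sum_{i\in E_j}x_i=r_j$; these equations are linearly independent because the $E_j$ are disjoint and nonempty, so $\Gamma_M$ is contained in an affine subspace of dimension $n-c$ and $\dim\Gamma_M\le n-c$.

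For the matching lower bound I would first reduce to the connected case. Since a basis of $M$ is precisely a disjoint union $B_1\sqcup\cdots\sqcup B_c$ of bases $B_j$ of the $M_j$, under the identification $\R^n\cong\R^{E_1}\times\cdots\times\R^{E_c}$ the vertex set of $\Gamma_M$ is the product of the vertex sets of the $\Gamma_{M_j}$, hence $\Gamma_M=\Gamma_{M_1}\times\cdots\times\Gamma_{M_c}$ and $\dim\Gamma_M=\sum_j\dim\Gamma_{M_j}$. So it suffices to prove that a \emph{connected} matroid $N$ on a set of size $m$ satisfies $\dim\Gamma_N\ge m-1$; summing over components and using the upper bound then gives $\dim\Gamma_M=\sum_j(|E_j|-1)=n-c$. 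The case $m=1$ is immediate, so assume $m\ge2$.

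The core step is to show that the direction space of the affine hull of $\Gamma_N$ contains $e_b-e_a$ for every pair of distinct elements $a,b$ of $N$; since these vectors (for $a$ fixed, $b$ varying) span the $(m-1)$-dimensional hyperplane $\{x:\sum_i x_i=0\}$, this yields $\dim\Gamma_N\ge m-1$. Fix $a\ne b$. As $N$ is connected, $a$ and $b$ lie in one equivalence class of the relation $\sim$ of \cref{prop:equiv}, so there are bases $B_1,B_2$ of $N$ with $B_2=(B_1\setminus\{a\})\cup\{b\}$, and one can arrange $a\in B_1$, $b\notin B_1$ (so that $B_1\ne B_2$): for instance $a$ and $b$ lie in a common circuit $\mathcal{C}$ of $N$, and extending $\mathcal{C}\setminus\{b\}$ to a basis $B_1$ forces $a\in B_1\not\ni b$ with $(B_1\setminus\{a\})\cup\{b\}$ a basis, since $\mathcal{C}$ is then the fundamental circuit of $b$. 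Then $e_b-e_a=e_{B_2}-e_{B_1}$ is a difference of two vertices of $\Gamma_N$, hence lies in the direction space of its affine hull.

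The step I expect to be the main obstacle is the last one: passing from the \emph{abstract} exchange relation $\sim$ of \cref{prop:equiv} to an \emph{honest} exchange between two \emph{distinct} bases, so that the pair $\{a,b\}$ really contributes the nonzero vector $e_b-e_a$ to the affine hull rather than the zero vector. This is exactly the assertion that inside each connected component the single-swap exchange graph on the ground set is connected, and it is handled by the standard common-circuit description of matroid connectedness together with the fundamental-circuit exchange axiom; the rest of the argument is bookkeeping with the disjoint coordinate blocks $\R^{E_j}$.
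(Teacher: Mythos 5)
Your proof is correct, and it is worth noting that the paper itself gives no proof of this proposition---it simply cites \cite{Coxeter} as an external reference---so there is no internal argument to compare against. Your argument is the standard one: the upper bound $\dim\Gamma_M\le n-c$ from the $c$ independent rank equations $\sum_{i\in E_j}x_i=r_j$, the reduction to the connected case via $\Gamma_M=\Gamma_{M_1}\times\cdots\times\Gamma_{M_c}$, and the lower bound $\dim\Gamma_N\ge m-1$ for connected $N$ by exhibiting $e_b-e_a$ as a difference of two basis indicator vectors. The one place requiring care is exactly where you flag it: the relation $\sim$ of \cref{prop:equiv}, read literally, is satisfied whenever $a\notin B_1\ni b$ with $B_2=B_1$, which contributes only the zero vector. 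You correctly sidestep this by invoking the common-circuit characterization of connectedness and the fundamental-circuit exchange (extend $\mathcal{C}\setminus\{b\}$ to $B_1$, so $a\in B_1\not\ni b$ and $(B_1\setminus\{a\})\cup\{b\}$ is a genuine distinct basis), which requires $m\ge 2$ so that a connected matroid has no loops or coloops; you handle $m=1$ separately. Everything checks out, and the proof fills a gap the paper leaves to the cited reference.
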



Recall that any full
rank $k\times n$ matrix $A$ gives rise to a matroid 
$M(A)=([n],\B)$, where $\B = \{I \in \binom{[n]}{k} \ \vert \ p_I(A) \neq 0\}$.
\emph{Positroids} are the
matroids $M(A)$ associated to $k\times n$ matrices $A$ with 
maximal minors all nonnegative.  
We call the matroid polytope $\Gamma_M$ associated to a positroid a 
\emph{positroid polytope}.

\section{The positive tropical Grassmannian equals the positive Dressian}\label{sec:tropical}

In this section we 
review the notions of the tropical Grassmannian, the Dressian,
the positive tropical Grassmannian, and the positive Dressian.
The main theorem of this section is \cref{thm:1}, which says that 
the positive tropical Grassmannian equals the positive Dressian.


\begin{definition}\label{def:trophyper}
	Given $e=(e_1,\dots,e_N) \in \Z^N_{\geq 0}$, we let 
	$\mathbf{x}^e$ denote $x_1^{e_1} \dots x_N^{e_N}$.  
	Let  $E \subset \Z^N_{\geq 0}$.
	For $f = \sum_{e\in E} f_e \x^e$ a nonzero polynomial, 
	we denote by 
	$\Trop(f) \subset \R^N$ the set of all points 
	$(X_1,\dots, X_N)$ such that, if we form the collection of 
	numbers $\sum_{i=1}^N e_i X_i$ for $e$ ranging over $E$, then
	the minimum of this collection is not unique.
	We say that $\Trop(f)$ is the \emph{tropical hypersurface associated to $f$}.
\end{definition}

In our examples, we always consider polynomials $f$ with real coefficients.
We also have a positive version of \cref{def:trophyper}.

\begin{definition}
	Let  $E=E^+ \sqcup E^- \subset \Z^N_{\geq 0}$, and let
	 $f$ be a nonzero polynomial with real
	coefficients which we write as 
	$f = \sum_{e\in E^+} f_e \x^e - \sum_{e\in E^-} f_e \x^e$,
	where all of the coefficients $f_e$ are nonnegative real numbers.
	We  denote by 
	$\Trop^+(f) \subset \R^N$ the set of all points 
	$(X_1,\dots, X_N)$ such that, if we form the collection of 
	numbers $\sum_{i=1}^N e_i X_i$ for $e$ ranging over $E$, then
	the minimum of this collection is not unique and furthermore
	is achieved for some $e\in E^+$ and some $e\in E^-$.
	We say that $\Trop^+(f)$ is the \emph{positive part of 
	$\Trop(f)$.}
\end{definition}

The Grassmannian $Gr_{k,n}$ is a projective variety which can be embedded
in projective space $\PP^{\binom{[n]}{k}-1}$, and is cut out by the 
\emph{Pl\"ucker ideal}, that is, the ideal of relations satisfied by 
the Pl\"ucker coordinates of a generic $k \times n$ matrix.
These relations include the three-term Pl\"ucker relations, defined below.

\begin{definition}\label{def:3}
Let $1<a<b<c<d\leq n$ 
and choose a subset $S \in \binom{[n]}{k-2}$ which is disjoint from $\{a,b,c,d\}$.  
Then $p_{Sac} p_{Sbd} = p_{Sab} p_{Scd}+p_{Sad} p_{Sbc}$ is 
a \emph{three-term Pl\"ucker relation} for the Grassmannian $Gr_{k,n}$.
Here $Sac$ denotes $S \cup \{a,c\}$, etc.
\end{definition}

\begin{definition}\label{rem:tropPlucker}
Given $S, a, b, c, d$ as in \cref{def:3}, 
we say that the \emph{tropical three-term
Pl\"ucker relation holds} if 
\begin{itemize}
	\item $P_{Sac}+P_{Sbd} = P_{Sab}+P_{Scd} \leq P_{Sad}+P_{Sbc}$ 
		or 
	\item $P_{Sac}+P_{Sbd} = 
P_{Sad}+P_{Sbc} \leq 
P_{Sab}+P_{Scd}$
or 
\item $P_{Sab}+P_{Scd} = P_{Sad}+P_{Sbc}
\leq	P_{Sac}+P_{Sbd}$.
	\end{itemize}
	And we say that the \emph{positive tropical three-term
Pl\"ucker relation holds} if either of the first two conditions above holds.
\end{definition}

\begin{definition}\label{def:tropGrass}
	The \emph{tropical Grassmannian} $\Trop Gr_{k,n} \subset \R^{\binom{[n]}{k}}$ is 
	the intersection of 
	the tropical hypersurfaces $\Trop(f)$, where $f$ ranges over all
	elements of the Pl\"ucker ideal.
	The \emph{Dressian} $\Dr_{k,n} \subset 
	 \R^{\binom{[n]}{k}}$ is the intersection of 
	the tropical hypersurfaces $\Trop(f)$, where $f$ ranges over all
	three-term Pl\"ucker relations.
\end{definition}

 The \emph{tropical Grassmannian} $\Trop Gr_{k,n}$,
first studied in \cite{tropgrass, HKT, KT},
 parameterizes tropicalizations of ordinary linear 
 spaces, defined over the field of generalized Puisseux series
 $\mathbb{K}$ in one variable $t$, with real 
exponents.  More formally, recall that there is a valuation 
$\val_{\K}: \K \setminus \{0\} \to \R$, given by $\val_{\K}(c(t)) = \alpha_0$ if 
$c(t) = \sum c_{\alpha_m} t^{\alpha_m}$, where the lowest order term is assumed to have
non-zero coefficient $c_{\alpha_0} \neq 0$.
Then $P$ lies in the tropical Grassmannian $\Trop Gr_{k,n}$ if and only if 
there is an element $A = A(t) \in Gr_{k,n}(\K)$
 whose Pl\"ucker coordinates have valuations given by $P=\{P_I\}$ 
(see \cite{Payne1, Payne2} for a proof).
 We will call elements of $\Trop Gr_{k,n}$ \emph{realizable tropical linear spaces}.  The tropical Grassmannian is a 
 proper subset of the \emph{Dressian}\footnote{also called the \emph{tropical pre-Grassmannian} in \cite{tropgrass}
 and named in \cite{Herrmann2008HowTD} for Andreas Dress' work on valuated matroids}, which 
 parameterizes what one might call \emph{abstract tropical linear spaces}.
Moreover, the Dressian has a natural fan structure, whose cones correspond to
 the regular matroidal subdivisions of the hypersimplex
\cite{Kapranov},
 \cite[Proposition 2.2]{Speyer}, see \cref{prop:K}.
Note that the Dressian $Dr_{k,n}$ 
is the subset of $\R^{[n]\choose k}$
	where the tropical  three-term Pl\"ucker relations hold. 

\begin{definition}
\label{def:postropGrass}
	The \emph{positive tropical Grassmannian} $\Trop^+Gr_{k,n} \subset \R^{\binom{[n]}{k}}$ is the intersection of 
	the positive tropical hypersurfaces $\Trop^+(f)$, 
	where $f$ ranges over all elements of the Pl\"ucker ideal.
	The \emph{positive Dressian} $\Dr^+_{k,n} \subset 
	 \R^{\binom{[n]}{k}}$ is the intersection of 
	the positive tropical hypersurfaces $\Trop^+(f)$, 
	where $f$ ranges over all
	three-term Pl\"ucker relations.
\end{definition}

The {positive
 tropical Grassmannian} was
introduced by the authors fifteen years ago in \cite{troppos},
and was shown to parameterize 
 tropicalizations of ordinary linear 
 spaces that lie in the totally positive Grassmannian 
 (defined over the field of Puiseux series).
The positive tropical Grassmannian lies inside the {positive Dressian},
 which controls the regular {positroidal} subdivisions
 of the hypersimplex \cite{LPW}, see 
\cref{prop:positroidal}.
Note that the  positive Dressian $Dr^+_{k,n}$
is the subset of $\R^{[n]\choose k}$
	where the positive tropical three-term Pl\"ucker relations hold. 

\begin{definition}
	We say that a point $\{P_I\}_{I\in \binom{[n]}{k}}\in \R^{\binom{[n]}{k}}$
	is a \emph{(finite) tropical Pl\"ucker vector} 
	if it lies in the Dressian $\Dr_{k,n}$, i.e. 
	for every three-term Pl\"ucker relation, it lies in 
	the associated tropical hypersurface.
	And we say that $\{P_I\}_{I\in \binom{[n]}{k}}$
	is a \emph{positive tropical Pl\"ucker vector},  
	if it lies in the positive Dressian $\Dr^+_{k,n}$, i.e. 
	 for every three-term Pl\"ucker relation, it lies in 
	the positive part of the associated tropical hypersurface.
\end{definition}

\begin{example}
For $Gr_{2,4}$, there is only one Pl\"ucker relation, 
$p_{13} p_{24} = p_{12} p_{34}+p_{14} p_{23}$.  
	We have that $\Trop Gr_{2,4} = \Dr_{2,4} \subset \R^{\binom{[4]}{2}}$ is 
 the set of points
$(P_{12}, P_{13}, P_{14}, P_{23}, P_{24}, P_{34})\in \R^6$ such that 
\begin{itemize}
	\item $P_{13}+P_{24} = P_{12}+P_{34} \leq P_{14}+P_{23}$ 
		or 
	\item $P_{13}+P_{24} = 
	P_{14}+P_{23} \leq 
	P_{12}+P_{34}$
	or 
\item $P_{12}+P_{34} = P_{14}+P_{23}
\leq	P_{13}+P_{24}$.
	\end{itemize}
	And $\Dr^+_{2,4} = \Trop^+Gr_{2,4} \subset \R^{\binom{[4]}{2}}$ is  
	 the set of points
	$(P_{12}, P_{13}, P_{14}, P_{23}, P_{24}, P_{34})\in \R^6$ such that 
	\begin{itemize}
		\item $P_{13}+P_{24} = P_{12}+P_{34} \leq P_{14}+P_{23}$ 
			or 
		\item $P_{13}+P_{24} = 
	P_{14}+P_{23} \leq 
	P_{12}+P_{34}$
	\end{itemize}
\end{example}

In general, the Dressian $\Dr_{k,n}$ is much larger than the 
tropical Grassmannian $\Trop Gr_{k,n}$ -- for example,
the dimension of the Dressian $\Dr_{3,n}$ grows quadratically is $n$,
while the dimension of the tropical Grassmannian $\Trop Gr_{3,n}$ is 
linear in $n$ \cite{Herrmann2008HowTD}.  
However, the situation for their positive parts is different.
The main result of this section is the following.

\begin{theorem}\label{thm:1}
The positive tropical Grassmannian $\Trop^+Gr_{k,n}$ equals
the positive Dressian $\Dr^+_{k,n}$.
\end{theorem}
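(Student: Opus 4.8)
The plan is to prove the two inclusions separately. The inclusion $\Trop^+Gr_{k,n}\subseteq \Dr^+_{k,n}$ is immediate from the definitions: the three-term Pl\"ucker relations lie in the Pl\"ucker ideal, so any point satisfying the positive tropicalization of every element of the ideal certainly satisfies it for the three-term relations. The content is therefore in the reverse inclusion $\Dr^+_{k,n}\subseteq \Trop^+Gr_{k,n}$: given a positive tropical Pl\"ucker vector $P=\{P_I\}$, we must produce an actual point $A(t)\in Gr^{>0}_{k,n}(\K)$ over the Puiseux series field whose Pl\"ucker coordinates have valuations $P_I$ (and, being in the totally positive Grassmannian, automatically tropicalize into the positive part of every Pl\"ucker relation).

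\textbf{Key steps.}
First I would reduce to a case where the combinatorics is controlled: a positive tropical Pl\"ucker vector with all $P_I$ finite corresponds to a regular positroidal subdivision $\mathcal{D}_P$ of the hypersimplex $\Delta_{k,n}$, and in particular the support is all of $\binom{[n]}{k}$, so the underlying matroid is the uniform matroid; the general (non-uniform, i.e.\ some $P_I=\infty$) case should follow by a limiting/degeneration argument or by restricting to the positroid cell determined by the support. Second, I would build the lift $A(t)$ explicitly. The natural tool is Postnikov's parametrization of $Gr^{>0}_{k,n}$ by positive parameters attached to a reduced plabic graph (or a \Le-diagram): one assigns to each parameter a monomial $c_\alpha t^{x_\alpha}$ with $c_\alpha>0$ and $x_\alpha\in\R$ chosen to match $P$, and then the Pl\"ucker coordinates, being subtraction-free polynomials in these parameters, have predictable lowest-order terms. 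Concretely, because the parametrization is subtraction-free, $\val_\K\bigl(p_I(A(t))\bigr)$ equals the minimum over the relevant monomials of the corresponding linear form in the $x_\alpha$, i.e.\ a tropicalized subtraction-free expression; one then checks that this tropical rational function of the $x_\alpha$ realizes exactly the function $I\mapsto P_I$. The cleanest way to organize this is by induction on $n$: delete the column $n$ (or use the "zig-zag"/reduction move on plabic graphs), apply the inductive hypothesis to the restricted tropical Pl\"ucker vector on $[n-1]$, and then show the three-term relations involving $n$ pin down the remaining exponents compatibly — this is exactly where positivity is essential, since it forces the minimum in each three-term relation to be attained on the two "crossing" terms $P_{Sac}+P_{Sbd}$ and never uniquely at $P_{Sab}+P_{Scd}$ or $P_{Sad}+P_{Sbc}$, which is the tropical shadow of the fact that in the positive Grassmannian $p_{Sac}p_{Sbd}$ dominates.

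\textbf{Main obstacle.}
The hard part will be the inductive step: showing that a positive tropical Pl\"ucker vector on $[n]$ restricts to one on $[n-1]$ (and on other "deletion/contraction" minors) and, conversely, that one can always extend a lift of the restriction to a lift of the whole vector while preserving total positivity over $\K$. This amounts to checking that the local moves on plabic graphs (square moves, contractions) interact correctly with valuations — each move is a subtraction-free rational transformation of the parameters, so it tropicalizes to a piecewise-linear bijection — and that no obstruction arises from the finitely many three-term relations simultaneously. Equivalently, in the language of subdivisions, one must show that the regular positroidal subdivision $\mathcal{D}_P$ is compatible with the face structure of $\Delta_{k,n}$ under taking hypersimplex faces $\Delta_{k-1,n-1}$ and $\Delta_{k,n-1}$, so that the induction is on firm footing. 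I expect the bookkeeping of which monomials survive in $p_I(A(t))$ — i.e.\ that the naive lowest-order term does not cancel — to be the most delicate point, and the reason it works is precisely the subtraction-free (positive) structure, which is why, as the introduction notes, the tropical geometry cannot easily be removed from the argument.
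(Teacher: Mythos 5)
Your high-level plan is sound and your starting point matches the paper's: the easy inclusion $\Trop^+Gr_{k,n}\subseteq \Dr^+_{k,n}$ is noted, and the serious direction is handled by tropicalizing a subtraction-free plabic-graph parametrization of $Gr^{>0}_{k,n}$ (as in \cref{tropicalparam} and \cref{network_param}) and arguing that applying $\Trop\Phi$ to suitably chosen parameters recovers $P$. However, the way you propose to close the argument — an induction on $n$ via deletion/contraction and ``pinning down the remaining exponents'' using the three-term relations at boundary index $n$ — is genuinely different from the paper's route, and it is exactly where your sketch is underdeveloped. The paper avoids induction altogether: it works with the single graph $\Web_{k,n}$, uses the explicit inverse $\Trop\Psi$ of \cref{def:invert} to set the face parameters from the ``corectangles'' Plücker coordinates $\mathcal{C}=\{P_{K(i,j)}\}$, observes that $\Trop\Phi\circ\Trop\Psi$ is the identity on that collection, and then invokes cluster theory to propagate agreement: $\mathcal{C}$ is a cluster (\cref{rem:corectangles}), all Plücker clusters are reachable from it by three-term exchanges (Oh--Speyer), and every Plücker coordinate lies in some Plücker cluster (OPS). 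Since $P$ lies in $\Dr^+_{k,n}$, every three-term tropical relation holds, so the values on $\mathcal{C}$ already determine all $P_K$; hence $Q:=\Trop\Phi(\Trop\Psi(P))=P$. This cluster-algebraic step is the precise replacement for your inductive bookkeeping — it is what guarantees that no obstruction arises from the finitely many three-term relations, a point you flag as the ``most delicate'' but do not resolve. If you pursue your inductive route you would still need a lemma of this flavor (some statement that the positive three-term relations plus the restriction to $[n-1]$ determine the remaining coordinates uniquely), which is effectively the Oh--Speyer/OPS input in disguise; the paper's formulation is cleaner because it isolates that input and applies it once globally rather than at each inductive step.
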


\cref{thm:1} was recently announced in \cite{LPW}.  It subsequently 
appeared in independent work of \cite{PosConfig}.

Before proving \cref{thm:1}, we review some results from \cite{troppos} 
which allow one to compute positive tropical varieties.

\begin{remark}\label{tropicalparam}
In \cref{app} we describe many parametrizations of cells of $(Gr_{k,n})_{\geq 0}$,
which were given by Postnikov using plabic graphs.
 \cite[Proposition 2.5]{troppos} says that 
if one has a subtraction-free rational map $f$ which surjects onto 
the positive part $V^+(J)$ of a variety (for example a cluster chart), 
then the tropicalization of this map 
surjects onto the positive tropical part $\Trop^+V(J)$ of the variety.
Therefore 
we can tropicalize each parameterization $\Phi_G$ from \cref{network_param} -- 
 to obtain a parameterization of 
a positive tropical positroid variety (in particular, 
$\Trop^+Gr_{k,n}$).
More specifically, we tropicalize $\Phi_G$ by 
replacing the positive parameters $x_{\mu}$ (with $\prod_\mu x_{\mu} = 1$) with 
real parameters $X_{\mu}$ (with $\sum_{\mu} X_{\mu} = 0$) -- and replacing products with sums and sums
with minimums in the expressions for flow polynomials.  Then 
\cite[Proposition 2.5]{troppos} say that this tropicalized map $\Trop \Phi_G$
gives a parameterization of $\Trop^+ Gr_{k,n}$. 
\end{remark}

For the proof of \cref{thm:1} it is convenient to use one particular plabic graph
(corresponding to the directed graph $\Web_{k,n}$ from \cite[Section 3]{troppos}),
see \cref{web36}.
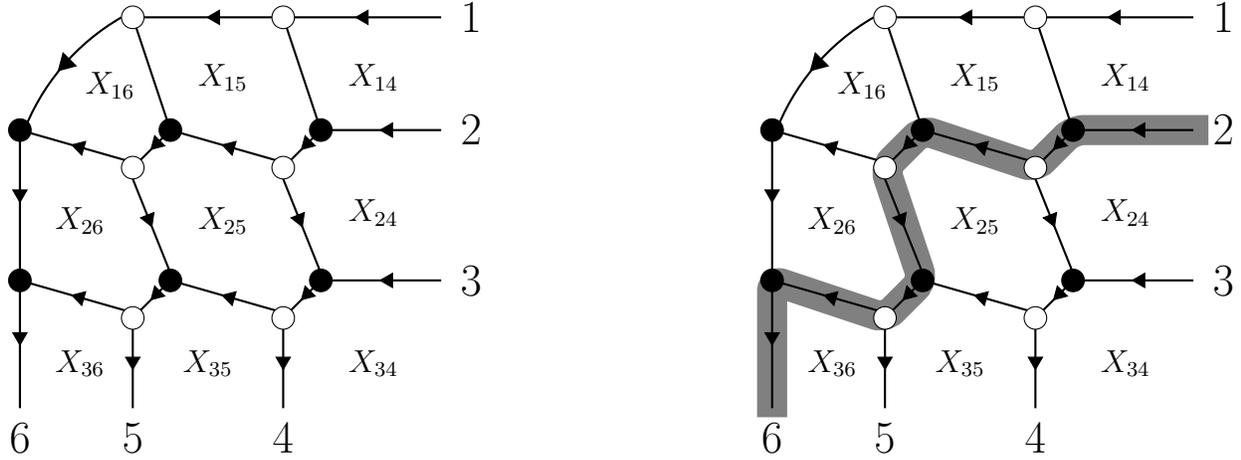
\begin{figure}[h]
\begin{tikzpicture}
\node at (2.2,6.6) {$X_{16}$};
\node at (3.7,6.7) {$X_{15}$};
\node at (5.7,6.7) {$X_{14}$};
\node at (1.8,4.8) {$X_{26}$};
\node at (3.7,4.8) {$X_{25}$};
\node at (5.7,4.9) {$X_{24}$};
\node at (1.8,2.9) {$X_{36}$};
\node at (3.5,2.9) {$X_{35}$};
\node at (5.7,2.9) {$X_{34}$};
\node at (7,7.5) {\Large 1};
\node at (7,6) {\Large 2};
\node at (7,4) {\Large 3};
\node at (4.5,1.9) {\Large 4};
\node at (2.5,1.9) {\Large 5};
\node at (1,1.9) {\Large 6};

\begin{scope}[thick, every node/.style={sloped,allow upside down}]
  \draw (2.35,7.5) arc (120:160:3);
  \draw[-triangle 60] (1.52,6.8) -- (1.5,6.78);
  \draw (4.35,7.5)-- node {\midarrow} (2.65,7.5);
  \draw (6.6,7.5)-- node {\midarrow} (4.35,7.5);
  \draw (6.6,6)-- node {\midarrow} (5,6);
  \draw (6.6,4)-- node {\midarrow} (5,4);
  \draw (1,6)-- node {\midarrow} (1,4.1);
  \draw (2.5,5.35)-- node {\midarrow} (3,4.1);
  \draw (4.5,5.35)-- node {\midarrow} (5,4.1);
  \draw (1,4)-- node {\midarrow} (1,2.3);
  \draw (2.5,3.35)-- node {\midarrow} (2.5,2.3);
  \draw (4.5,3.35)-- node {\midarrow} (4.5,2.3);
  \draw (4.9,5.9)-- node {\midarrow} (4.6,5.6);
  \draw (2.9,5.9)-- node {\midarrow} (2.6,5.6);
  \draw (4.9,3.9)-- node {\midarrow} (4.6,3.6);
  \draw (2.9,3.9)-- node {\midarrow} (2.6,3.6);
  \draw (4.4,5.6)-- node {\midarrow} (3,6);
  \draw (2.4,5.6)-- node {\midarrow} (1,6);
  \draw (4.4,3.6)-- node {\midarrow} (3,4);
  \draw (2.4,3.6)-- node {\midarrow} (1,4);
  \draw (2.5,7.5)-- (3,6);
  \draw (4.5,7.5)-- (5,6);
\end{scope}

\filldraw[fill=white] (2.5,7.5) circle (0.15cm);
\filldraw[fill=white] (4.5,7.5) circle (0.15cm);
\filldraw[fill=black] (1,6) circle (0.15cm);
\filldraw[fill=black] (3,6) circle (0.15cm);
\filldraw[fill=black] (5,6) circle (0.15cm);
\filldraw[fill=white] (2.5,5.5) circle (0.15cm);
\filldraw[fill=white] (4.5,5.5) circle (0.15cm);
\filldraw[fill=black] (1,4) circle (0.15cm);
\filldraw[fill=black] (3,4) circle (0.15cm);
\filldraw[fill=black] (5,4) circle (0.15cm);
\filldraw[fill=white] (2.5,3.5) circle (0.15cm);
\filldraw[fill=white] (4.5,3.5) circle (0.15cm);

\draw[rounded corners, gray, line width=4mm] 
  (16.8,6) -- (15,6) -- (14.5,5.5) -- (13,6) -- (12.5,5.5)
  -- (13,4) -- (12.5,3.5) -- (11,4) -- (11,2.18);
  
\node at (12.2,6.6) {$X_{16}$};
\node at (13.7,6.7) {$X_{15}$};
\node at (15.7,6.7) {$X_{14}$};
\node at (11.8,4.8) {$X_{26}$};
\node at (13.7,4.8) {$X_{25}$};
\node at (15.7,4.9) {$X_{24}$};
\node at (11.8,2.9) {$X_{36}$};
\node at (13.5,2.9) {$X_{35}$};
\node at (15.7,2.9) {$X_{34}$};
\node at (17,7.5) {\Large 1};
\node at (17,6) {\Large 2};
\node at (17,4) {\Large 3};
\node at (14.5,1.9) {\Large 4};
\node at (12.5,1.9) {\Large 5};
\node at (11,1.9) {\Large 6};

\begin{scope}[thick, every node/.style={sloped,allow upside down}]
  \draw (12.35,7.5) arc (120:160:3);
  \draw[-triangle 60] (11.52,6.8)-- (11.5,6.78);
  \draw (14.35,7.5)-- node {\midarrow} (12.65,7.5);
  \draw (16.6,7.5)-- node {\midarrow} (14.35,7.5);
  \draw (16.6,6)-- node {\midarrow} (15,6);
  \draw (16.6,4)-- node {\midarrow} (15,4);
  \draw (11,6)-- node {\midarrow} (11,4.1);
  \draw (12.5,5.35)-- node {\midarrow} (13,4.1);
  \draw (14.5,5.35)-- node {\midarrow} (15,4.1);
  \draw (11,4)-- node {\midarrow} (11,2.3);
  \draw (12.5,3.35)-- node {\midarrow} (12.5,2.3);
  \draw (14.5,3.35)-- node {\midarrow} (14.5,2.3);
  \draw (14.9,5.9)-- node {\midarrow} (14.6,5.6);
  \draw (12.9,5.9)-- node {\midarrow} (12.6,5.6);
  \draw (14.9,3.9)-- node {\midarrow} (14.6,3.6);
  \draw (12.9,3.9)-- node {\midarrow} (12.6,3.6);
  \draw (14.4,5.6)-- node {\midarrow} (13,6);
  \draw (12.4,5.6)-- node {\midarrow} (11,6);
  \draw (14.4,3.6)-- node {\midarrow} (13,4);
  \draw (12.4,3.6)-- node {\midarrow} (11,4);
  \draw (12.5,7.5)-- (13,6);
  \draw (14.5,7.5)-- (15,6);
\end{scope}

\filldraw[fill=white] (12.5,7.5) circle (0.15cm);
\filldraw[fill=white] (14.5,7.5) circle (0.15cm);
\filldraw[fill=black] (11,6) circle (0.15cm);
\filldraw[fill=black] (13,6) circle (0.15cm);
\filldraw[fill=black] (15,6) circle (0.15cm);
\filldraw[fill=white] (12.5,5.5) circle (0.15cm);
\filldraw[fill=white] (14.5,5.5) circle (0.15cm);
\filldraw[fill=black] (11,4) circle (0.15cm);
\filldraw[fill=black] (13,4) circle (0.15cm);
\filldraw[fill=black] (15,4) circle (0.15cm);
\filldraw[fill=white] (12.5,3.5) circle (0.15cm);
\filldraw[fill=white] (14.5,3.5) circle (0.15cm);

\end{tikzpicture}
\caption{$\Web_{k,n}$ for $k=3$ and $n=6$.  If $w$ is the path on the right-hand side, then
	$\wt(w) = x_{25}x_{24}x_{36} x_{35}x_{34}$ and 
	$\Wt(w) = X_{25}+X_{24}+X_{36} +X_{35}+X_{34}.$}
   \label{web36}
 \end{figure}

Applying \cref{network_param} to the graph 
from \cref{web36}, we have the following result.
\begin{theorem}
	Label the faces of $G_0:= Web_{k,n}$ by indices $\mu$ and let $\mathcal{P}_{k,n}$ denote
	the collection of indices.
	Define the weight $\wt(w)$ of a path $w$ in $Web_{k,n}$ to 
	be the product of parameters $x_{\mu}$ where $\mu$ ranges over all
	face labels to the left of a path.  Define the weight of a \emph{flow} (i.e. a collection of 
	nonintersecting paths) to be the product of the weights of its paths.
	Let $p_J^{G_0} = \sum_F \wt(F)$ where $F$ ranges over all flows from $\{1,2,\dots,k\}$ to 
	$J$.  Then the map $\Phi:=\Phi_{G_0}$ sending 
	$(x_{\mu})_{\mu\in \mathcal{P}_{k,n}} 
	\in (\R_{>0})^{k(n-k)}$ to the collection of \emph{flow polynomials}
	$\{p_J^{G_0}\}_{J\in {[n]\choose k}}$ is a 
	 homemorphism from 
	$(\R_{>0})^{k(n-k)}$ to 
	the totally positive Grassmannian $(Gr_{k,n})_{>0}$ (realized in its Pl\"ucker 
	embedding).
\end{theorem}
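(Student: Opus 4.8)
The plan is to obtain this statement as an immediate specialization of the general network parametrization theorem \cref{network_param}. Once we know that $G_0 := \Web_{k,n}$ is a reduced plabic graph of type $(k,n)$ whose decorated permutation is that of the top positroid cell $(Gr_{k,n})_{>0}$, \cref{network_param} supplies for free the description of $\Phi_{G_0}$ via flow polynomials, the identification $p_J^{G_0} = p_J$, the homeomorphism property, and the fact that the number of face parameters $|\mathcal{P}_{k,n}|$ equals $k(n-k)$. So the only things to verify by hand concern the graph $\Web_{k,n}$ itself.

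First I would check that $\Web_{k,n}$, as drawn in \cref{web36}, is a plabic graph of type $(k,n)$: it is a connected planar graph properly embedded in a disk, with $n$ boundary vertices $1,\dots,n$ read off the boundary in order, with every interior vertex trivalent and colored black or white, and bipartite after the usual edge-contractions. This is visible from the figure. Next I would compute its decorated permutation using the ``rules of the road'' --- follow a directed trip turning maximally right at each black vertex and maximally left at each white vertex --- and check that a trip entering at boundary vertex $i$ exits at boundary vertex $i+k \pmod n$. Hence the decorated permutation of $\Web_{k,n}$ is $i \mapsto i+k \pmod n$ (it has exactly $k$ anti-excedances), which is precisely the decorated permutation labelling the top cell $(Gr_{k,n})_{>0}$. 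Finally I would confirm that $\Web_{k,n}$ is \emph{reduced}: the quickest check is that the faces of $\Web_{k,n}$ are exactly the $k(n-k)$ bounded faces carrying the labels $X_{ij}$ with $i\in[k]$ and $j\in\{k+1,\dots,n\}$, together with a single unbounded face, so $\Web_{k,n}$ has $k(n-k)+1 = \dim(Gr_{k,n})_{>0} + 1$ faces, the minimum possible for its decorated permutation, which forces reducedness. (Alternatively, one may cite \cite[Section 3]{troppos}, where $\Web_{k,n}$ is introduced as a reduced plabic graph for the top cell.)

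With these facts in place, \cref{network_param} applies verbatim to $G_0 = \Web_{k,n}$: taking the $k(n-k)$ face variables indexed by $\mathcal{P}_{k,n}$ (equivalently, normalizing the remaining, unbounded, face variable to $1$), the flow-polynomial map $\Phi = \Phi_{G_0}$ is a homeomorphism from $(\R_{>0})^{k(n-k)}$ onto the top cell $(Gr_{k,n})_{>0}$ in its Pl\"ucker embedding, and $p_J^{G_0}$ equals the Pl\"ucker coordinate $p_J$ by the Lindstr\"om--Gessel--Viennot interpretation of boundary measurements built into \cref{network_param}. I expect the only genuine work --- routine but worth doing carefully --- to be the trip computation and the reducedness check above, i.e.\ verifying that $\Web_{k,n}$ really represents all of $(Gr_{k,n})_{>0}$ rather than a smaller positroid cell; everything after that is formal.
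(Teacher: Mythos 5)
Your proposal is correct and takes the same route as the paper: the paper simply prefaces the statement with ``Applying \cref{network_param} to the graph from \cref{web36},'' i.e.\ it treats the theorem as an immediate instance of the general flow-polynomial parametrization for reduced plabic graphs once $\Web_{k,n}$ is recognized as a reduced plabic graph for the top cell. You have merely made explicit the routine verifications (type, trip permutation $i \mapsto i+k$, reducedness/face count) that the paper leaves implicit.
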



In the case of the graph $G_0 = \Web_{k,n}$, we obtain the following parameterization of 
$\Trop^+ Gr_{k,n}$. 

\begin{theorem}
	Label the faces of $Web_{k,n}$ by indices $\mu$ as before.  
	Define the weight $\Wt(w)$ of a path $w$ in $Web_{k,n}$ to 
	be the sum of parameters $X_{\mu}$ where $\mu$ ranges over all
	face labels to the left of a path.  Define the weight of a \emph{flow} (i.e. a collection of 
	nonintersecting paths) to be sum of the weights of its paths.
	Let $P_J^{G_0} = \min_F \Wt(F)$ where $F$ ranges over all flows from $\{1,2,\dots,k\}$ to 
	$J$.  Then the map $\Trop \Phi_G= \Trop \Phi_{G_0}$ sending 
	$(X_{\mu})_{\mu\in \mathcal{P}_{k,n}} 
	\in (\R)^{k(n-k)}$ to the collection of \emph{tropical flow polynomials}
	$\{P_J^{G_0}\}_{J\in {[n]\choose k}}$  
	is a bijection from 
	$\R^{k(n-k)}$ to 
	the tropical positive Grassmannian $\Trop^+ Gr_{k,n}$ (realized in its Pl\"ucker 
	embedding).
\end{theorem}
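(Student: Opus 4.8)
The plan is to deduce the theorem from the preceding theorem (that $\Phi_{G_0}$ is a homeomorphism from $(\R_{>0})^{k(n-k)}$ onto $(Gr_{k,n})_{>0}$) together with the tropicalization principle recalled in \cref{tropicalparam}. There are two things to check: that $\Trop\Phi_{G_0}$ maps $\R^{k(n-k)}$ \emph{onto} $\Trop^+Gr_{k,n}$, and that it does so \emph{injectively}.

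Surjectivity, together with the containment $\Trop\Phi_{G_0}(\R^{k(n-k)})\subseteq\Trop^+Gr_{k,n}$, is a direct application of \cite[Proposition~2.5]{troppos} as recalled in \cref{tropicalparam}: each flow polynomial $p_J^{G_0}$ is a subtraction-free (indeed, coefficient-one) polynomial in the face parameters $x_\mu$, and by the preceding theorem the map $\Phi_{G_0}$ they define surjects onto $(Gr_{k,n})_{>0}$; hence its tropicalization $\Trop\Phi_{G_0}$ — which is exactly the map sending $(X_\mu)$ to $(P_J^{G_0})$ — has image $\Trop^+Gr_{k,n}$. So it remains only to prove injectivity.

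For injectivity, the key point is that the homeomorphism $\Phi_{G_0}\colon(\R_{>0})^{k(n-k)}\to(Gr_{k,n})_{>0}$ has a \emph{subtraction-free} inverse, so that tropicalization applies to it as well. For a grid-like graph such as $G_0=\Web_{k,n}$ one can make this concrete: there is a collection of $k(n-k)$ subsets $J(\mu)\in\binom{[n]}{k}$, one for each face $\mu$, for which the flow from $\{1,\dots,k\}$ to $J(\mu)$ is forced, so that $p_{J(\mu)}^{G_0}=\prod_\nu x_\nu^{a_{\mu\nu}}$ is a monomial, and the exponent matrix $(a_{\mu\nu})$ is triangular with $1$'s on the diagonal for the natural partial order on the faces of the $k\times(n-k)$ rectangle, hence invertible over $\Z$. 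Granting this, $(x_\mu)\mapsto(p_{J(\mu)}^{G_0})$ is an invertible monomial map; tropicalizing turns it into an invertible \emph{linear} self-map of $\R^{k(n-k)}$, which factors as $\Trop\Phi_{G_0}$ followed by the coordinate projection $(P_J)_J\mapsto(P_{J(\mu)})_\mu$. Therefore $\Trop\Phi_{G_0}$ is injective. (Equivalently: writing $\Psi$ for the subtraction-free inverse of $\Phi_{G_0}$, functoriality of tropicalization under subtraction-free maps gives $\Trop\Psi\circ\Trop\Phi_{G_0}=\Trop(\Psi\circ\Phi_{G_0})=\id$.) Combining the two parts, $\Trop\Phi_{G_0}$ is a bijection from $\R^{k(n-k)}$ onto $\Trop^+Gr_{k,n}$ — in fact a piecewise-linear homeomorphism, the coordinate projection above providing a continuous inverse.

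The step I expect to be the main obstacle is the injectivity input: pinning down, for the specific graph $\Web_{k,n}$, an honest family of $k(n-k)$ subsets $J(\mu)$ with monomial flow polynomials and checking that the exponent matrix is invertible. This amounts to bookkeeping which faces lie to the left of the forced paths in $\Web_{k,n}$ — a finite combinatorial verification rather than a deep one, but also the one ingredient not available off the shelf from \cite{troppos} or \cite{postnikov}. (Alternatively, one can avoid this computation altogether by citing the general fact that boundary-measurement/network parametrizations of positroid cells have subtraction-free inverses, e.g.\ via the twist map, which suffices to run the functoriality argument above.)
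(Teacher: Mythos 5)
Your proof is correct and follows essentially the same route as the paper: surjectivity comes from tropicalizing the positive parametrization $\Phi_{G_0}$ via \cite[Proposition 2.5]{troppos} as in \cref{tropicalparam}, and injectivity from the fact that $\Phi_{G_0}$ admits a subtraction-free (Laurent monomial) inverse whose tropicalization is linear. The paper makes the inverse explicit via the corectangle Pl\"ucker coordinates $\{p_{K(i,j)}\}$ (\cref{def:invert}) and cites \cite[Corollary 3.5]{troppos}, which is exactly the structural content --- a unipotent triangular monomial map --- that you describe.
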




In the case of $G_0 = \Web_{k,n}$, we can easily invert the maps $\Phi:=\Phi_{G_0}$ and $\Trop \Phi = \Trop \Phi_{G_0}$.
This was done in \cite{troppos}; we review the construction here.
First, given $i$ and $j$ labeling horizontal and vertical
wires of $\Web_{k,n}$ (i.e. $1 \leq i \leq k$ and $k+1 \leq j \leq n$),
let 
\[K(i,j):=\{1,2,\dots,i-1\} \cup\{i+j-k,i+j-k+1,\dots,j-1,j\}. \]
If $(i,j)$ does not correspond to a region of $\Web_{k,n}$,
set $K(i,j):=[k]$.

\begin{definition}\label{def:invert}
	Let $p = \{p_K\}_{K\in {[n]\choose k}}\in \R_{>0}^{[n]\choose k}$.
	Then for $i$ and $j$ labeling horizontal and vertical
	wires of $\Web_{k,n}$ (i.e. $1 \leq i \leq k$ and 
	$k+1 \leq j \leq n$), we define
	$$\Psi(p)_{(i,j)}:=\frac{p_{K(i,j)} p_{K(i+1,j-2)}  p_{K(i+2,j-1)}}{p_{K(i,j-1)}  p_{K(i+1,j)}p_{K(i+2,j-2)})}.$$

We likewise define the tropical version.
	Let $P = \{P_K\}_{K\in {[n]\choose k}}\in \R^{[n]\choose k}$.
\[ \Trop \Psi(P)_{(i,j)} = \left( P_{K(i,j)} + P_{K(i+1,j-2)} +  P_{K(i+2,j-1)} \right) - \left( P_{K(i,j-1)} + P_{K(i+1,j)} + P_{K(i+2,j-2)}) \right). \]
\end{definition}

\cref{def:invert} gives a way to label each face of $\Web_{k,n}$
by a (tropical) Laurent monomial in (tropical) Pl\"ucker coordinates.  
  This is shown in \cref{invweb36}.

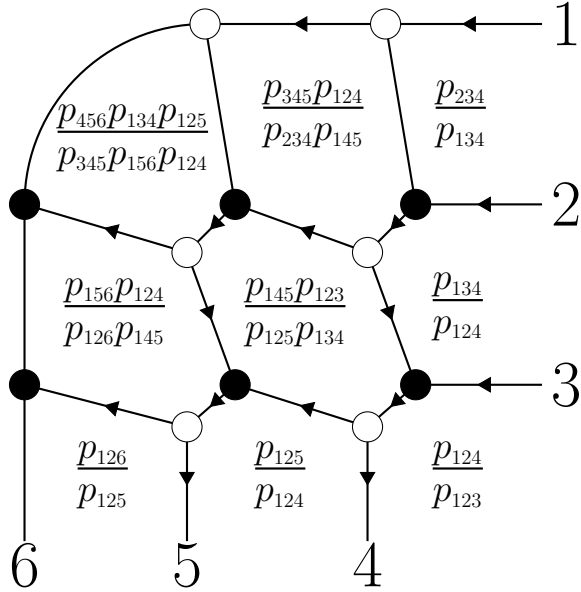
\begin{figure}[h]
	\begin{tikzpicture}[scale = 0.8]
\node at (2.8,5.1) {\LARGE $\frac{p_{\scaleto{456}{5pt}}p_{\scaleto{134}{5pt}}p_{\scaleto{125}{5pt}}}{p_{\scaleto{345}{5pt}}p_{\scaleto{156}{5pt}}p_{\scaleto{124}{5pt}}}$};
\node at (5.8,5.5) {\LARGE $\frac{p_{\scaleto{345}{5pt}}p_{\scaleto{124}{5pt}}}{p_{\scaleto{234}{5pt}}p_{\scaleto{145}{5pt}}}$};
\node at (2.5,2.2) {\LARGE $\frac{p_{\scaleto{156}{5pt}}p_{\scaleto{124}{5pt}}}{p_{\scaleto{126}{5pt}}p_{\scaleto{145}{5pt}}}$};
\node at (5.5,2.2) {\LARGE $\frac{p_{\scaleto{145}{5pt}}p_{\scaleto{123}{5pt}}}{p_{\scaleto{125}{5pt}}p_{\scaleto{134}{5pt}}}$};
\node at (8.25,5.5) {\LARGE $\frac{p_{\scaleto{234}{5pt}}}{p_{\scaleto{134}{5pt}}}$};
\node at (8.2,2.3) {\LARGE $\frac{p_{\scaleto{134}{5pt}}}{p_{\scaleto{124}{5pt}}}$};
\node at (2.3,-0.5) {\LARGE $\frac{p_{\scaleto{126}{5pt}}}{p_{\scaleto{125}{5pt}}}$};
\node at (5.25,-0.5) {\LARGE $\frac{p_{\scaleto{125}{5pt}}}{p_{\scaleto{124}{5pt}}}$};
\node at (8.2,-0.5) {\LARGE $\frac{p_{\scaleto{124}{5pt}}}{p_{\scaleto{123}{5pt}}}$};
\node at (10,7) {\huge 1};
\node at (10,4) {\huge 2};
\node at (10,1) {\huge 3};
\node at (6.7,-2) {\huge 4};
\node at (3.7,-2) {\huge 5};
\node at (1,-2) {\huge 6};

\begin{scope}[thick, every node/.style={sloped,allow upside down}]
  \draw (4,7) arc (90:180:3);
  \draw (7,7)-- node {\midarrow} (4,7);
  \draw (9.6,7)-- node {\midarrow} (7,7);
  \draw (9.6,4)-- node {\midarrow} (7.5,4);
  \draw (9.6,1)-- node {\midarrow} (7.5,1);
  \draw (4,7)-- (4.5,4);
  \draw (7,7)-- (7.5,4);
  \draw (1,4)-- (1,1);
  \draw (1,1)-- (1,-1.6);
  \draw (3.7,0.3)-- node {\midarrow} (3.7,-1.6);
  \draw (6.7,0.3)-- node {\midarrow} (6.7,-1.6);
  \draw (3.7,3.2)-- node {\midarrow} (4.5,1);
  \draw (6.7,3.2)-- node {\midarrow} (7.5,1);
  \draw (4.5,4)-- node {\midarrow} (3.7,3.2);
  \draw (7.5,4)-- node {\midarrow} (6.7,3.2);
  \draw (4.5,1)-- node {\midarrow} (3.7,0.3);
  \draw (7.5,1)-- node {\midarrow} (6.7,0.3);
  \draw (3.7,3.2)-- node {\midarrow} (1,4);
  \draw (6.7,3.2)-- node {\midarrow} (4.5,4);
  \draw (3.7,0.3)-- node {\midarrow} (1,1);
  \draw (6.7,0.3)-- node {\midarrow} (4.5,1);
\end{scope}

\filldraw[fill=white] (4,7) circle (0.25cm);
\filldraw[fill=white] (7,7) circle (0.25cm);
\filldraw[fill=black] (1,4) circle (0.25cm);
\filldraw[fill=black] (4.5,4) circle (0.25cm);
\filldraw[fill=black] (7.5,4) circle (0.25cm);
\filldraw[fill=white] (3.7,3.2) circle (0.25cm);
\filldraw[fill=white] (6.7,3.2) circle (0.25cm);
\filldraw[fill=black] (1,1) circle (0.25cm);
\filldraw[fill=black] (4.5,1) circle (0.25cm);
\filldraw[fill=black] (7.5,1) circle (0.25cm);
\filldraw[fill=white] (3.7,0.3) circle (0.25cm);
\filldraw[fill=white] (6.7,0.3) circle (0.25cm);

\end{tikzpicture}
\caption{Inverting the map}
   \label{invweb36}
 \end{figure}

\begin{proposition}
The maps $\Phi : \RR_{>0}^{k(n-k)} \to Gr^+_{k,n}$ and $\Psi : Gr^+_{k,n} \to \RR_{>0}^{k(n-k)}$ are inverses.
\end{proposition}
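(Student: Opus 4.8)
The plan is to bootstrap off the theorem recalled above, which already tells us that $\Phi$ is a homeomorphism from $\RR_{>0}^{k(n-k)}$ onto $Gr^+_{k,n}$, and in particular a bijection. So it suffices to verify just one of the two composites, say $\Psi\circ\Phi=\id_{\RR_{>0}^{k(n-k)}}$; once this is known, $\Psi$ must coincide with the (unique) inverse $\Phi^{-1}$, whence also $\Phi\circ\Psi=\id$.

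To check $\Psi\circ\Phi=\id$, fix $x=(x_\mu)_{\mu\in\mathcal P_{k,n}}\in\RR_{>0}^{k(n-k)}$ and write $p_J:=p_J^{G_0}(x)=\sum_F\wt(F)$, the sum over flows from $[k]$ to $J$ in $\Web_{k,n}$, with the convention $p_{[k]}:=1$ (the empty flow). By \cref{def:invert}, what must be shown is that for every region $(i,j)$ of $\Web_{k,n}$,
\[
\frac{p_{K(i,j)}\,p_{K(i+1,j-2)}\,p_{K(i+2,j-1)}}{p_{K(i,j-1)}\,p_{K(i+1,j)}\,p_{K(i+2,j-2)}}=x_{(i,j)}.
\]
The first and main step is a combinatorial lemma about $\Web_{k,n}$: for each region $(i,j)$ there is a \emph{unique} flow from $[k]$ to $K(i,j)=\{1,\dots,i-1\}\cup\{i+j-k,\dots,j\}$, namely the one that leaves wires $1,\dots,i-1$ untouched and reroutes wires $i,i+1,\dots,k$ across column $j$ in the only noncrossing way landing on the contiguous block $\{i+j-k,\dots,j\}$. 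Consequently each $p_{K(i,j)}$ is a single monomial $\prod_\mu x_\mu^{\,c_\mu(i,j)}$, whose exponent vector $c(i,j)=\bigl(c_\mu(i,j)\bigr)_{\mu\in\mathcal P_{k,n}}$ records, with multiplicity, which faces of $\Web_{k,n}$ lie to the left of a path of this ``staircase'' flow; here $c(i',j')=0$ whenever $K(i',j')=[k]$, i.e.\ in the degenerate cases where an index has fallen off the grid.

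Granting the lemma, the left-hand side of the displayed equation becomes the monomial $\prod_\mu x_\mu^{\,e_\mu}$, with
\[
e_\mu=c_\mu(i,j)+c_\mu(i+1,j-2)+c_\mu(i+2,j-1)-c_\mu(i,j-1)-c_\mu(i+1,j)-c_\mu(i+2,j-2),
\]
so the identity to be proved is precisely $e_\mu=\delta_{\mu,(i,j)}$ for every $\mu$. The six index pairs appearing here are arranged symmetrically about $(i+1,j-1)$ --- three carrying sign $+$ and their three antipodes carrying sign $-$ --- so this is a discrete second difference of the exponent function $(i,j)\mapsto c_\mu(i,j)$, and the required identity is a concrete finite verification; it is carried out in \cite{troppos}, and for $k=3$, $n=6$ the resulting monomials can simply be read off from \cref{invweb36}. (One may alternatively recognize $\Psi$ as computing the cluster-theoretic inverse of the boundary measurement map attached to $\Web_{k,n}$.) The main obstacle is the combinatorial lemma itself: describing $\Web_{k,n}$ and its perfect orientation carefully, proving the flow onto $K(i,j)$ is forced, and computing the exponent vectors $c(i,j)$. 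After that, the passage to exponents and the check $e_\mu=\delta_{\mu,(i,j)}$ --- including the boundary cases where some index set collapses to $[k]$ --- is a routine if somewhat tedious inclusion--exclusion.
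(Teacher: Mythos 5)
The paper does not prove this proposition but defers to \cite{troppos}; your outline is a correct reconstruction of that argument, reducing to $\Psi\circ\Phi=\id$ (legitimate since $\Phi$ is already a bijection) and isolating exactly the two combinatorial facts \cite{troppos} establishes: each $K(i,j)$ supports a unique flow in $\Web_{k,n}$, making $p_{K(i,j)}$ a single face-weight monomial, and the six-term exponent alternation collapses to $\delta_{\mu,(i,j)}$. Since you, like the paper, leave the flow and exponent bookkeeping to \cite{troppos}, this is essentially the same approach.
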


\begin{proposition}
\cite[Corollary 3.5 and its proof]{troppos}
The maps $\Trop \Phi : \RR^{k(n-k)} \to \Trop^+ Gr_{k,n}$ and $\Trop \Psi : \Trop^+ Gr_{k,n} \to \RR^{k(n-k)}$ are inverses.
\end{proposition}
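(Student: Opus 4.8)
\noindent\emph{Proof strategy.} The plan is to deduce the statement from the two results immediately preceding it: the (non-tropical) Proposition that $\Phi$ and $\Psi$ are mutually inverse, and the Theorem that $\Trop\Phi\colon \RR^{k(n-k)}\to\Trop^+Gr_{k,n}$ is a bijection. Given the latter, it suffices to establish the single identity $\Trop\Psi\circ\Trop\Phi=\id_{\RR^{k(n-k)}}$: once we know this, $\Trop\Psi$ restricted to $\Trop^+Gr_{k,n}=\im(\Trop\Phi)$ must coincide with $(\Trop\Phi)^{-1}$, and hence $\Trop\Phi\circ\Trop\Psi=\id$ on $\Trop^+Gr_{k,n}$ as well.

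To prove $\Trop\Psi\circ\Trop\Phi=\id$, fix a face $(i,j)$ of $\Web_{k,n}$ and write $p^{G_0}_K(x)$ for the flow polynomial attached to $K\in\binom{[n]}{k}$; this is a polynomial in the face variables $x_\mu$ whose coefficients are positive integers, since each flow contributes a monomial with coefficient $1$. By the preceding Proposition we have $\Psi(\Phi(x))_{(i,j)}=x_{(i,j)}$; clearing denominators and using that two rational functions agreeing on the positive orthant agree as polynomials, this becomes the polynomial identity
\[
p^{G_0}_{K(i,j)}\,p^{G_0}_{K(i+1,j-2)}\,p^{G_0}_{K(i+2,j-1)} \;=\; x_{(i,j)}\cdot p^{G_0}_{K(i,j-1)}\,p^{G_0}_{K(i+1,j)}\,p^{G_0}_{K(i+2,j-2)},
\]
with the convention $p^{G_0}_{[k]}=1$ for those entries where some index degenerates to $[k]$. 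Now tropicalize. The elementary fact we use is that for polynomials $f,g$ with nonnegative coefficients $\Trop(fg)=\Trop f+\Trop g$, because no cancellation occurs, so $\supp(fg)=\supp f+\supp g$ and $\min_{e\in\supp(fg)}\langle e,X\rangle=\min_{e\in\supp f}\langle e,X\rangle+\min_{e\in\supp g}\langle e,X\rangle$. Since $\Trop(p^{G_0}_K)=\min_F\Wt(F)=P^{G_0}_K$ by definition, applying $\Trop$ to the displayed identity and rearranging gives exactly $\Trop\Psi(\Trop\Phi(X))_{(i,j)}=X_{(i,j)}$, where the degenerate entries contribute $\Trop(1)=0$ on both sides. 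As $(i,j)$ was arbitrary, this proves $\Trop\Psi\circ\Trop\Phi=\id$.

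The only point that requires care --- the mild obstacle --- is checking that no cancellation is hidden in the bookkeeping: that $\Psi\circ\Phi$ really is an honest subtraction-free Laurent monomial in the flow polynomials $p^{G_0}_K$, so that its tropicalization is computed termwise from the $P^{G_0}_K$, and that the positive-orthant identity $\Psi\circ\Phi=\id$ upgrades to the polynomial identity above. Both hold because $\Phi$ is given by subtraction-free flow polynomials and $\Psi$ by a single Laurent monomial in the Pl\"ucker coordinates, so the composite involves no subtraction at all. Conceptually this is the same phenomenon used in \cref{tropicalparam} via \cite[Proposition~2.5]{troppos}: tropicalization is compatible with composition of subtraction-free rational maps and carries $\id$ to $\id$, which gives $\Trop\Psi\circ\Trop\Phi=\Trop(\Psi\circ\Phi)=\id$ directly; the one subtlety there, that this tropicalization is independent of the chosen subtraction-free presentation, is exactly what \cite{troppos} verifies for these network parametrizations.
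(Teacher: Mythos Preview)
The paper does not supply its own proof of this proposition; it simply cites \cite[Corollary~3.5 and its proof]{troppos}. Your argument is correct and is exactly the natural one: because $\Phi$ is given by subtraction-free flow polynomials and $\Psi$ by a Laurent monomial in Pl\"ucker coordinates, the identity $\Psi\circ\Phi=\id$ is a subtraction-free rational identity, and such identities tropicalize termwise (no cancellation in products, so $\Trop(fg)=\Trop f+\Trop g$). Combined with the already-stated bijectivity of $\Trop\Phi$, this gives both directions. This is precisely the mechanism behind \cite[Proposition~2.5]{troppos}, which the paper invokes in the remark on tropicalizing network parametrizations, so your proof is in the same spirit as the cited source.
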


\begin{lemma}\label{rem:corectangles}
The collection of Pl\"ucker coordinates 
	$\mathcal{C} = \{p_{K(i,j)} \ \vert \ 1 \leq i \leq k, k+1 \leq j \leq n\}$ 
form a \emph{cluster} for the 
\emph{cluster algebra structure} \cite{Scott}
on (the affine cone over the)
Grassmannian $Gr_{k,n}$.  
We call this the \emph{corectangles 
cluster}.  
In particular, 
this collection of Pl\"ucker coordinates is algebraically independent, and 
 all other Pl\"ucker coordinates 
can be written as Laurent polynomials with positive coefficients
in the Pl\"ucker coordinates from the collection.
\end{lemma}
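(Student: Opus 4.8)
The plan is to recognize $\mathcal C$ as the (extended) cluster of an explicit initial seed in Scott's cluster structure \cite{Scott} on the affine cone over $Gr_{k,n}$; once that is in hand, all three assertions follow from standard cluster machinery. The first and main step is this recognition. Using the standard bijection $I\mapsto\lambda(I)$ between $k$-subsets of $[n]$ and Young diagrams contained in the $k\times(n-k)$ box -- under which $[k]\mapsto\varnothing$ -- a short direct computation from the definition of $K(i,j)$ shows that $\lambda\bigl(K(i,j)\bigr)$ is the solid $(k+1-i)\times(j-k)$ rectangle; as $(i,j)$ runs over $1\le i\le k$, $k+1\le j\le n$, these are exactly all rectangular shapes in the box, so $\mathcal C\cup\{p_{[k]}\}$ is precisely the set of Pl\"ucker coordinates indexed by rectangles (the degenerate, i.e.\ full-width or full-height, rectangles being the cyclically consecutive ``frozen'' Pl\"uckers). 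This is, up to the usual cyclic-shift and transposition conventions, Scott's standard ``rectangles'' seed; equivalently it is the collection of face labels (in Postnikov's sense) of the plabic graph $\Web_{k,n}$, whose exchange quiver is the one encoded by the face-adjacencies recorded in \cref{def:invert} (the Laurent monomials appearing there are exactly the associated $\hat y$-coordinates).

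Granting that $\mathcal C\cup\{p_{[k]}\}$ is an extended cluster: algebraic independence is then immediate, since the cluster and frozen variables of a single seed are always algebraically independent over the ground field (they form a transcendence basis of the ambient field of fractions, of transcendence degree $\dim\widehat{Gr}_{k,n}=k(n-k)+1$), and a subset of an algebraically independent set is algebraically independent. Alternatively, and entirely within this section, algebraic independence can be read off from \cref{def:invert}: since $\Trop\Psi\circ\Trop\Phi=\mathrm{id}$ on $\R^{k(n-k)}$, the $k(n-k)$ coordinates $P_{K(i,j)}$ form a coordinate system on $\Trop^+Gr_{k,n}$, which forces the $p_{K(i,j)}$ to be algebraically independent on $Gr_{k,n}$.

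For the positivity statement I would give the self-contained argument supplied by this section, and note in passing that it also follows from general principles. Self-contained version: in the flow-polynomial parametrization $\Phi=\Phi_{G_0}$ with $G_0=\Web_{k,n}$, every $p_J=\sum_F\wt(F)$ is a sum, with positive integer coefficients, of monomials in the face parameters $x_\mu$; and by \cref{def:invert} each $x_\mu=\Psi(p)_\mu$ is a Laurent \emph{monomial} in $\mathcal C\cup\{p_{[k]}\}$ (with $p_{[k]}=1$ in this parametrization, since the trivial flow has weight $1$). Substituting, each $p_J$ becomes a sum, with positive coefficients, of Laurent monomials in $\mathcal C$ -- i.e.\ a Laurent polynomial with positive coefficients. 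The general-principles version: every Pl\"ucker coordinate is a cluster variable in this cluster structure \cite{Scott}, and the Laurent positivity theorem expresses any cluster variable as a Laurent polynomial with nonnegative coefficients in the variables of any fixed cluster.

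The only substantive external input is Scott's theorem that (the cone over) $Gr_{k,n}$ is a cluster algebra with the Pl\"ucker coordinates among its cluster variables. Modulo that, the sole real work -- and the step most prone to convention errors -- is the bookkeeping identifying the ``corectangle'' subsets $K(i,j)$ and the quiver implicit in \cref{def:invert} with one of Scott's explicitly described seeds, including confirming which single Pl\"ucker, namely $p_{[k]}$, plays the role of the absent frozen variable.
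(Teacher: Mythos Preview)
Your proposal is correct and reaches the same conclusion as the paper, but via a slightly different route.  The paper's proof is very brief: it observes (as you do) that under the subset--Young-diagram bijection the $K(i,j)$ are exactly the shapes whose \emph{complement} in the $k\times(n-k)$ box is a rectangle, and then simply invokes the characterization of Pl\"ucker clusters as maximal weakly separated collections \cite{OPS} to conclude that $\{K(i,j)\}$ is a cluster.  You instead match $\mathcal{C}$ directly against a specific, well-known seed (the rectangles seed of Scott), and you supplement this with two independent arguments for the ``in particular'' claims---one using general cluster-algebra facts (algebraic independence of cluster variables, Laurent positivity), and one entirely internal to this section via the flow parametrization $\Phi_{G_0}$ and its inverse $\Psi$.

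Both approaches are sound.  The paper's is shorter but imports the OPS result; yours is more self-contained and, in particular, your flow-polynomial argument for positive Laurent expressions is a nice alternative that does not rely on the (deep) Laurent positivity theorem.  One small convention point: where the paper speaks of Young diagrams with rectangular \emph{complement} (hence ``corectangles''), you use the convention $[k]\mapsto\varnothing$ under which $\lambda(K(i,j))$ is itself the $(k{+}1{-}i)\times(j{-}k)$ rectangle.  These are the same collection of subsets seen through dual conventions, so there is no discrepancy, but it would be worth flagging explicitly that your ``rectangles'' are the paper's ``corectangles.''
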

\begin{proof}
Note that for each $i$ and $j$ as above,
$K(i,j)$ is a $k$-element subset of $[n]$.  
Moreover, if we identify Young diagrams contained in a $k \times (n-k)$
rectangle with the labels of the vertical steps in the 
length-$n$ lattice path taking unit steps south and west
from $(k,n-k)$ to $(0,0)$, then the elements $K(i,j)$ precisely
correspond to the Young diagrams $\lambda$ whose complementary
Young diagram is a rectangle.  It is not hard to see that 
the collection $\{K(i,j)\}$ is a maximal \emph{weakly separated
set collection} \cite{OPS}, and hence 
form a \emph{cluster} for the 
\emph{cluster algebra structure} \cite{Scott}.
\end{proof}
\begin{example}\label{ex:invert}
\cref{invweb36} depicts the map $\Trop \Psi$.  
Since $\Trop \Phi$ and 
	$\Trop \Psi$ 
	are inverses, 
this example shows how to express each of the variables 
	$X_{ij}$ (as shown in 
	\cref{web36}) in terms of the tropical Pl\"ucker coordinates
$\mathcal{C} = \{P_{K(i,j)} \ \vert \ 1 \leq i \leq k, k+1 \leq j \leq n\}$. 
 Note moreover that 
if we choose a normalization in tropical projective space
(e.g. where $P_{123}=0$), then we can solve for the
	tropical Pl\"ucker coordinates in $\mathcal{C}$ 
	in terms of the $X_{ij}$'s.
For example, comparing 
\cref{web36} and \cref{invweb36},
we see that if $P_{123}=0$, then 
$P_{124}-P_{123} = P_{124} = X_{34}$, 
$P_{134}-P_{124} = X_{24}$, so 
$P_{134} = X_{34}+X_{24}$, etc.
In this example we see that from the collection 
$\{X_{ij}\}$ together with the normalization 
$P_{123}=0$, we can uniquely determine the 
Pl\"ucker coordinates 
$\{124, 125, 134, 145\} \cup 
\{123, 234, 345, 456, 156, 126\}$.
As in \cref{rem:corectangles},
this collection of Pl\"ucker coordinates is a \emph{cluster}
for the cluster algebras structure on the Grassmannian.
\end{example}

It is easy to generalize \cref{ex:invert}, obtaining the following result.
\begin{lemma}\label{lem:injective}
The map 
$\Trop \Psi$ sending 
$\mathcal{C} = \{P_{K(i,j)} \ \vert \ 1 \leq i \leq k, k+1 \leq j \leq n\}$
	(with the convention that $P_{12\dots k} = 0$)
to $\{\Trop \Psi(P)_{(i,j)}\}$ is an injective map from 
$\R^{k(n-k)}$ to $\R^{k(n-k)}$.
\end{lemma}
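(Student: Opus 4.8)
The plan is to show that $\Trop\Psi$, viewed as a map $\R^{k(n-k)} \to \R^{k(n-k)}$ on the coordinates indexed by $\mathcal C = \{K(i,j)\}$ (with the normalization $P_{12\dots k}=0$), is injective by exhibiting an explicit triangular inverse, generalizing the pattern observed in \cref{ex:invert}. First I would set up a linear order on the index set $\{(i,j) : 1 \le i \le k,\ k+1 \le j \le n\}$ — for instance, order lexicographically by $j$ and then by $i$, or equivalently order the corresponding ``corectangle'' Young diagrams by size — so that for each pair $(i,j)$ the three subtracted terms $P_{K(i,j-1)}, P_{K(i+1,j)}, P_{K(i+2,j-2)}$ appearing in the formula for $\Trop\Psi(P)_{(i,j)}$ are all strictly earlier in the order than $P_{K(i,j)}$ itself (with the convention that any $K(\cdot,\cdot)$ equal to $[k]$ contributes $0$, since $P_{12\dots k}=0$). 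Granting this, the defining equation
\[
\Trop\Psi(P)_{(i,j)} = \bigl(P_{K(i,j)} + P_{K(i+1,j-2)} + P_{K(i+2,j-1)}\bigr) - \bigl(P_{K(i,j-1)} + P_{K(i+1,j)} + P_{K(i+2,j-2)}\bigr)
\]
can be solved for $P_{K(i,j)}$ in terms of the value $\Trop\Psi(P)_{(i,j)}$ and strictly earlier tropical Pl\"ucker coordinates, so by induction on the order the entire vector $(P_K)_{K \in \mathcal C}$ is uniquely reconstructed from $\{\Trop\Psi(P)_{(i,j)}\}$. This yields injectivity immediately.

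The key steps, in order, are: (1) write down the explicit order and verify the claim that $K(i,j-1)$, $K(i+1,j)$, and $K(i+2,j-2)$ each precede $K(i,j)$ — this is a direct check on the definition of $K(i,j)$, noting that decreasing $j$ or increasing $i$ shrinks the associated corectangle Young diagram; (2) handle the boundary cases where one of the auxiliary indices falls outside the grid and is set to $[k]$, so that the corresponding $P$-value is $0$ and simply drops out; (3) run the induction to recover each $P_{K(i,j)}$; (4) conclude injectivity. I would present this as a short induction, possibly just invoking \cref{ex:invert} as the prototype and remarking that the same triangular structure holds in general — indeed \cref{rem:corectangles} already tells us $\mathcal C$ is algebraically independent and of the right cardinality $k(n-k)$, which is consistent with the map being a bijection onto its image.

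The main obstacle — really the only nontrivial point — is step (1): confirming that the partial order on corectangle subsets is genuinely compatible with the recursion, i.e. that none of the three ``denominator'' indices $K(i,j-1), K(i+1,j), K(i+2,j-2)$ can ever coincide with or come after $K(i,j)$ in the chosen order, and that the recursion bottoms out correctly at the normalized coordinate $P_{12\dots k}=0$. Once the combinatorics of the $K(i,j)$ is unwound (using the explicit description $K(i,j)=\{1,\dots,i-1\}\cup\{i+j-k,\dots,j\}$ and the Young-diagram dictionary from the proof of \cref{rem:corectangles}), this becomes a routine verification, and everything else is a formal triangular back-substitution. I do not expect any analytic or geometric difficulty here; the content is entirely bookkeeping about which Pl\"ucker indices appear in which position.
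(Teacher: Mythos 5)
Your approach is essentially the same as the paper's: the paper gives no explicit proof of \cref{lem:injective}, saying only that it ``is easy to generalize'' from \cref{ex:invert}, which carries out exactly the triangular back-substitution you describe in the special case $k=3$, $n=6$. So the plan of exhibiting a triangular inverse is the right one and matches the authors' intent.

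There is, however, a real slip in the description of the order, and it matters. You propose ordering ``lexicographically by $j$ and then by $i$, or equivalently order the corresponding corectangle Young diagrams by size,'' but these two orders are not equivalent, and the lexicographic one (increasing in both coordinates) fails to make the system triangular: the auxiliary index $K(i+1,j)$ has the same $j$ and larger $i$, so under that lex order it comes \emph{after} $K(i,j)$, and the term $P_{K(i+1,j)}$ is not yet known when you try to solve for $P_{K(i,j)}$. The order by rectangle size is the one that works. Concretely, $K(i,j)$ corresponds to a $(k-i+1)\times(j-k)$ rectangle, and \emph{all five} auxiliary indices in the formula — $K(i+1,j-2)$, $K(i+2,j-1)$, $K(i,j-1)$, $K(i+1,j)$, $K(i+2,j-2)$, not only the three negative ones you single out in step (1) — have rectangles that are weakly smaller in both dimensions and strictly smaller in at least one, hence of strictly smaller area (or they fall outside the grid and are set to $[k]$, the normalized base case $P_{12\dots k}=0$). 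Ordering by area with any tie-break, or equivalently ordering by $j$ increasing and then $i$ \emph{decreasing} (which is the order \cref{ex:invert} actually uses: $(3,4),(2,4),(1,4),(3,5),\dots$), gives a genuine triangular system and the induction goes through. With that correction, the rest of your argument is sound.
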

%
%
%
%
%
%
%
\begin{proof}[Proof of \cref{thm:1}]
To prove \cref{thm:1}, we must show that every point in the positive Dressian also lies 
in the positive tropical Grassmannian.
We will consider any tropical Pl\"ucker vector $P=\{P_K\}_{K\in {[n]\choose k}} \in \Dr^+_{k,n}$, with the normalization $P_{12\dots k} = 0$,
and compute $Q:= (\Trop \Phi) \circ (\Trop \Psi)(P)$. 
	This will give an (a priori new) realizable tropical Pl\"ucker vector in $\Trop^+ Gr_{k,n}$. 
We must show that $Q= P$. 

Recall that the map $\Trop \Psi$ depends only on the tropical Pl\"ucker coordinates in 
$\mathcal{C} = \{P_{K(i,j)} \ \vert \ 1 \leq i \leq k, k+1 \leq j \leq n\}$, mapping them 
	to $\{\Trop \Psi(P)_{(i,j)}\}$.  Moreover from \cref{lem:injective}, 
	$\Trop \Psi$ is an injective map from $\R^{k(n-k)}$ to $\R^{k(n-k)}$.  Therefore,
	since $\Trop \Psi$ and $\Trop \Phi$ are inverses, 
	we have that  $Q_{K(i,j)} = P_{K(i,j)}$ for all $K(i,j)$ with $1 \leq i \leq k$ and $k+1 \leq j \leq n$.
But now 
from \cref{rem:corectangles}, the collection $\{P_{K(i,j)}\}$
is  a cluster for the cluster structure on 
	$Gr_{k,n}$.  And by 
	\cite{OhSpeyer}, all Pl\"ucker clusters can be obtained
	from each via three-term Pl\"ucker relations.  Since every 
	Pl\"ucker coordinate lies in a Pl\"ucker cluster \cite{OPS}, 
	 the three-term Pl\"ucker relations alone (which we 
	 know are satisfied since $P\in \Dr^+_{k,n}$) 
	 determine all the other values 
$P_K$ and $Q_K$ for $K \in {[n] \choose k}$, so we must have $P_K = Q_K$ for all 
	$K \in {[n]\choose k}$.
Therefore $P=K$ and we are done. 
\end{proof}

\begin{remark}
One may generalize \cref{thm:1} and its proof to any positroid cell, using 
the $\Le$-network associated to a positroid cell and the inverse map from 
	\cite{Talaska2}.
\end{remark}

\section{The positive tropical Grassmannian and positroidal subdivisions}\label{sec:DP}

Recall that $\Delta_{k,n}$ denotes the $(k,n)$-hypersimplex, defined as the 
convex hull of the points $e_I$ where $I$ runs over $\binom{[n]}{k}$.
Consider a real-valued function $\{I\} \mapsto P_I$ on the vertices
of $\Delta_{k,n}$.  We define a polyhedral subdivision $\mathcal{D}_P$
of $\Delta_{k,n}$ as follows: consider the points
$(e_I, P_I)\in \Delta_{k,n} \times \R$ and take their convex hull. 
Take the lower faces (those whose outwards normal vector have last component
negative) and project them back down to $\Delta_{k,n}$; this gives us 
the subdivision $\mathcal{D}_P$.  We will omit the subscript $P$ when it is clear from context.  A subdivision obtained in this manner is called \emph{regular}.  

\begin{remark}\label{rem:lower}
A lower face $F$ of the regular subdivision defined above is determined by some 
	vector $\lambda = (\lambda_1,\dots,\lambda_n,-1)$ whose dot product with 
	the vertices of the $F$ is maximized.
	So if $F$ is the matroid polytope of a matroid $M$ with bases $\mathcal{B}$, 
	this is equivalent to saying that 
	$\lambda_{i_1} + \dots + \lambda_{i_k}-P_I = \lambda_{j_1} + \dots + \lambda_{j_k}-P_J >
	\lambda_{h_1} + \dots + \lambda_{h_k} - P_H$ for any two bases $I,J \in \mathcal{B}$
	and $H\notin \mathcal{B}$.
\end{remark}

Given a subpolytope $\Gamma$ of $\Delta_{k,n}$, we say that $\Gamma$ is \emph{matroidal}
if the vertices of $\Gamma$, considered as elements of $\binom{[n]}{k}$, are the 
bases of a matroid $M$, i.e. $\Gamma = \Gamma_M$.

The following result is originally due to Kapranov \cite{Kapranov}; it was 
also 
proved in \cite[Proposition 2.2]{Speyer}.
\begin{theorem}\label{prop:K}
	The following are equivalent.
	\begin{itemize}
		\item The collection $\{P_I\}_{I \in \binom{[n]}{k}}$ 
			is a tropical Pl\"ucker vector.
		\item The one-skeleta of $\mathcal{D}_P$ and $\Delta_{k,n}$
			are the same.
		\item Every face of $\mathcal{D}_P$ is matroidal.
	\end{itemize}
\end{theorem}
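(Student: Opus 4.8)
\textbf{Proof plan for \cref{prop:K}.}
The plan is to prove the cyclic chain of implications (3)$\Rightarrow$(2)$\Rightarrow$(1)$\Rightarrow$(3). The implication (3)$\Rightarrow$(2) is essentially immediate: if every face of $\mathcal{D}_P$ is matroidal, then in particular the edges of $\mathcal{D}_P$ are edges of matroid polytopes, which are always parallel to differences $e_i - e_j$ of standard basis vectors and hence are edges of $\Delta_{k,n}$ itself; conversely any edge of $\Delta_{k,n}$ that survives into $\mathcal{D}_P$ is visibly an edge of $\mathcal{D}_P$, so the one-skeleta agree. (One should note $\mathcal{D}_P$ refines $\Delta_{k,n}$, so no new vertices appear.)

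For (1)$\Rightarrow$(3), I would argue locally: a polytope $\Gamma \subseteq \Delta_{k,n}$ whose vertex set $\mathcal{B} \subseteq \binom{[n]}{k}$ is \emph{not} the set of bases of a matroid must, by the basis-exchange characterization of matroids, contain two sets $I, J \in \mathcal{B}$ and an element $a \in I \setminus J$ such that for no $b \in J \setminus I$ is $(I \setminus a) \cup b$ in $\mathcal{B}$. Taking $I,J$ with $|I \setminus J|$ minimal among such failures, one extracts (after choosing $S \subseteq I \cap J$ appropriately) a four-element configuration realizing a three-term Plücker pattern on $\{Sab, Scd, Sac, Sbd, Sad, Sbc\}$ in which the edge forced by the tropical three-term relation to appear in $\mathcal{D}_P$ is not present in $\Gamma$ — contradicting that $\Gamma$ is a face of a subdivision all of whose faces we are trying to show are matroidal. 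The cleanest way to package this is: if $\{P_I\}$ is a tropical Plücker vector, then for every face $F$ of $\mathcal{D}_P$ and every $S, a, b, c, d$ as in \cref{def:3}, the restriction of $F$ to the octahedral slice $\conv\{e_{Sab}, e_{Scd}, e_{Sac}, e_{Sbd}, e_{Sad}, e_{Sbc}\}$ is one of the three matroidal subpolytopes dictated by which of the tropical three-term inequalities is tight — and a theorem of the first author (\cite[Proposition 2.2]{Speyer}, which I may invoke) says this local matroidal condition at every octahedron is equivalent to global matroidality of every face.

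The implication (2)$\Rightarrow$(1) is the one I expect to be the main obstacle. The idea is the contrapositive: suppose $\{P_I\}$ violates the tropical three-term relation for some $S,a,b,c,d$, meaning the minimum of the three quantities $P_{Sac}+P_{Sbd}$, $P_{Sab}+P_{Scd}$, $P_{Sad}+P_{Sbc}$ is attained uniquely. Then on the two-dimensional square $\conv\{e_{Sac}, e_{Sbd}, e_{Sab}, e_{Scd}\}$ (or the appropriate quadrilateral among the three pairings), the induced subdivision $\mathcal{D}_P$ introduces the diagonal connecting the two vertices whose $P$-values sum to that strict minimum — and this diagonal is a new edge, not an edge of $\Delta_{k,n}$, violating (2). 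The technical work is to verify that this diagonal genuinely appears as a one-dimensional face of $\mathcal{D}_P$: one uses \cref{rem:lower} to produce an explicit height vector $\lambda$ (supported on coordinates in $S \cup \{a,b,c,d\}$) selecting exactly that diagonal as a lower face, and checks it does not get swallowed by a larger lower face. Handling the restriction to a sub-slice of $\Delta_{k,n}$ carefully — i.e. that being a lower face of the restricted point configuration lifts to being a face of $\mathcal{D}_P$ — is the fiddly part, but it follows because the slice is itself a face of $\Delta_{k,n}$ and regular subdivisions restrict to faces.
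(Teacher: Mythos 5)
The paper itself does not prove this theorem; it simply cites Kapranov and \cite[Proposition 2.2]{Speyer}. So there is no in-text proof to compare against, and your plan must stand on its own.

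Your (3)$\Rightarrow$(2) and (2)$\Rightarrow$(1) steps are essentially sound. For (3)$\Rightarrow$(2): edges of a matroid polytope are parallel to roots $e_i-e_j$ (this is the basis-exchange axiom, geometrically), two vertices $e_I,e_J$ of $\Delta_{k,n}$ differing by a root satisfy $|I\cap J|=k-1$ and hence are already adjacent in $\Delta_{k,n}$, and the original edges of $\Delta_{k,n}$ automatically persist because a subdivision using only lattice points restricts trivially to the one-dimensional faces. For (2)$\Rightarrow$(1), your contrapositive is right: if a tropical three-term relation fails at $(S;a,b,c,d)$, the octahedral face $\{x_i=1\ (i\in S),\ x_i=0\ (i\notin S\cup\{a,b,c,d\})\}$ of $\Delta_{k,n}$ inherits the regular subdivision induced by the restricted heights, whose lower hull contains the diagonal through the unique minimizing pair; and, as you correctly observe, a face of the restricted subdivision is a face of $\mathcal{D}_P$, giving a new edge.

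The gap is in (1)$\Rightarrow$(3), which is exactly where the content of the theorem lives. Your appeal to ``a theorem of the first author (\cite[Proposition 2.2]{Speyer})'' is circular: that proposition \emph{is} the statement being proved, not a separate local-to-global lemma you may assume. Your sketch via a minimal basis-exchange failure is the right instinct, and it does close cleanly when $|I\setminus J|=2$: if $I=Sab$ and $J=Scd$ both lie on the face of $\mathcal{D}_P$ selected by some $\lambda$ (so $\lambda\cdot e_{Sab}-P_{Sab}=\lambda\cdot e_{Scd}-P_{Scd}$ attain the max), while $Sbc$ and $Sbd$ are both absent, then since $e_{Sab}+e_{Scd}=e_{Sac}+e_{Sbd}=e_{Sad}+e_{Sbc}$ one finds $P_{Sab}+P_{Scd}$ strictly smaller than both $P_{Sac}+P_{Sbd}$ and $P_{Sad}+P_{Sbc}$, contradicting the tropical relation. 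But the reduction from $|I\setminus J|=m>2$ to $m=2$ is precisely the nontrivial part: a minimal failure need not sit inside a single octahedral slice, and you would have to chain several three-term relations, or instead prove (1)$\Rightarrow$(2) directly and then pass to (3) via the Gelfand--Goresky--MacPherson--Serganova characterization of matroid polytopes by edge directions. Either way, the passage beyond the octahedron is what remains open in your proposal and is not addressed by the current sketch.
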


Given a subpolytope $\Gamma$ of $\Delta_{k,n}$, we say that $\Gamma$ is \emph{positroidal}
if the vertices of $\Gamma$, considered as elements of $\binom{[n]}{k}$, are the 
bases of a positroid $M$, i.e. $\Gamma = \Gamma_M$.
The positroidal version of \cref{prop:K} was recently proved in 
\cite{LPW}, and independently in \cite{PosConfig}.

\begin{theorem}\label{prop:positroidal}
The following are equivalent.
\begin{itemize}
	\item The collection $\{P_I\}_{I \in \binom{[n]}{k}}$ 
		is a positive tropical Pl\"ucker vector.
	\item Every face of $\mathcal{D}_P$ is positroidal.
\end{itemize}
\end{theorem}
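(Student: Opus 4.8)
The plan is to prove both implications, using the already‑established \cref{thm:1} for one direction and \cref{prop:K} for the other, with the octahedral geometry of the hypersimplex as the bridge to the three‑term relations.

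\emph{Positive tropical Pl\"ucker vector $\Rightarrow$ all faces positroidal.} By \cref{thm:1}, a positive tropical Pl\"ucker vector $P$ lies in $\Trop^+ Gr_{k,n}$, so there is $A = A(t) \in Gr_{k,n}(\mathbb{K})$ whose Pl\"ucker coordinates all have positive leading coefficient and satisfy $\val(p_I(A)) = P_I$. Fix a facet $\Gamma_M$ of $\mathcal{D}_P$; by \cref{rem:lower} there is $\lambda \in \R^n$ for which $\sum_{j \in I}\lambda_j - P_I$ is maximized exactly on the bases $\mathcal{B}$ of $M$. Scaling the columns of $A$ by $t^{-\lambda_j}$ and dividing by the smallest power of $t$ occurring among the resulting Pl\"ucker coordinates, one obtains a matrix over $\mathbb{K}$ whose $I$-th Pl\"ucker coordinate has valuation $0$ for $I \in \mathcal{B}$ and strictly positive valuation otherwise; letting $t \to 0$ (after the row reductions needed to keep entries finite) yields a real matrix $A_0$ with $p_I(A_0) > 0$ for $I \in \mathcal{B}$ and $p_I(A_0) = 0$ otherwise. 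Since $\mathcal{B}\neq\emptyset$, $A_0$ has rank $k$, so it is totally nonnegative with matroid $M$, whence $M$ is a positroid. As faces of positroid polytopes are positroid polytopes, every face of $\mathcal{D}_P$ is positroidal.

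\emph{All faces positroidal $\Rightarrow$ positive tropical Pl\"ucker vector.} Since positroids are matroids, \cref{prop:K} shows $P$ is a tropical Pl\"ucker vector, so every tropical three-term relation of \cref{rem:tropPlucker} holds. Suppose for contradiction that for some $S \in \binom{[n]}{k-2}$ and $a<b<c<d$ disjoint from $S$ the positive relation fails, i.e.\ neither of the first two cases holds; since the tropical relation holds, this forces $P_{Sab}+P_{Scd} = P_{Sad}+P_{Sbc} < P_{Sac}+P_{Sbd}$. Let $F$ be the octahedral face of $\Delta_{k,n}$ cut out by $x_i = 1$ for $i \in S$ and $x_j = 0$ for $j \notin S \cup \{a,b,c,d\}$; it is affinely a copy of $\Delta_{2,4}$ on ground set $\{a,b,c,d\}$ with vertices $e_{Sab}, e_{Sac}, e_{Sad}, e_{Sbc}, e_{Sbd}, e_{Scd}$, and $\mathcal{D}_P$ restricted to $F$ is the regular subdivision of this octahedron induced by $P|_F$. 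The displayed equality says the equatorial square $\{e_{Sab}, e_{Sad}, e_{Sbc}, e_{Scd}\}$ lifts to a planar quadrilateral, while the strict inequality says $e_{Sac}$ and $e_{Sbd}$ lift strictly above it; hence $\mathcal{D}_P|_F$ is the subdivision of the octahedron into the two square pyramids with common base that square and apexes $Sac$ and $Sbd$.

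Each such pyramid is a cell of $\mathcal{D}_P|_F$, hence a face of a cell of $\mathcal{D}_P$ (as $F$ is a face of $\Delta_{k,n}$), hence a positroid polytope by hypothesis. But the pyramid with apex $Sac$ is the matroid polytope of the matroid on $[n]$ whose elements of $S$ are coloops, whose elements outside $S\cup\{a,b,c,d\}$ are loops, and whose restriction to $\{a,b,c,d\}$ is the rank-two matroid in which $b$ and $d$ are parallel (every pair is a basis except $\{b,d\}$). This is not a positroid: the parallel class $\{b,d\}$ is not a cyclic interval of $a<b<c<d$ (equivalently, a $2\times 4$ totally nonnegative matrix cannot have proportional second and fourth columns with all remaining $2\times 2$ minors nonzero). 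This contradiction finishes the argument.

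The main obstacle is not either implication in isolation but marshalling two inputs about positroid polytopes, each of which deserves a careful citation: that a face of a positroid polytope is again a positroid polytope (closure of positroids under deletion and contraction), and that the "crossing" square pyramid appearing above is not a positroid polytope. One should also be slightly careful in the degeneration of the first paragraph — justifying that the limit matrix $A_0$ exists and is genuinely of rank $k$ — but this is immediate once one notes that at least one maximal minor, $p_I(A_0)$ for $I \in \mathcal{B}$, is nonzero.
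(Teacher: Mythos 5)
The paper does not actually prove this statement; it records it as \cref{prop:positroidal} and cites \cite{LPW} and \cite{PosConfig} for the proof, so there is no in-paper argument to compare against. Your proof is correct and self-contained given the rest of the paper, and it uses the paper's toolkit in a natural way. For the forward direction you invoke \cref{thm:1} and then perform essentially the same column-rescaling and $t\to 0$ degeneration that appears in \cref{lem:realizable}, with the additional observation that positivity of leading coefficients over $\KK$ descends to nonnegativity of the Pl\"ucker coordinates of the limiting real matrix; since \cref{thm:1} is proved in Section~3 without reference to \cref{prop:positroidal}, there is no circularity. For the reverse direction you first apply \cref{prop:K}, then argue that a failed positive three-term relation $P_{Sab}+P_{Scd}=P_{Sad}+P_{Sbc}<P_{Sac}+P_{Sbd}$ forces the octahedral face on $S\cup\{a,b,c,d\}$ to be split along the square $\{Sab,Sad,Sbc,Scd\}$, producing a square-pyramid cell whose matroid has $b$ and $d$ parallel; contracting $S$ and restricting to $\{a,b,c,d\}$ (permissible since positroids are closed under minors in the induced cyclic order \cite{ARW}) yields a rank-two matroid with a noncyclic parallel pair, which the three-term Pl\"ucker relation rules out as a positroid. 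Both the identification of that pyramid as the transversal-type matroid you describe and the nonpositroidality check are correct, and the crucial appeal to the hypothesis — that cells of $\mathcal{D}_P|_F$ are faces of $\mathcal{D}_P$ for any face $F$ of $\Delta_{k,n}$ — is standard. The only small presentational point is that, rather than first treating facets and then citing closure of positroid polytopes under passing to faces, the degeneration argument in your first direction works verbatim for any face $\Gamma_M$ of $\mathcal{D}_P$ (just as in \cref{lem:realizable}), which would streamline the exposition.
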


It follows from \cref{prop:positroidal}
that the regular subdivisions of $\Delta_{k+1,n}$ consisting of positroid polytopes
are precisely those of the form $\mathcal{D}_P$, where $P=\{P_I\}$ is a positive tropical
Pl\"ucker vector. 

%
%
%
%
%
%
%

\section{A new proof that positively oriented matroids are realizable}\label{sec:realizable}

In 1987, da Silva \cite{daS} conjectured that every positively oriented matroid
is realizable.  Reformulating this statement in the language of Postnikov's 2006 
preprint~\cite{postnikov}, her conjecture says that every positively oriented matroid
is a positroid.  
In 2017, da Silva's conjecture was proved by Ardila, Rinc\`{o}n and the second author~\cite{ARW2}, 
using the combinatorics of positroid polytopes.  In this section we will 
give a new proof of the conjecture, using 
our \cref{thm:1}, which we think of as a ``tropical version" of da Silva's conjecture.

Recall that an oriented matroid of rank $k$ on $[n]$ can be specified by its 
\emph{chirotope}, which is a function from $[n]^k$ 
to $\{ -, 0, + \}$ obeying certain axioms~\cite{OrientedMatroidBook}.
If $M$ is a full rank $k \times n$ real matrix, the function taking $(i_1, i_2, \ldots, i_k)$ to the sign of the minor using columns $(i_1, i_2, \dots, i_k)$ is a chirotope, and the realizable oriented matroids are precisely the chirotopes occurring in this way.
Thus, if $M$ represents a point of the totally nonnegative Grassmannian, then $M$ gives a chirotope $\chi$ with $\chi(i_1, i_2, \ldots, i_k) \in \{ 0, + \}$ for $1 \leq i_1 < i_2 < \cdots < i_k \leq n$.

We define a \emph{positively oriented matroid} to be a chirotope $\chi : [n]^k \to \{ -,0, + \}$ 
such that  $\chi(i_1, i_2, \ldots, i_k) \in \{ 0, + \}$ for $1 \leq i_1 < i_2 < \cdots < i_k \leq n$.
Since every positroid gives rise to a positively oriented matroid, to prove da Silva's conjecture,
we need to verify that every positively oriented matroid comes from a positroid, or in other 
words, is realizable.

\begin{theorem}\cite[Theorem 5.1]{ARW2}\label{thm:realizable}
Let $M$ be a positively oriented matroid of rank $k$ on the ground set $[n]$. Then $M$ is realizable.
\end{theorem}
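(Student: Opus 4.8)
The plan is to deduce \cref{thm:realizable} from \cref{thm:1} by a tropical degeneration argument, exploiting the fact that a positively oriented matroid $M$ gives a point of the positive Dressian. First I would observe that the chirotope $\chi$ of $M$, restricted to increasing index sequences, takes values in $\{0,+\}$ and satisfies the three-term Grassmann--Plücker relations as an oriented matroid; I will translate these sign conditions into the statement that a suitable $\{0,+\}$-valued vector is a \emph{positive} tropical Plücker vector. Concretely, take a generic sufficiently-large real number $t$ and define $P_I := 0$ if $I$ is a basis of $M$ and $P_I := t$ (or $+\infty$, implemented as a large finite constant) if $I$ is a non-basis. The oriented-matroid axioms on $\chi$ for each three-term relation $p_{Sac}p_{Sbd} = p_{Sab}p_{Scd} + p_{Sad}p_{Sbc}$ force that among the six subsets involved, the pattern of bases/non-bases is compatible with one of the first two cases of the positive tropical three-term Plücker relation of \cref{rem:tropPlucker}: the key point is that a positively oriented matroid can never have $Sac, Sbd$ both bases while $Sab, Scd, Sad, Sbc$ are arranged so that the ``forbidden'' branch $P_{Sab}+P_{Scd} = P_{Sad}+P_{Sbc} < P_{Sac}+P_{Sbd}$ occurs — this would contradict the sign of the Grassmann--Plücker relation $\chi(Sac)\chi(Sbd)$ versus $\chi(Sab)\chi(Scd)$ and $\chi(Sad)\chi(Sbc)$ all being nonnegative. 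So $P \in \Dr^+_{k,n}$.

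Next, by \cref{thm:1}, $P \in \Dr^+_{k,n} = \Trop^+ Gr_{k,n}$, so there is a matrix $A(t) \in Gr_{k,n}(\K)$ with totally nonnegative (in fact the tropicalization is positive) Plücker coordinates whose valuations are the $P_I$. I would then specialize: the matroid of the leading-order term of $A(t)$ — i.e. the matroid recording which Plücker coordinates have valuation $0$ rather than positive valuation — is exactly $M$ by construction of $P$. This gives a real matrix (take $t$ a small positive real, or take the coefficient of the lowest-order term in each entry appropriately) whose support matroid is $M$ and whose maximal minors are all nonnegative, hence $M$ is a positroid. It remains to upgrade ``$M$ is a positroid'' to ``$M$ is realizable as an \emph{oriented} matroid with the prescribed chirotope $\chi$''. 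Here I would use that a totally nonnegative matrix $B$ realizing the positroid $M$ automatically has $\chi_B(i_1,\dots,i_k) \in \{0,+\}$ for increasing sequences, and since $M$ determines its chirotope on increasing sequences (bases $\mapsto +$, non-bases $\mapsto 0$), $\chi_B$ agrees with $\chi$ on increasing sequences; by the alternating property of chirotopes this forces $\chi_B = \chi$ everywhere. Hence $M$ is realizable.

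The step I expect to be the main obstacle is the translation in the first paragraph: verifying carefully that the $\{0,+\}$-valued height vector $P$ coming from a positively oriented matroid genuinely lands in $\Dr^+_{k,n}$, i.e. that for every three-term Plücker relation the positive tropical relation (one of the first two branches of \cref{rem:tropPlucker}) holds, never only the third branch. This is essentially a finite case analysis on the possible bases/non-bases patterns among $\{Sac, Sbd, Sab, Scd, Sad, Sbc\}$, but one must invoke the three-term chirotope axiom (the positivity of the Grassmann--Plücker sign pattern) exactly at the point where a naive matroid argument would allow the forbidden pattern. A secondary, more technical point is handling the ``$+\infty$'' heights cleanly — one should either work with a tropical Plücker vector valued in $\R \cup \{\infty\}$ and the corresponding notion of positive tropical Plücker vector, or first reduce to the case where $M$ is connected (so that, perturbing, all entries can be made finite) and then assemble the general case from connected components via \cref{prop:equiv} and \cref{prop:dim}, since direct sums of positroids are positroids and direct sums of realizable oriented matroids are realizable.
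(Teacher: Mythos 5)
Your high-level strategy matches the paper's: build a height vector $P$ on the vertices of $\Delta_{k,n}$ from $M$, show $P\in\Dr^+_{k,n}$, invoke \cref{thm:1} to get a realization over $\K$, and specialize (the paper packages the specialization step as \cref{lem:realizable}). Your final observation that a totally nonnegative realization of the underlying matroid automatically realizes the chirotope $\chi$ is correct and is implicit in the paper. However, the construction of $P$ in your first paragraph is wrong, and this is not a fixable technicality.

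The height vector $P_I = 0$ for bases and $P_I = t$ (one fixed large constant) for non-bases is, in general, \emph{not} in the Dressian at all. Take $n=4$, $k=2$, and let $M$ be the rank-$2$ matroid with $2$ and $4$ loops and $1,3$ coloops, so the unique basis is $\{1,3\}$. This is a positively oriented matroid (realized, e.g., by $\begin{smallmatrix}1&0&0&0\\0&0&1&0\end{smallmatrix}$). Your $P$ has $P_{13}=0$ and $P_{12}=P_{14}=P_{23}=P_{24}=P_{34}=t$, so $P_{13}+P_{24}=t$ while $P_{12}+P_{34}=P_{14}+P_{23}=2t$: the minimum over the three sums is achieved uniquely, so \emph{none} of the three branches of \cref{rem:tropPlucker} holds; $P\notin\Dr_{2,4}$. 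The failure mode you flagged as the ``main obstacle'' (only the forbidden third branch holding) is not the one that actually occurs; the issue is rather that a two-valued height function collapses non-bases of different coranks, and the tropical Pl\"ucker relation forces the heights to remember corank. Neither using $+\infty$ (which takes you outside the finite Dressian to which \cref{thm:1} as stated applies) nor reducing to connected components helps: the problematic minors $(M/S)|_{\{a,b,c,d\}}$ can be degenerate even when $M$ is not. The paper's construction is $P_I := -\rho_M(I)$, the negated rank function; by \cite[Proposition 4.4]{Speyer} this always lies in $\Dr_{k,n}$ with $\Gamma_M$ a face of $\mathcal{D}_P$, and for positively oriented $M$ one shows it lies in $\Dr^+_{k,n}$ by reducing each three-term relation to the rank-$2$ minor $M'=(M/S)|_{\{a,b,c,d\}}$ (using closure of positively oriented matroids under minors and duality) and checking the identity $\rho_{M'}(ac)+\rho_{M'}(bd)=\max\bigl(\rho_{M'}(ab)+\rho_{M'}(cd),\,\rho_{M'}(ad)+\rho_{M'}(bc)\bigr)$ directly on all positively oriented rank-$\le 2$ matroids on four elements. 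You should replace the first paragraph of your argument with this rank-function construction; the remaining two paragraphs of your proposal then go through essentially as in the paper.
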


Before proving \cref{thm:realizable}, we need the following lemma, which was implicit in 
\cite[Section 4]{Speyer}.
\begin{lemma}\label{lem:realizable}
	Suppose that $P = \{P_I\}$ lies in the tropical Grassmannian $\Trop Gr_{k,n}$.  Then every face of the 
	matroidal subdivision $\mathcal{D}_P$ of $\Delta_{k,n}$ corresponds to a realizable matroid.
\end{lemma}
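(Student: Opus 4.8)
The plan is to use the standard fact that the tropical Grassmannian $\Trop Gr_{k,n}$ parametrizes tropicalizations of linear spaces defined over the Puiseux series field $\K$: if $P = \{P_I\} \in \Trop Gr_{k,n}$, then there is a $k \times n$ matrix $A = A(t)$ with entries in $\K$ whose Plücker coordinates $p_I(A(t))$ satisfy $\val_{\K}(p_I(A(t))) = P_I$ for all $I \in \binom{[n]}{k}$ (with the convention that $p_I = 0$ is assigned valuation $+\infty$; since $P_I \in \R$ is finite here, in fact all $p_I(A(t))$ are nonzero). Fix a face $F$ of the subdivision $\mathcal{D}_P$; by \cref{prop:K} it is the matroid polytope $\Gamma_M$ of some matroid $M$ on $[n]$, with bases $\mathcal{B}$. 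I want to produce a realization of $M$ over a field.

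The key step: as in \cref{rem:lower}, the face $F$ is cut out by a linear functional, i.e.\ there is a vector $\lambda = (\lambda_1,\dots,\lambda_n) \in \R^n$ such that $\sum_{i \in I} \lambda_i - P_I$ is maximized exactly on $I \in \mathcal{B}$. First I would perturb $\lambda$ to a rational (hence, after clearing denominators, integer) vector $w \in \Z^n$ that still selects exactly the bases in $\mathcal{B}$ — this is possible since the maximizing set is stable under small perturbations and $\mathcal{B}$ is the maximizer for $\lambda$. Then I would rescale the columns of $A(t)$: set $A' = A(t) \cdot \diag(t^{-w_1}, \dots, t^{-w_n})$. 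This multiplies $p_I(A(t))$ by $t^{-\sum_{i\in I} w_i}$, so $\val_{\K}(p_I(A')) = P_I - \sum_{i\in I} w_i$, which (up to an overall additive constant) is minimized — i.e.\ the valuation is smallest — exactly on $I \in \mathcal{B}$. Normalizing so that the minimal valuation is $0$, we get $p_I(A') \in \O_{\K}$ (the valuation ring) for all $I$, with $p_I(A') $ a unit precisely when $I \in \mathcal{B}$. Now reduce modulo the maximal ideal: the residue field of $\K$ is $\C$ (or $\R$; in any case a field $\k$), and the reduction $\bar{A'}$ is a $k \times n$ matrix over $\k$ whose Plücker coordinates are $\overline{p_I(A')}$, which is nonzero iff $I \in \mathcal{B}$. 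Hence $\bar{A'}$ has rank $k$ and realizes the matroid $M$ over $\k$.

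The main obstacle, and the only genuinely delicate point, is verifying that the reduction step is valid — specifically that $p_I(\bar{A'}) = \overline{p_I(A')}$, i.e.\ that reduction mod the maximal ideal commutes with taking minors. This is immediate because the Plücker coordinate is a polynomial (with integer coefficients) in the matrix entries, and reduction $\O_{\K} \to \k$ is a ring homomorphism; the one thing to check is that after the rescaling and normalization all entries of $A'$ can be taken to lie in $\O_{\K}$, not merely all the maximal minors. This requires a small argument: one may first row-reduce $A(t)$ over $\K$ so that it is in reduced row echelon form with respect to a basis $B_0 \in \mathcal{B}$ (which exists since $\mathcal{B} \neq \emptyset$), in which form every entry is a ratio of Plücker coordinates $\pm p_J(A)/p_{B_0}(A)$; after the column rescaling by $w$ and the overall normalization, each such entry has nonnegative valuation because $P_J - \sum_{i \in J} w_i \geq P_{B_0} - \sum_{i\in B_0} w_i$ for all $J$, with equality on $\mathcal{B}$. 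So all entries of $A'$ lie in $\O_{\K}$ and the reduction is defined. This completes the proof, since we have exhibited a realization of $M$ over the field $\k$.
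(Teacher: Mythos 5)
Your proof is the same argument as the paper's, phrased slightly differently: the paper ``adds an affine linear function to $P$'' so that $P_I = 0$ exactly for $I \in \mathcal{B}$ (and $P_I > 0$ otherwise) and then ``sets $t = 0$,'' while you make the column-rescaling by $t^{-w_i}$ and the reduction modulo the maximal ideal of $\O_{\K}$ explicit. You are in fact more careful than the paper at one point: the paper does not explain why ``$t=0$'' can be substituted, i.e.\ why the matrix entries may be taken to lie in the valuation ring, and your row-reduction argument (put $A(t)$ in reduced row echelon form with respect to some $B_0 \in \mathcal{B}$, so entries become ratios $\pm p_J / p_{B_0}$) supplies exactly the missing detail.

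One step in your write-up is incorrect, however: the claim that $\lambda$ can be perturbed to a rational (hence integral) vector $w$ still selecting exactly $\mathcal{B}$, justified by ``the maximizing set is stable under small perturbations.'' That stability fails whenever the face $\Gamma_M$ has positive dimension, i.e.\ $|\mathcal{B}| > 1$: the defining equalities $\sum_{i \in I} \lambda_i - P_I = \sum_{i \in I'} \lambda_i - P_{I'}$ for $I, I' \in \mathcal{B}$ are destroyed by generic perturbation, and since the $P_I$ may be arbitrary real numbers, there need not exist any rational $w$ satisfying them. The fix is immediate: the field $\K$ used in this paper is the field of generalized Puiseux series with \emph{real} exponents, so $t^{w_i}$ is a perfectly good element of $\K$ for real $w_i$ and no rationalization step is needed (or wanted). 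With that correction your argument is correct and matches the paper's.
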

\begin{proof}
Let $\Gamma_M$ be a face of $\mathcal{D}_P$, and let $\mathcal{B}$ denote the 
bases of the matroid $M$.
Adding an affine linear function to $P$, we may assume that $P_I$ is $0$ for $I \in \mathcal{B}$; convexity then implies that $P_I > 0$ for $I \not\in \mathcal{B}$.

Since $P$ lies in the tropical Grassmannian, we can choose a $\K$-valued $k \times n$ matrix 
	$A = A(t)$ whose Pl\"ucker coordinates have valuations given by $P=\{P_I\}$ (see the discussion 
	following 
	\cref{def:tropGrass}).
	But now if we 
	set $t=0$, then the matrix $A(0)$ has Pl\"ucker coordinates which are nonzero for $I\in \mathcal{B}$
	and zero for $I\notin \mathcal{B}$.  Therefore  $M$ is a realizable matroid.
\end{proof}

\begin{proof}[Proof of \cref{thm:realizable}]
We first use the matroid $M$ to construct a point of the Dressian, 
following the method of~\cite[Proposition 4.4]{Speyer}
Namely, let $\rho_M$ be the rank function of the matroid $M$,
and for $I \in \binom{[n]}{k}$, set $P_I = - \rho_M(I)$. 
Then \cite[Proposition 4.4]{Speyer} implies that 
	 $P:=\{P_I\}_{I \in {[n] \choose k}}$ is a point of the Dressian,
	 and that the matroid polytope $\Gamma_M$ is a face of the subdivision
	 $\mathcal{D}_P$.

	 Using the fact that $M$ is positively oriented, we will show that 
	 $P$ is in fact a point of the positive Dressian.
Indeed, consider any $(k-2)$-element subset $S$ of $[n]$ and any $a<b<c<d$ in $[n] \setminus S$. 
We need to show that 
	\begin{equation*}\label{eq:1}
		P_{Sac}+P_{Sbd} = \min(P_{Sab}+P_{Scd}, P_{Sad}+P_{Sbc}), 
	\end{equation*}
	or equivalently, that 
	\begin{equation*}\label{eq:2}
		\rho_M(Sac)+\rho_M(Sbd) = \max(\rho_M(Sab)+\rho_M(Scd), \rho_M(Sad)+\rho_M(Sbc)). 
	\end{equation*}
	
	Let $M'$ be the matroid $(M/S)|_{\{ a,b,c,d \}}$ on the ground set 
	$\{ a,b,c,d \}$.  For $x$, $y\in \{a,b,c,d \}$, we have $\rho_M(Sxy) = \rho_M(S) + \rho_{M'}(xy)$. 
	Thus, we need to show that 
	\begin{equation}\label{eq:3}
		\rho_{M'}(ac) + \rho_{M'}(bd) = \max(\rho_{M'}(ab) + \rho_{M'}(cd), \rho_{M'}(bc)+\rho_{M'}(ad)). 
	\end{equation}

Now we claim that $M'$, being a minor of a positively oriented matroid,
is itself a positively oriented matroid.  It is easy to verify that the dual of a 
positively oriented matroid is again a positively oriented matroid, and moreover, 
\cite[Lemma 4.11]{ARW2}  showed
        that positively oriented matroids are closed under restriction.  
An analogous proof shows that positively oriented matroids are closed under contraction.
This verifies the claim.

It now remains to verify \eqref{eq:3} for 
all positively oriented matroids on four elements, which is routine.

We now know that $P=\{P_I\}$ lies in the positive Dressian, 
so \cref{thm:1} shows that $P$ is in the positive tropical Grassmannian.
	But now by \cref{lem:realizable}, this implies that 
every face of the matroidal subdivision $\mathcal{D}_P$ of $\Delta_{k,n}$ 
	corresponds to a realizable matroid. 
In particular, we have $P_I \leq 0$ with equality if and only if $I$ is a basis of $M$, so $\Gamma_M$ is a face of $\mathcal{D}_P$, and we have shown that $\Gamma_M$ is realizable.
\end{proof}

Interestingly, although the definitions
of ``positively oriented matroid" and ``positroid" don't involve tropical geometry at all, there does not seem to be a way to remove the tropical geometry from our proof 
without making it significantly longer.

\section{Finest positroidal subdivisions of the hypersimplex}\label{sec:finest}

In this section we show that finest positroidal subdivisions of 
the hypersimplex $\Delta_{k,n}$
achieve equality in the first author's \emph{$f$-vector theorem.}

\begin{definition}
A matroid is called \emph{series-parallel} if it can be obtained
by repeated series-parallel extensions from the matroid corresponding
to a generic point of $Gr_{1,2}$.
\end{definition}
See
\cite[Section 6.4]{White} for background on series-parallel matroids.

\begin{theorem}\label{thm:equality} \cite{Ktheory}
Let $P$ be a tropical Pl\"ucker vector arising as $\val(p_I(A))$ for 
some $A\in Gr_{k,n}(\K)$.  Then $\mathcal{D}_P$ has 
at most $\frac{(n-c-1)!}{(k-c)!(n-k-c)!(c-1)!}$ interior faces of 
dimension $n-c$, with equality if and only if all facets of 
$\mathcal{D}_P$ correspond to series-parallel matroids.
\end{theorem}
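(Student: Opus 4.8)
The plan is to prove \cref{thm:equality} by the $K$-theoretic method of \cite{Ktheory}: realize $\mathcal{D}_P$ as the dual complex of a flat degeneration of a linear space, convert the conservation of a $K$-class along the degeneration into an inclusion--exclusion identity among matroid classes, and extract the bound, with the $\beta$-invariant controlling equality. \textbf{The degeneration.} Since $P$ is realizable, fix $A = A(t) \in Gr_{k,n}(\K)$ with $\val(p_I(A)) = P_I$ and set $\Lambda = \rowspan(A) \in Gr_{k,n}(\K)$. Spreading $A$ out over the valuation ring $R \subset \K$ (clear denominators and row-reduce so that $A$ is defined and of full rank over $R$) gives a flat proper family $\mathcal{L} \to \Spec R$ whose generic fibre is $\Lambda$ and whose special fibre $\mathcal{L}_0$ is a reduced, Cohen--Macaulay union of projective toric varieties, with irreducible components and toric strata indexed by the interior faces of $\mathcal{D}_P$; the stratum indexed by an interior face $\Gamma_M$ is controlled by the matroid $M$, and $\dim\Gamma_M = n - c$ where $c$ is the number of connected components of $M$ (\cref{prop:dim}). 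Flatness and properness give $[\mathcal{O}_{\mathcal{L}_0}] = [\mathcal{O}_{\Lambda}]$ in $K$-theory --- equivalently, all twisted sheaf Euler characteristics are constant along the family --- and this conservation law is what the argument rests on.

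\textbf{From conservation to a bound.} To each matroid $N$ of rank $k$ on $[n]$ one attaches a class $[N] \in K_0(Gr_{k,n})$ (for realizable $N$ this is the structure-sheaf class of the closure of the corresponding torus orbit, and the construction extends to all matroids), and this assignment is \emph{valuative}, so the degeneration above yields the inclusion--exclusion identity
\[
[U_{k,n}] \;=\; \sum_{F}\, (-1)^{\,(n-1)-\dim F}\,[M_F],
\]
the sum over all interior faces $F = \Gamma_{M_F}$ of $\mathcal{D}_P$, with $U_{k,n}$ the uniform matroid (whose polytope is $\Delta_{k,n}$). The combinatorial-geometric heart of \cite{Ktheory} is then a coefficient-extraction functional on $K_0(Gr_{k,n})$, graded by the number of connected components: applied to $[N]$ with $N$ connected it is an evaluation of the Tutte polynomial; for $N$ with $c$ components it returns --- after matching signs and, formally, an induction on $c$ to isolate the faces of dimension exactly $n-c$ --- the product $\prod_i\beta(N_i)$ of the $\beta$-invariants of the components, a positive integer equal to $1$ exactly when $N$ is series-parallel, while on the uniform matroid it returns $\frac{(n-c-1)!}{(k-c)!\,(n-k-c)!\,(c-1)!}$. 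Reading the degree-$c$ part of the displayed identity therefore gives
\[
\frac{(n-c-1)!}{(k-c)!\,(n-k-c)!\,(c-1)!} \;=\; \sum_{F :\ \dim F = n-c}\ \prod_i \beta(M_{F,i}),
\]
a sum of positive integers each at least $1$; hence $\mathcal{D}_P$ has at most $\frac{(n-c-1)!}{(k-c)!\,(n-k-c)!\,(c-1)!}$ interior faces of dimension $n-c$ (for $c=1$ this is the count $\binom{n-2}{k-1}$ of facets).

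\textbf{The equality case.} Equality holds at level $c$ iff every interior face $F$ with $\dim F = n-c$ has $\prod_i\beta(M_{F,i}) = 1$. By Crapo's $\beta$-invariant and Brylawski's characterization, a matroid has all connected components of $\beta$-invariant $1$ if and only if it is series-parallel (see \cite[Section 6.4]{White}). Now faces of a matroid polytope correspond to flag minors, and series-parallel matroids form a minor-closed class, so: if every facet of $\mathcal{D}_P$ is series-parallel then so is every interior face and equality holds at all levels; conversely, a facet that is not series-parallel has a $U_{2,4}$ or $M(K_4)$ minor, which can be realized inside a flag minor of the appropriate length, producing, at each relevant dimension, an interior face whose matroid is not series-parallel and hence contributes more than $1$. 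Thus equality at any one level forces every facet of $\mathcal{D}_P$ to be a series-parallel matroid polytope, which is exactly the statement of \cref{thm:equality}.

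\textbf{The main obstacle.} The real content, and the hard part, is establishing the properties of the $K$-theoretic functional: constructing the matroid $K$-class and proving it valuative, and --- the crux --- showing that the relevant graded piece is nonnegative and equals the product of $\beta$-invariants, with the stated uniform-matroid value. This is where the geometry of the Grassmannian (its torus action, equivariant localization, the structure of the toric degeneration) enters essentially; one also has to check that $\mathcal{L}_0$ really is the reduced, transversal union of toric varieties indexed by $\mathcal{D}_P$, so that the inclusion--exclusion is exact in $K$-theory with no hidden higher $\mathrm{Tor}$ terms. Granting those inputs, everything else is bookkeeping of coefficients --- and it is precisely this $K$-theoretic input that makes it hard to remove algebraic geometry from the argument.
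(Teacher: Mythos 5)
This theorem is cited from \cite{Ktheory} and is not proved in the present paper, so there is no internal proof to compare against; what you have written is a reconstruction of the $K$-theoretic argument of that reference. Your sketch does capture its overall shape---a valuative matroid class in $K^0(Gr_{k,n})$, a polynomial invariant $g_M(t)$ whose coefficients are controlled by $\beta$-invariants of connected components, and Brylawski's $\beta=1$ characterization of series-parallel matroids---and you are candid that the positivity and valuativity of the functional, the technical heart of \cite{Ktheory}, are being taken as black boxes.

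Two concrete issues. First, the degeneration as you describe it cannot work: $\Lambda = \rowspan A$ is a single $\K$-point of $Gr_{k,n}$, so ``spreading $A$ out over $R$'' produces an $R$-point of $Gr_{k,n}$ whose special fiber is again one $k$-plane, not an $(n-1)$-dimensional union of toric varieties indexed by the interior faces of $\mathcal{D}_P$. The object that actually degenerates in \cite{Ktheory} is the torus orbit closure $\overline{T\cdot\Lambda}\subset Gr_{k,n}$, an $(n-1)$-dimensional toric subvariety whose moment polytope is $\Delta_{k,n}$; the $t$-family degenerates it to a reduced union of torus orbit closures whose moment polytopes are exactly the faces of $\mathcal{D}_P$, and it is $[\mathcal{O}_{\overline{T\cdot\Lambda}}]$ whose conservation gives the inclusion--exclusion identity. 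Second, ``reading off the degree-$c$ part'' of that identity does not by itself isolate the faces of dimension $n-c$: a face $F$ with $\dim F > n-c$ has a matroid with fewer than $c$ connected components, and the $t^c$-coefficient of $g_{M_F}(t)$ need not vanish, so such faces contribute to degree $c$ with signs $(-1)^{(n-1)-\dim F}$ that are not all equal. The parenthetical ``induction on $c$'' is doing real work here and needs to be spelled out---in \cite{Ktheory} this is handled by precise sign and vanishing statements for the coefficients of $g_M$, not by a one-line coefficient extraction. Granting the $K$-theoretic inputs and with those two corrections, your outline is a faithful account of the approach of \cite{Ktheory}.
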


In particular, the number of \emph{facets} of $\mathcal{D}_P$ -- 
that is, the number of matroid polytopes of dimension $n-1$ in $\mathcal{D}_P$ -- 
is at most ${n-2 \choose k-1}$.


The following result can be found in \cite[Corollary 11.2.15]{Oxley}.
\begin{theorem}
A connected matroid is series-parallel if and only if it has no
minor which is the uniform matroid $U_{2,4}$ or the graphical
matroid $M_{K_4}$ associated to the complete graph $K_4$.
\end{theorem}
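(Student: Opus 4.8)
The plan is to establish both implications of this excluded-minor characterization (in essence Brylawski's theorem), the forward one being routine and the reverse one carrying the weight.

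For the forward implication, I would first observe that neither $U_{2,4}$ nor $M_{K_4}$ is series-parallel: each is simple (no two elements are parallel) and cosimple (each is self-dual, so cosimplicity follows from simplicity) and has more than two elements, so neither can arise as a nontrivial series or parallel extension of a smaller matroid, and neither is the starting matroid $U_{1,2}$. Next I would record the standard fact that every connected minor of a series-parallel matroid is series-parallel, proved by induction on the length of a series/parallel construction sequence: a deletion or contraction of an element distinct from the last one added commutes with the final extension, while deleting or contracting the last element or its partner either recovers the previous matroid or is again a series/parallel extension. Together these give that any matroid with a $U_{2,4}$- or $M_{K_4}$-minor is not series-parallel.

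For the reverse implication, I would induct on $|E(M)|$, assuming $M$ connected with no $U_{2,4}$- and no $M_{K_4}$-minor. The small cases ($|E(M)|\le 3$) are the connected uniform matroids $U_{1,2},U_{1,3},U_{2,3}$ (together with trivial loop/coloop cases), all series-parallel. The inductive step rests on the following \emph{Key Lemma}: a connected matroid that is both simple and cosimple and has at least four elements has a $U_{2,4}$- or an $M_{K_4}$-minor. Granting this, if $M$ is connected, forbidden-minor-free, and $|E(M)|\ge 4$, then $M$ is not simple or not cosimple; in the first case $M$ has a parallel pair $\{e,f\}$, and then $M\backslash e$ is connected, still forbidden-minor-free, hence series-parallel by induction, so $M$, being the parallel extension of $M\backslash e$ by an element parallel to $f$, is series-parallel; the cosimple-failure case is dual, using that the dual of a series-parallel matroid is series-parallel.

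The main obstacle is the Key Lemma, and here is the plan for it. Let $M$ be connected, simple, cosimple, with $|E(M)|\ge 4$, and suppose it has no $U_{2,4}$- and no $M_{K_4}$-minor. Having no $U_{2,4}$-minor, $M$ is binary by Tutte's characterization. Now each of $F_7$, $F_7^*$, $M^*(K_5)$, $M^*(K_{3,3})$ has an $M_{K_4}$-minor -- for instance $F_7\backslash e\cong M_{K_4}$, and for the others one uses that $M_{K_4}$ is self-dual together with the evident $K_4$-minors of $K_5$ and $K_{3,3}$ -- so a binary matroid without an $M_{K_4}$-minor has none of these four as a minor, hence is graphic by Tutte's excluded-minor theorem for graphic matroids. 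Writing $M=M(G)$: simplicity of $M$ forces $G$ simple, connectedness of $M$ lets $G$ be taken connected (indeed $2$-connected), and cosimplicity of $M$ forbids coloops and $2$-element cocircuits, hence forbids bridges and degree-$2$ vertices of $G$, so $G$ has minimum degree at least $3$. But a simple graph of minimum degree at least $3$ has a $K_4$-minor (classically, by Dirac's theorem it even contains a subdivision of $K_4$), so $M(G)$ has an $M_{K_4}$-minor, a contradiction. (If $G$ had at most two vertices then $M$ would have rank $\le 1$ and, being simple, at most one element, contradicting $|E(M)|\ge 4$.) An alternative route to the Key Lemma that avoids the graphic-matroid machinery is to reduce the simple--cosimple connected case to the $3$-connected case via $2$-sum decomposition and then invoke Tutte's wheels-and-whirls theorem, noting that the rank-$2$ whirl is $U_{2,4}$ and the rank-$3$ wheel is $M_{K_4}$, so that the only $3$-connected binary matroid on six elements is $M_{K_4}$; but the graphic detour above is shorter to write down.
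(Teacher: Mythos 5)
The paper does not prove this statement: it simply cites \cite[Corollary 11.2.15]{Oxley} and moves on. So there is no paper proof to compare against, only the question of whether your proof is correct.

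Your proof is correct and follows what is essentially the standard route to Brylawski's characterization. The forward direction (series-parallel matroids are closed under connected minors, and $U_{2,4}$, $M(K_4)$ are simple, cosimple, and self-dual, hence not series-parallel) is routine, as you say. The reverse direction correctly reduces to the Key Lemma by stripping off a parallel or series element, using that $M\backslash e$ stays connected when $\{e,f\}$ is a parallel pair (if $M\backslash e$ split as a direct sum, $M$ would too). The Key Lemma itself is handled by the two excluded-minor theorems of Tutte: no $U_{2,4}$-minor gives binary; each of $F_7$, $F_7^*$, $M^*(K_5)$, $M^*(K_{3,3})$ has an $M(K_4)$-minor (via $F_7\backslash e\cong M(K_4)$, self-duality of $M(K_4)$, and the evident $K_4$-minors of $K_5$ and $K_{3,3}$), so binary with no $M(K_4)$-minor gives graphic; and a simple $2$-connected graph with no bridges and no degree-$2$ vertices has minimum degree $\ge 3$, hence a $K_4$-subdivision by Dirac. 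All the cited ingredients are genuine standard theorems and the logic linking them is sound.

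One small caveat worth flagging: the paper's definition of series-parallel builds up from $U_{1,2}$, under which the single-element matroids $U_{0,1}$ and $U_{1,1}$ are \emph{not} series-parallel, yet they are connected and certainly have no $U_{2,4}$- or $M(K_4)$-minor. So the theorem as literally stated carries an implicit assumption that $|E(M)|\ge 2$ (Oxley's own definition admits the one-element matroids as series-parallel, which removes the issue). Your base case waves at ``trivial loop/coloop cases,'' which is the right instinct; just be aware that under the paper's convention those cases are degenerate exceptions to the statement rather than instances of it.
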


The graphical matroid $M_{K_4}$ is not a positroid, and all minors
of positroids are positroids \cite{ARW}, so we have the following
corollary.

\begin{corollary}\label{cor:seriesparallel}
A connected positroid is series-parallel if and only if it has
	no uniform matroid $U_{2,4}$ as a minor.
\end{corollary}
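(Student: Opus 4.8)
The plan is to combine the two theorems stated immediately before the corollary.  From \cref{thm:equality} (taken from \cite{Ktheory}) we know that for $P$ arising from some $A \in Gr_{k,n}(\K)$, the subdivision $\mathcal{D}_P$ achieves equality in the $f$-vector bound precisely when all of its facets correspond to series-parallel matroids.  So the statement to prove is really just the algebraic fact: \emph{a connected matroid is series-parallel iff it has no $U_{2,4}$ and no $M_{K_4}$ minor}.  That is exactly \cite[Corollary 11.2.15]{Oxley}, quoted in the displayed theorem just above.  So the only work in the corollary itself is to remove $M_{K_4}$ from the list of forbidden minors in the positroid setting.

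First I would recall that all minors of positroids are positroids; this is the cited result \cite{ARW}.  Hence if $M$ is a connected positroid, every minor of $M$ is again a positroid.  Second, I would invoke the fact that $M_{K_4}$ is not a positroid — so a connected positroid can never have $M_{K_4}$ as a minor.  (If one wants to see why $M_{K_4}$ is not a positroid: it is not even realizable by a matrix with nonnegative maximal minors; equivalently, its matroid polytope is not a positroid polytope.  But for the proof we may simply take ``$M_{K_4}$ is not a positroid'' as a known fact, as the paper does.)

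Combining these: let $M$ be a connected positroid.  By the forbidden-minor characterization of series-parallel matroids, $M$ is series-parallel iff it has no minor isomorphic to $U_{2,4}$ or $M_{K_4}$.  Since $M$ is a positroid and all its minors are positroids, while $M_{K_4}$ is not a positroid, the condition ``$M$ has no $M_{K_4}$ minor'' is automatically satisfied.  Therefore, for a connected positroid $M$, being series-parallel is equivalent to having no $U_{2,4}$ minor, which is precisely the statement of \cref{cor:seriesparallel}.

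There is essentially no obstacle here — the corollary is a two-line deduction from results already invoked in the excerpt.  The only point needing a word of care is the logical direction: having ``no $U_{2,4}$ minor'' genuinely is equivalent, and not merely implied, because the $M_{K_4}$ clause in Oxley's characterization is vacuous for positroids; so dropping it changes nothing in the equivalence.
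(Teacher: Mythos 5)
Your proof is correct and is essentially the paper's own argument: cite Oxley's forbidden-minor characterization of connected series-parallel matroids (excluded minors $U_{2,4}$ and $M_{K_4}$), note that $M_{K_4}$ is not a positroid while all minors of a positroid are positroids \cite{ARW}, and conclude that the $M_{K_4}$ clause is vacuous for positroids. The opening sentence invoking \cref{thm:equality} is a false start with no bearing on the corollary, but you immediately drop it and the rest of the deduction matches the paper.
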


If $M$ is a matroid on the ground set $[n]$, with matroid
polytope $\Gamma_M$, and $I$ and $J$ are disjoint subsets of $[n]$, then the the polytope $\Gamma_{M \backslash I / J}$ is $\Gamma_M \cap \{ z_i =0 : i \in I \} \cap \{ z_j = 1 : j \in J \}$. So we can phrase Corollary~\ref{cor:seriesparallel} as
\begin{corollary}\label{cor:seriesparallelgeometric}
Let $M$ be a connected positroid.  Then $M$ is series-parallel
if and only if its matroid polytope
        $\Gamma_M$ does not contain any face which is an
        (unsubdivided) octahedron.
\end{corollary}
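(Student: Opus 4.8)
The plan is to deduce Corollary~\ref{cor:seriesparallelgeometric} from Corollary~\ref{cor:seriesparallel} purely by translating ``has no $U_{2,4}$ minor'' into polytope language. The bridge is the identity noted just before the statement: for disjoint $I,J\subseteq[n]$, the matroid polytope of the minor $M\backslash I/J$ equals $\Gamma_M\cap\{z_i=0:i\in I\}\cap\{z_j=1:j\in J\}$. So the two things I must check are: (i) every face of $\Gamma_M$ arises as such a slice $\Gamma_{M\backslash I/J}$ for suitable disjoint $I,J$; and (ii) the matroid polytope $\Gamma_{U_{2,4}}$ is exactly a (combinatorial) octahedron, and conversely a face of $\Gamma_M$ that is an unsubdivided octahedron forces a $U_{2,4}$ minor.

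First I would recall the standard description of faces of a matroid polytope: every face of $\Gamma_M$ is obtained by choosing a chain of flats, or equivalently, every face is of the form $\{z\in\Gamma_M:z_i=0\ (i\in I),\ z_j=1\ (j\in J)\}$ for some disjoint $I,J$, and this face is the matroid polytope of $M\backslash I/J$ (on ground set $[n]\setminus(I\cup J)$, up to the obvious coordinate projection). This is classical — it is essentially how matroid polytopes interact with the coordinate hyperplanes $z_i=0$ and $z_i=1$ — and I would cite it from the matroid-polytope literature (e.g. the same references already used for \cref{prop:dim}, or Ardila--Rinc\'on--Williams). Second, I would observe that $U_{2,4}$ has $\binom{4}{2}=6$ bases, all pairs from a $4$-element set, and that $\Gamma_{U_{2,4}}=\Delta_{2,4}$ is the regular octahedron; it is connected and has dimension $3$. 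Hence a face of $\Gamma_M$ that is an unsubdivided octahedron is a $3$-dimensional face with $6$ vertices forming all $2$-subsets of some $4$-element subset of the remaining ground set, i.e.\ it is $\Gamma_{M'}$ for a minor $M'$ with $M'\cong U_{2,4}$. Conversely, if $M$ has a $U_{2,4}$ minor $M\backslash I/J$, the corresponding slice of $\Gamma_M$ is $\Gamma_{U_{2,4}}$, an octahedral face.

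Putting these together: by Corollary~\ref{cor:seriesparallel}, a connected positroid $M$ is series-parallel iff it has no $U_{2,4}$ minor; by (i) and (ii) this is equivalent to $\Gamma_M$ having no octahedral face; and ``unsubdivided'' is automatic since we are speaking of a genuine face of the polytope $\Gamma_M$ (not of a subdivision of it), which is exactly the phrasing in the statement. I would spell out one subtle point: I should make sure the ``octahedron'' in the statement means a face that is affinely (equivalently combinatorially) an octahedron, and note that a $3$-dimensional matroid polytope with $6$ vertices inside a hypersimplex is forced to be $\Gamma_{U_{2,4}}$ — a $3$-polytope that is an octahedron has $6$ vertices, and the only rank-$2$ (or corank-$2$) matroid on an appropriate ground set whose polytope is $3$-dimensional with $6$ vertices all of the form $e_a+e_b$ is $U_{2,4}$.

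The main obstacle is really just bookkeeping: being careful that the face-of-$\Gamma_M$ $\leftrightarrow$ minor correspondence is stated and cited correctly (including the coordinate projection identifying $\Gamma_M\cap\{z_i=0\}\cap\{z_j=1\}$ with a matroid polytope on the smaller ground set), and confirming the elementary fact that an octahedral face inside a hypersimplex can only be $\Gamma_{U_{2,4}}$. Neither of these is deep; the content of the corollary is entirely carried by Corollary~\ref{cor:seriesparallel} together with the Oxley excluded-minor characterization and the fact that $M_{K_4}$ is not a positroid. So I would present the argument as: restate the minor--face dictionary, identify $\Gamma_{U_{2,4}}$ with the octahedron, and conclude by applying Corollary~\ref{cor:seriesparallel}.
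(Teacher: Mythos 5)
Your approach is the intended one: the paper gives no explicit proof and simply records the corollary as a geometric rephrasing of Corollary~\ref{cor:seriesparallel} via the dictionary $\Gamma_{M\backslash I/J}=\Gamma_M\cap\{z_i=0:i\in I\}\cap\{z_j=1:j\in J\}$, which is exactly what you do. One imprecision worth fixing: your step (i) asserts that \emph{every} face of $\Gamma_M$ is a coordinate slice $\Gamma_{M\backslash I/J}$. That is false in general (faces of a matroid polytope correspond to flags of subsets, and a flag of length $\ge 3$ typically produces a face that is not cut out by hyperplanes of the form $z_i=0$ or $z_j=1$; e.g.\ the square base of the pyramid $\Gamma_M$ for the rank-$2$ matroid on $\{1,2,3,4\}$ with $3\parallel 4$ is not such a slice). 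Fortunately you do not need (i) in that generality: your own observation that an octahedral face has six vertices which, after discarding the constant coordinates, are exactly all $e_{ij}$ with $\{i,j\}\in\binom{E'}{2}$ for a $4$-set $E'$ shows directly that this face equals $\Gamma_M\cap\{z_i=0:i\notin E'\cup J\}\cap\{z_j=1:j\in J\}$ with $J$ the common part of the bases, and that this slice is $\Gamma_{U_{2,4}}$; so octahedral faces in particular \emph{are} coordinate slices, and together with Corollary~\ref{cor:seriesparallel} the argument is complete. I would simply delete the general claim (i) and replace it with this direct analysis of an octahedral face.
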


It follows from \cref{prop:dim} 
that in a matroidal subdivision,
all facets correspond to connected matroids.  

\begin{theorem}\label{thm:octahedron}
Let $P = \{P_K\}_{K\in {[n] \choose k}}$ be a positive tropical Pl\"ucker vector.
For the positroidal subdivision $\mathcal{D}_P$ of $\Delta_{k,n}$, the 
following are equivalent:
	\begin{enumerate}
		\item $\mathcal{D}_P$ is a finest subdivision.
		\item Every facet of $\mathcal{D}_P$ is the matroid polytope of a series-parallel matroid.
		\item Every octahedron in $\mathcal{D}_P$ is subdivided.
	\end{enumerate}
\end{theorem}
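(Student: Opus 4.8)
The plan is to prove the cycle of implications $(1) \Rightarrow (3) \Rightarrow (2) \Rightarrow (1)$, using the combinatorial dictionary between positroidal subdivisions and positive tropical Pl\"ucker vectors (\cref{prop:positroidal}) together with \cref{cor:seriesparallelgeometric}, which translates ``series-parallel'' into ``no octahedral face.''

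First I would handle $(3) \Rightarrow (2)$, which is essentially a restatement. By \cref{prop:dim} and the remark following it, every facet $\Gamma_M$ of $\mathcal{D}_P$ corresponds to a connected matroid $M$. By \cref{cor:seriesparallelgeometric}, $M$ is series-parallel exactly when $\Gamma_M$ contains no face which is an unsubdivided octahedron. Now, a face of a facet $\Gamma_M$ of the subdivision $\mathcal{D}_P$ is itself a face of $\mathcal{D}_P$; if condition $(3)$ holds then no such face is an unsubdivided octahedron, so $M$ is series-parallel. This gives $(3) \Rightarrow (2)$.

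Next, $(2) \Rightarrow (1)$. Suppose every facet of $\mathcal{D}_P$ is a series-parallel matroid polytope but $\mathcal{D}_P$ is not finest; then some facet $\Gamma_M$ can be further subdivided into a positroidal subdivision (refinements of positroidal subdivisions by positive tropical Pl\"ucker vectors are again positroidal, by \cref{prop:positroidal} applied on the face). The key point is that a connected series-parallel matroid polytope admits no nontrivial matroidal subdivision at all: I would argue that a matroid polytope $\Gamma_M$ with $M$ connected of dimension $d$ is a simplex only when $M$ is series-parallel of a very restricted type, and more robustly, that the existence of a nontrivial matroidal subdivision of $\Gamma_M$ forces a three-term Pl\"ucker relation on $M$ to be ``bent,'' i.e. forces an octahedral (rank-$2$-on-$4$-elements) minor — contradicting series-parallelism via \cref{cor:seriesparallel}. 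Alternatively, and perhaps more cleanly, one invokes the $f$-vector theorem (\cref{thm:equality}): a series-parallel facet contributes exactly its own interior faces to the count, and a finest refinement would strictly increase the number of facets beyond what Speyer's bound allows once one knows (from the realizability coming via \cref{thm:1} and \cref{lem:realizable}) that $P$ is realizable. I expect this is the step to be careful about; the slick route is to show directly that series-parallel matroid polytopes are indecomposable.

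Finally, $(1) \Rightarrow (3)$: if $\mathcal{D}_P$ is finest but contains an unsubdivided octahedron $O$ as a face, then $O$ is the matroid polytope of $U_{2,4}$ (on some $4$ elements, contracted/restricted appropriately), which is the one positroidal octahedron; but the octahedron $\Gamma_{U_{2,4}}$ itself admits a nontrivial regular positroidal subdivision into two square pyramids (this is the single three-term Pl\"ucker degree of freedom, and both ways of splitting it correspond to positive tropical Pl\"ucker vectors — indeed this is exactly the $Gr_{2,4}$ computation in the \cref{ex:invert}-style example earlier). Refining $\mathcal{D}_P$ along this split inside the face containing $O$ produces a strictly finer positroidal subdivision, contradicting finestness. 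The main obstacle throughout is verifying that each local subdivision stays positroidal and regular; this is exactly where \cref{prop:positroidal} and the fact that positive tropical Pl\"ucker vectors are preserved under restriction to faces do the essential work, so no genuinely new input beyond \cref{thm:1}, \cref{prop:positroidal}, and \cref{cor:seriesparallelgeometric} should be needed.
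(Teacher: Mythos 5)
Your $(3)\Rightarrow(2)$ argument matches the paper's and is fine. Your $(2)\Rightarrow(1)$ is essentially the paper's route (via \cref{thm:equality}: if all facets are series-parallel, equality holds in the $c=1$ case of the $f$-vector bound, so the number of facets is maximal and no strict refinement can exist). You float an alternative ``slick route'' --- that connected series-parallel matroid polytopes are matroidally indecomposable --- but you do not prove this, and it is a nontrivial claim that the paper does not use or need; if you go that way you would need a genuine argument for it, not a sketch.

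The real gap is $(1)\Rightarrow(3)$. You argue in the contrapositive: given an unsubdivided octahedron $O$ in $\mathcal{D}_P$, ``refining $\mathcal{D}_P$ along this split inside the face containing $O$ produces a strictly finer positroidal subdivision.'' But $O$ is in general a lower-dimensional interior face shared by several facets, and splitting it requires modifying the height function $P$ to a new positive tropical Pl\"{u}cker vector $P'$ whose induced regular subdivision $\mathcal{D}_{P'}$ (i) is globally a positroidal subdivision, (ii) genuinely refines $\mathcal{D}_P$, and (iii) actually splits $O$ --- all simultaneously. None of this is automatic, and since $\mathcal{D}_P$ finest means $P$ lies in (the relative interior of) a maximal cone of $\Dr^+_{k,n}$, a small perturbation of $P$ does \emph{not} change $\mathcal{D}_P$, so there is no obvious perturbation that does the job. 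This is precisely the content of $(1)\Rightarrow(3)$: proving that in a finest subdivision equality $P_{Sab}+P_{Scd}=P_{Sad}+P_{Sbc}$ never occurs. The paper's proof handles it with a genuinely different tool: it uses \cref{thm:1} and \cref{tropicalparam} to write $P=\Trop\Phi_G(\{X_\mu\})$ for a plabic graph $G$ with a perfect orientation sourced at $Sab$, exploits that finestness allows one to take the face parameters $X_\mu$ generic, normalizes so $P_{Sab}=0$, and then compares the face-multisets of the minimal flows computing $P_{Scd}$, $P_{Sad}$, $P_{Sbc}$ to show the equality would force two paths in a flow to share a vertex, a contradiction. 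Your proposal has no substitute for this argument, and the assertion that ``no genuinely new input should be needed'' beyond \cref{thm:1}, \cref{prop:positroidal}, and \cref{cor:seriesparallelgeometric} is not borne out: the flow/genericity analysis is the new input.
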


\begin{proof}
Suppose that (3) holds.  Let $\Gamma_M$ be a facet of this subdivision. Since $\dim \Gamma_M = n-1$, 
	the matroid $M$ is connected, and by hypothesis $M$ is a positroid. Hypothesis (3) says that $\Gamma_M$ 
	does not contain any octahedron, so Corollary~\ref{cor:seriesparallelgeometric} says that $M$ is series-parallel. We have shown $(2)$.

Now suppose that (2) holds.  If every facet is series-parallel,
then by \cref{thm:equality}, we get equality in the $f$-vector theorem,
and in particular get equality in the $c=1$ term.  So we have the 
maximal number of possible facets, so the positroidal subdivision is finest 
	possible.  This implies (1).

	Now suppose that (1) holds. To show that every octahedron in $\mathcal{D}_P$ is subdivided,
we need to show that we never have equality in a tropical 3-term Pl\"ucker relation, in other
words, we never have 
$$P_{Sab}+P_{Scd} = P_{Sad}+P_{Sbc}$$ for 
$a<b<c<d$ and $S\in {[n] \choose k-2}$ disjoint from $\{a,b,c,d\}$.

Using the fact that the positive Dressian equals the positive tropical Grassmannian 
	(\cref{thm:1}),
as well as \cref{tropicalparam},
	we can use flows in plabic graphs to parameterize the points in the positive Dressian, as in 
\cref{network_param}.  We note that it follows from the technology 
of \cite{PSW} 
	that a flow is uniquely determined by its weight $\wt(F)$
	(compare Definition 4.3 and Table 1, and note that flows
	are in bijection with almost perfect matchings). 

Let us choose a reduced plabic graph $G$ for $(Gr_{k,n})_{>0}$, i.e. a reduced plabic graph
	with trip permutation $(k+1,k+2,\dots,n, 1, 2, \dots,k)$, and choose a perfect orientation
	$\mathcal{O}$ with sources at $I_{\mathcal{O}} = Sab$.  (The fact that we can do so follows from e.g. 
	\cref{prop:perf}).

Then by 
\cref{tropicalparam}, we can express $P = \Trop \Phi_G(\{X_{\mu}\}),$
for some fixed real values $X_{\mu}$ labeling the faces of $G$.
In particular, the coordinates of $P = \{P_K\}_{K\in {[n] \choose k}}$ can be expressed as 
$P_K = \min_F (\Wt(F)),$ where $F$ ranges over all flows from $Sab$ to $K$, 
and $\Wt(F)$ is a sum of certain parameters $X_{\mu}$.

Since we are assuming that $\mathcal{D}_P$ is finest, we can assume that the 
parameters $X_{\mu}$ are generic: that these parameters are distinct real numbers,
and that there are not two different subsets of parameters whose sums coincide.

Let us consider the tropical Pl\"ucker coordinate $P_{Sab}$.  
This equals $\min_F (\Wt(F)),$ where $F$ ranges over all flows from $Sab$ to $Sab$; 
in this case, the flows $F$ are simply collections of vertex-disjoint cycles in $G$ (including 
the empty collection). We now explain how to reduce to the case that the flow achieving the minimum is the empty flow.

Let $F'$ be the flow achieving the minimum, so $F'$ is a collection of disjoint cycles. 
Adjust $\mathcal{O}$ to a new perfect orientation $\mathcal{O}'$ by reversing the orientation of all edges belonging to $F'$.
Then $\mathcal{O'}$ is again a perfect orientation
(see \cite[Lemma 4.5]{PSW}) 
and that (preserving the values of the $X_{\mu}$) 
the collection of new Pl\"ucker coordinates are all adjusted by the same scalar (the weight of $F'$),
preserving the point in tropical projective space which is represented by $P$.
Now, in the orientation $\mathcal{O}'$, the minimum flow for $P_{Sab}$ is the empty flow.
We therefore assume, from now on, that the minimum  flow for $P_{Sab}$ is the empty flow. With this reduction, we have $P_{Sab}=0$.

Meanwhile $P_{Scd}$ is the weight of the  minimal flow $F_2$ from 
$Sab$ to $Scd$, which will be
	a pair of paths $\{w_1,w_2\}$ taking $a$ to $d$ and $b$ to $c$ (plus possibly some closed loops).
$P_{Sad}$ is the weight of the minimal flow $F_3$ from $Sab$ to $Sad$, 
which will be a single path $w_3$ from $b$ to $d$ (plus possibly closed loops).
And $P_{Sbc}$ is the weight of the minimal flow $F_4$ from $Sab$ to $Sbc$,
which will be a single path $w_4$ from $a$ to $c$ (plus possibly closed loops), see \cref{Flows}.
	
\begin{figure}[h]
	\begin{tikzpicture}[scale=0.89]

\filldraw[fill=white, thick] (0,5) circle (1.5cm);
\node at (0,2) {\LARGE $P_{Sab}$};
\filldraw[fill=black] (-0.2,6.5) circle (0.08cm);
\filldraw[fill=black] (0.9,6.2) circle (0.08cm);
\filldraw[fill=black] (1.01,3.9) circle (0.08cm);
\filldraw[fill=black] (-0.5,3.6) circle (0.08cm);
\node at (-0.2,6.8) {\large $a$};
\node at (1.13,6.4) {\large $b$};
\node at (1.21,3.8) {\large $c$};
\node at (-0.5,3.3) {\large $d$};

\node at (2.5,5) {\huge +};

\filldraw[fill=white, thick] (5,5) circle (1.5cm);
\node at (5,2) {\LARGE $P_{Scd}$};
\filldraw[fill=black] (4.8,6.5) circle (0.08cm);
\filldraw[fill=black] (5.9,6.2) circle (0.08cm);
\filldraw[fill=black] (6.01,3.9) circle (0.08cm);
\filldraw[fill=black] (4.5,3.6) circle (0.08cm);
\node at (4.8,6.8) {\large $a$};
\node at (6.13,6.4) {\large $b$};
\node at (6.21,3.8) {\large $c$};
\node at (4.5,3.3) {\large $d$};
\draw[thick] plot [smooth] coordinates {
    (4.8,6.5) (4.9,6.2) (4.6,5.7) (4.8,5.2) (4.4,4.6) (4.5,3.8)};
\draw[-triangle 60] (4.45,4.1) -- (4.5,3.76);
\draw[thick] plot [smooth] coordinates {
    (5.9,6.2) (5.7,5.7) (5.9,5.3) (5.8,4.9) (6.1,4.5) (6.01,4.1)};
\draw[-triangle 60] (6.05,4.25) -- (6,4.05);
\node at (4.2,5) {\large $w_1$};
\node at (5.5,5.4) {\large $w_2$};
\draw[thick] plot [smooth cycle] coordinates {
    (5.1,5) (5.25,4.95) (5.3,4.7) (5.2,4.4) (5.3,4.1) (5.2,3.9) (5.1,3.9) (4.9,4.4)};
\draw[-triangle 60] (4.9,4.4) -- (4.95,4.65);

\node at (7.5,5) {\huge =};
\node at (7.5,5.5) {\LARGE ?};

\filldraw[fill=white, thick] (10,5) circle (1.5cm);
\node at (10,2) {\LARGE $P_{Sad}$};
\filldraw[fill=black] (9.8,6.5) circle (0.08cm);
\filldraw[fill=black] (10.9,6.2) circle (0.08cm);
\filldraw[fill=black] (11.01,3.9) circle (0.08cm);
\filldraw[fill=black] (9.5,3.6) circle (0.08cm);
\node at (9.8,6.8) {\large $a$};
\node at (11.13,6.4) {\large $b$};
\node at (11.21,3.8) {\large $c$};
\node at (9.5,3.3) {\large $d$};
\draw[thick] plot [smooth] coordinates {
    (10.9,6.2) (10.6,5.9) (10.7,5.1) (10.4,4.8) (9.8,4.4) (9.55,3.75)};
\draw[-triangle 60] (9.56,3.79) -- (9.54,3.73);
\node at (10.5,4.4) {\large $w_3$};
\draw[thick] plot [smooth cycle] coordinates {
    (9.2,5.67) (9.2,5.07) (9.5,4.97) (9.8,5.27) (10.2,5.37) (10.25,5.77) (9.7,5.92)};
\draw[-triangle 60] (9.8,5.92) -- (9.76,5.93);

\node at (12.5,5) {\huge +};

\filldraw[fill=white, thick] (15,5) circle (1.5cm);
\node at (15,2) {\LARGE $P_{Sbc}$};
\filldraw[fill=black] (14.8,6.5) circle (0.08cm);
\filldraw[fill=black] (15.9,6.2) circle (0.08cm);
\filldraw[fill=black] (16.01,3.9) circle (0.08cm);
\filldraw[fill=black] (14.5,3.6) circle (0.08cm);
\node at (14.8,6.8) {\large $a$};
\node at (16.13,6.4) {\large $b$};
\node at (16.21,3.8) {\large $c$};
\node at (14.5,3.3) {\large $d$};
\draw[thick] plot [smooth] coordinates {
    (14.8,6.5) (14.8,5.8) (15.8,4.8) (16,4.07)};
\draw[-triangle 60] (16,4.1) -- (16.01,4.05);
\node at (15.3,5.85) {\large $w_4$};

\end{tikzpicture}
\caption{
Flows $F_1, F_2, F_3, F_4$ used to compute $P_{Sab}, P_{Scd}, P_{Sad}, P_{Sbc}$.}
\label{Flows}
\end{figure}
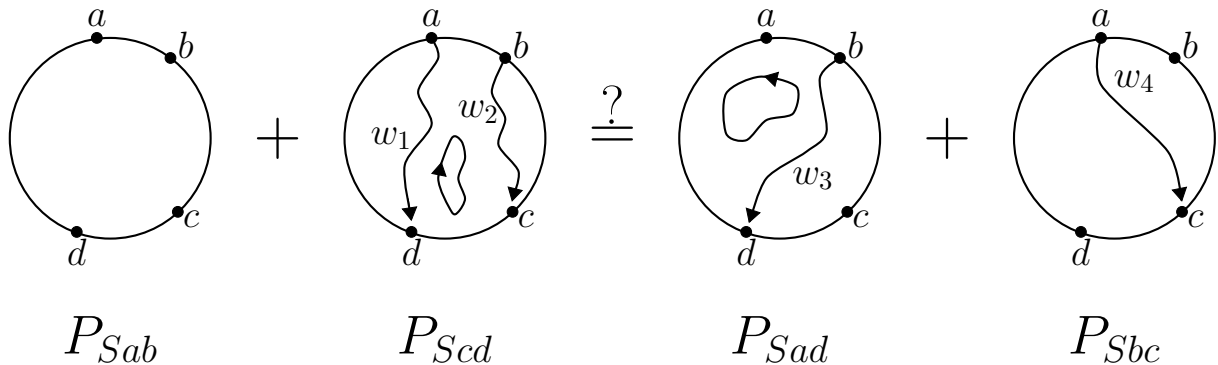

But now because our parameters $X_{\mu}$ associated to the faces are generic, the only way 
to get equality $P_{Sab}+P_{Scd} = P_{Sad}+P_{Sbc}$ is if our minimal flow $F_2$ from 
$S_{ab}$ to $S_{cd}$ has to its left precisely the same multiset of faces that the pair of 
flows $(F_3,F_4)$ (which consists of the paths $w_3,w_4$ plus possibly some loops) does.
This is only possible if $w_1$ and $w_2$ are obtained from $w_3$ and $w_4$ by ``switching tails''
at an intersection point of $w_3$ and $w_4$.  But then $\{w_1, w_2\}$ would not be vertex-disjoint
and hence not part of a flow.
\end{proof}

Combining \cref{thm:1}, 
\cref{thm:equality}, and 
\cref{thm:octahedron}, we now have the following.
\begin{corollary}\label{cor:equality}
Every finest positroidal subdivision of $\Delta_{k,n}$ achieves
equality in the $f$-vector theorem.  
	In particular, such a positroidal 
subdivision has precisely ${n-2 \choose k-1}$ facets.
\end{corollary}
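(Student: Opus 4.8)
The plan is to obtain the corollary as a formal consequence of the three results just proved. First I would take a finest positroidal subdivision $\mathcal{D}$ of $\Delta_{k,n}$ and write it as $\mathcal{D} = \mathcal{D}_P$ for a positive tropical Pl\"ucker vector $P = \{P_I\}$ (using \cref{prop:positroidal}), so that $P \in \Dr^+_{k,n}$. By \cref{thm:1} we then have $P \in \Trop^+ Gr_{k,n} \subseteq \Trop Gr_{k,n}$, so $P$ arises as $\val(p_I(A))$ for some $A \in Gr_{k,n}(\K)$. This is exactly the input needed to make the first author's $f$-vector theorem applicable: \cref{thm:equality} is stated only for realizable tropical Pl\"ucker vectors, and the content of \cref{thm:1} is precisely that positive tropical Pl\"ucker vectors are automatically realizable.

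Next I would apply \cref{thm:octahedron}: since $\mathcal{D}_P$ is a finest subdivision, the implication $(1) \Rightarrow (2)$ shows that every facet of $\mathcal{D}_P$ is the matroid polytope of a series-parallel matroid. Feeding $P$ into \cref{thm:equality}, we conclude that $\mathcal{D}_P$ has at most $\frac{(n-c-1)!}{(k-c)!(n-k-c)!(c-1)!}$ interior faces of dimension $n-c$, and since its facets are all series-parallel the equality clause of that theorem holds; so $\mathcal{D}_P$ achieves equality in the $f$-vector theorem for every $c$. Specializing to $c = 1$ and using $\frac{(n-2)!}{(k-1)!(n-k-1)!\,0!} = \binom{n-2}{k-1}$ gives the stated count of facets.

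There is essentially no new argument to make here; the only thing requiring care is the lining up of hypotheses, namely routing everything through \cref{thm:1} so that $P$ is simultaneously a positive tropical Pl\"ucker vector (to invoke \cref{thm:octahedron}) and a realizable one (to invoke \cref{thm:equality}). If one wants the statement for an arbitrary finest positroidal subdivision rather than a regular one, one additionally needs that such a subdivision is regular, hence of the form $\mathcal{D}_P$; this is where I would expect any remaining difficulty to lie.
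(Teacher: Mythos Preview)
Your proposal is correct and matches the paper's approach exactly: the paper simply writes ``Combining \cref{thm:1}, \cref{thm:equality}, and \cref{thm:octahedron}'' and states the corollary, and you have spelled out precisely how these three ingredients fit together. Your closing caveat about regularity is apt: in context the corollary is about regular positroidal subdivisions (those of the form $\mathcal{D}_P$), as the paper itself later exhibits a nonregular positroidal subdivision in \cref{sec:nonregular}.
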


\section{Nonregular positroidal subdivisions}\label{sec:nonregular}

In this paper we have discussed
the positive Dressian, which consists of weight functions on the vertices of the 
hypersimplex $\Delta_{k,n}$ which induce positroidal subdivisions of $\Delta_{k,n}$;
recall that subdivisions induced by weight functions are 
called \emph{regular} or \emph{coherent}. 
It is also natural to consider the set of \emph{all} 
positroidal subdivisions of $\Delta_{k,n}$, 
whether or not they are regular.
(See \cite{Triangulations} for background on regular subdivisions.)
In this section, we will construct a nonregular positroidal subdivision of $\Delta_{3,12}$, 
and also make a connection to the theory of tropical hyperplane arrangements and tropical oriented matroids~\cite{ArdilaDevelin, Horn}.

Our strategy for producing the counterexample is as follows.  
We will start with a standard example of a nonregular rhombic tiling of 
a hexagon (with side lengths equal to $3$), and extend it to a
nonregular mixed subdivision of 
$9 \Delta_2$; this mixed subdivision gives rise to a dual arrangement
$\mathcal{H}$ of $9$ \emph{tropical pseudohyperplanes} in $\TT \PP^2$.  
Moreover,
the mixed subdivision corresponds, via the \emph{Cayley trick}, to a polyhedral
subdivision of $\Delta_2 \times \Delta_8$.  We then 
map this polyhedral subdivision
to a matroidal subdivision of $\Delta_{3,12}$, and analyze the $0$-dimensional
regions of $\mathcal{H}$ to show that it is a positroidal subdivision of
$\Delta_{3,12}$.
Note that \cite[Example 4.7]{Herrmann2008HowTD} used a similar strategy to encode
a nonregular matroidal subdivision of $\Delta_{3,9}$.  We give a careful
exposition here in order to verify that our subdivision is positroidal.

\subsection{The product of simplices and the hypersimplex}
Let $I$ be any $k$-element subset of $[n]$ and let $J = [n] \setminus I$.
Let $\Poly_I \subset \Delta_{k,n}$ be the convex hull of all points
of the form $e_I - e_i + e_j$ for $i \in I$ and $j \in J$; clearly this set of 
points is in bijection with $I \times J$.
The polytope $\Poly_I$ is isomorphic to $\Delta_{k-1} \times \Delta_{n-k-1}$, 
with vertices in bijection with $I \times J$.  $\Poly_I$ has dimension $n-2$
and sits inside $\Delta_{k,n}$, which has dimension $n-1$.
We review standard constructions for passing between polyhedral subdivisions of $\Poly_I$ and matroidal subdivisions of $\Delta_{k,n}$. 
We will be interested in polyhedral subdivisions of $\Poly_I$ all of whose vertices are vertices of $\Poly_I$, and we will take the phrase ``subdivision of $\Poly_I$" to include this condition. 

In many references, $I$ is standardized to be $[k]$. However, we will want to keep track of how these standard constructions relate to the property of a matroid being a positroid and, for this purpose, it will be important how $I$ sits inside the circularly ordered set $[n]$, so we do not impose a standard choice of $I$.

Given a matroidal subdivision $\cD$ of $\Delta_{k,n}$, we can intersect $\cD$ with $\Poly_I$ and obtain a polyhedral subdivision $\cG_I$ of $\Poly_I$. 
If $\cD$ is regular, so is $\cG_I$.

\subsection{From 
 subdivisions of $\Poly_I$ to subdivisions of $\Delta_{k,n}$}
Following \cite[Theorem 7 and Remark 8]{Dressian}, as well as \cite{Felipe},
 we will explain how to 
map each convex hull of vertices of $\Poly_I$ to 
a matroid polytope 
 inside
$\Delta_{k,n}$; this will be the matroid polytope of a \emph{principal 
transversal matroid}. 

Let $X \subseteq I \times J$. 
We define a polytope 
$\gamma(X) = \mathrm{Hull}_{(i,j) \in X} (e_I - e_i + e_j) \subseteq \Poly_I$.
We also define a bipartite graph $G(X)$ with vertex set $I \sqcup J = [n]$ and an edge from $i \in I$ to $j \in J$ if and only if $(i,j) \in X$.

Associated to the graph $G(X)$ is the \emph{principal transversal matroid} $\Trans(G(X))$ (see \cite{Brualdi} and \cite[Chapter 7]{White}),
 defined as follows: 
 $B$ is a basis of $\Trans(G(X))$ if and only if there is a matching of $I \setminus B$ to $J \cap B$ in the bipartite graph $G(X)$. The matroid 
 $\Trans(G(X))$ is realized by  a
 $k \times n$ matrix $A = A_X$, with rows labeled by $I$ and columns labeled by $[n]$ where:
 \begin{itemize}
	 \item the values $A_{i j}$ for $(i, j) \in X$ (where $i\in I$ and $j\in J$)
		 are algebraically independent, 
	 \item $A_{i j} = 0$ if $(i, j) \not\in X$ (where $i\in I$ and $j \in J$),
	 \item $A_{i i'}=\delta_{i i'}$ (where $i, i'\in I$).
 \end{itemize}
\begin{remark}\label{rem:identity}
Note that the restriction of $A$ to the columns labeled by $I$ is 
the $k \times k$ identity matrix.
\end{remark}
In terms of polyhedral geometry, the matroid polytope of $\Trans(G(X))$ is the intersection of $\Delta_{k,n}$ with $e_I + \Span_{\RR_{\geq 0}} \{ e_j - e_i : (i,j) \in X \}$. 
Summarizing, we have the following.

\begin{lemma}\label{lem:trans}
Each polytope 
$\gamma(X) = \mathrm{Hull}_{(i,j) \in X} (e_I - e_i + e_j) \subseteq \Poly_I$
gives rise to 
 the matroid polytope $\Gamma_{\Trans(G(X))} 
\subseteq \Delta_{k,n}$.  Abusing notation, we say that 
	$\Trans$ maps $\gamma(X)$ to $\Gamma_{\Trans(G(X))}$.
\end{lemma}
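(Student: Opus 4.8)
The statement to prove, Lemma~\ref{lem:trans}, asserts that the polytope
$\gamma(X) = \mathrm{Hull}_{(i,j) \in X}(e_I - e_i + e_j)$ equals the matroid
polytope $\Gamma_{\Trans(G(X))}$, where $\Trans(G(X))$ is the principal
transversal matroid of the bipartite graph $G(X)$ realized by the matrix
$A = A_X$ described above. The plan is to show both containments of polytopes
by working at the level of vertices, then invoke the realization of
$\Trans(G(X))$ by $A_X$ to identify the two descriptions of the matroid.

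First I would spell out the claim from the remark preceding the lemma, namely
that the matroid polytope of $\Trans(G(X))$ is the intersection of
$\Delta_{k,n}$ with the affine cone $e_I + \Span_{\RR_{\geq 0}}\{e_j - e_i :
(i,j) \in X\}$; this is the geometric shadow of the combinatorial description
``$B$ is a basis iff $I \setminus B$ can be matched into $J \cap B$ in
$G(X)$.'' Concretely, I would argue: a vertex $e_B$ of $\Delta_{k,n}$ lies in
$e_I + \Span_{\RR_{\geq 0}}\{e_j - e_i : (i,j)\in X\}$ exactly when $e_B - e_I$
is a nonnegative combination of the vectors $e_j - e_i$, which by integrality
and the structure of $\{0,1\}$-vectors forces $e_B - e_I = \sum_{(i,j) \in
M}(e_j - e_i)$ for some subset $M \subseteq X$ using each element of $I
\setminus B$ and each element of $J \cap B$ exactly once --- i.e.\ $M$ is a
perfect matching of $(I\setminus B) \sqcup (J \cap B)$ in $G(X)$. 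So the
lattice points of $\Delta_{k,n} \cap (e_I + \Span_{\RR_{\geq 0}}\{e_j-e_i\})$
are exactly the $e_B$ with $B$ a basis of $\Trans(G(X))$. Since each vertex
$e_I - e_i + e_j$ defining $\gamma(X)$ (for $(i,j)\in X$) is such a lattice
point --- take $B = (I \setminus \{i\}) \cup \{j\}$, matched by the single edge
$(i,j)$ --- and since conversely a matching $M \subseteq X$ expresses $e_B$ as
$e_I + \sum_{(i,j)\in M}(e_j - e_i)$, which is a convex combination (after
rescaling) of the generators $e_I - e_i + e_j$, the two polytopes have the same
vertex set and hence are equal.

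The one point requiring genuine care --- and the step I expect to be the main
obstacle --- is verifying that the transversal matroid $\Trans(G(X))$ is indeed
\emph{realized} by the matrix $A_X$ with the stated block structure (identity
on the columns of $I$, generic entries on edges of $X$, zeros elsewhere): one
must check that a maximal minor $p_B(A_X)$ is nonzero precisely when $I
\setminus B$ matches into $J \cap B$ in $G(X)$. This is the classical
Edmonds--Fulkerson / Lindström realization of transversal matroids; by
Laplace/Cauchy--Binet expansion along the identity columns indexed by $I \cap
B$, the minor $p_B(A_X)$ reduces to the determinant of the square submatrix of
$A_X$ with rows $I \setminus B$ and columns $J \cap B$, whose full expansion is
(up to sign) a sum over perfect matchings of $(I\setminus B) \sqcup (J\cap B)$
of monomials in the algebraically independent entries, hence nonzero iff at
least one such matching exists. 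Citing \cite{Brualdi} and \cite[Chapter
7]{White} for this fact, the rest is the bookkeeping above. Finally I would
record, via Remark~\ref{rem:identity}, that the identity block guarantees $A_X$
has full rank $k$, so $\Trans(G(X))$ is genuinely a rank-$k$ matroid on $[n]$
and $\Gamma_{\Trans(G(X))}$ sits inside $\Delta_{k,n}$ as claimed, completing
the identification $\Trans(\gamma(X)) = \Gamma_{\Trans(G(X))}$.
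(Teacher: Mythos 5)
The central difficulty is that you have misread what the lemma asserts, and the stronger statement you set out to prove is actually false. The lemma does \emph{not} claim that $\gamma(X) = \Gamma_{\Trans(G(X))}$; it merely records that to each subpolytope $\gamma(X) \subseteq \Poly_I$ one can \emph{associate} the matroid polytope $\Gamma_{\Trans(G(X))} \subseteq \Delta_{k,n}$, and it introduces the notational convention of writing this assignment as $\Trans$. In the paper this ``lemma'' is a summary of the preceding discussion (it is prefaced by ``Summarizing, we have the following'') and carries no proof. Equality of the two polytopes cannot hold in general: $\gamma(X)$ lies inside $\Poly_I$, which has dimension $n-2$ and satisfies $\sum_{j\in J} x_j = 1$, whereas $\Gamma_{\Trans(G(X))}$ is typically $(n-1)$-dimensional. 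For instance, if $X = I\times J$ then $\gamma(X) = \Poly_I \cong \Delta_{k-1}\times\Delta_{n-k-1}$ while $\Trans(G(X)) = U_{k,n}$, so $\Gamma_{\Trans(G(X))} = \Delta_{k,n}$.

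The concrete gap in your argument is in the final step, where you assert that for a basis $B$ with matching $M\subseteq X$, the vertex $e_B = e_I + \sum_{(i,j)\in M}(e_j-e_i)$ ``is a convex combination (after rescaling) of the generators $e_I - e_i + e_j$.'' Writing it out gives
\[
e_B = \sum_{(i,j)\in M}(e_I - e_i + e_j) - (|M|-1)\,e_I,
\]
which is an \emph{affine} combination with a negative coefficient on $e_I$ whenever $|M|\geq 2$, and $e_I$ is not among the generators of $\gamma(X)$. Equivalently, every generator of $\gamma(X)$ lies on the hyperplane $\sum_{j\in J} x_j = 1$, but $e_B$ has $\sum_{j\in J}(e_B)_j = |J\cap B| = |M|$, so $e_B\in\gamma(X)$ only when $|M|=1$. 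In particular $e_I$ itself (the basis with empty matching) is a vertex of $\Gamma_{\Trans(G(X))}$ that is not in $\gamma(X)$. So the two polytopes do not share a vertex set, and the claimed equality fails.

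That said, the intermediate work in your proposal is sound and does capture the actual mathematical content surrounding the lemma: the Cauchy--Binet/Laplace argument that $A_X$ realizes $\Trans(G(X))$ is correct, and your description of the lattice points of $\Delta_{k,n}\cap\bigl(e_I + \Span_{\RR_{\geq 0}}\{e_j - e_i : (i,j)\in X\}\bigr)$ as exactly the $e_B$ for $B$ a basis of $\Trans(G(X))$ is precisely the geometric characterization of $\Gamma_{\Trans(G(X))}$ that the paper states just before the lemma. What is missing is the realization that the lemma only defines a correspondence $\gamma(X)\mapsto\Gamma_{\Trans(G(X))}$; it does not identify the two polytopes, and trying to do so led you to a false step.
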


If $\cG$ is a polyhedral subdivision of $\Poly_I$, then we can apply $\Trans$ to each polytope in $\cG$.

\begin{proposition}\cite[Theorem 7 and Remark 8]{Dressian} and \cite{Felipe}
If we apply $\Trans$ to each polytope in a 
polyhedral subdivision $\cG$ of $\Poly_I$, then 
we will obtain a matroid subdivision $\Trans(\cG)$ of $\Delta_{k,n}$.
 The subdivision $\Trans(\cG)$ is regular if and only if $\cG$ is. 
\end{proposition}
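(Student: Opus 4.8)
The statement has two parts: that $\Trans(\cG)$ is a matroidal subdivision of $\Delta_{k,n}$, and that $\Trans(\cG)$ is regular exactly when $\cG$ is. By \cref{lem:trans} each cell $\gamma(X)$ of $\cG$ is sent to a genuine matroid polytope, namely $\Gamma_{\Trans(G(X))}=\Delta_{k,n}\cap\bigl(e_I+\Span_{\RR_{\geq 0}}\{e_j-e_i:(i,j)\in X\}\bigr)$; so for the first part it only remains to see that these polytopes fit together into a polyhedral subdivision of $\Delta_{k,n}$.

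The plan for the first part is to exhibit $\Trans(\cG)$ as the intersection with $\Delta_{k,n}$ of a polyhedral \emph{fan} manufactured from $\cG$. Put $\sigma(X):=\Span_{\RR_{\geq 0}}\{e_j-e_i:(i,j)\in X\}$, which is the cone over $\gamma(X)-e_I$ with apex at the origin. Because $e_I\notin\Poly_I$, the affine hull of $\Poly_I-e_I$ misses the origin, so coning the cells of the subdivision $\cG-e_I$ of $\Poly_I-e_I$ from the origin yields a genuine polyhedral fan $\Sigma:=\{\sigma(X):\gamma(X)\in\cG\}$ with support $\Span_{\RR_{\geq 0}}\{e_j-e_i:i\in I,\,j\in J\}$, whose cone-intersections and face relations copy those of $\cG$. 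A transportation argument shows $p-e_I\in\Span_{\RR_{\geq 0}}\{e_j-e_i\}$ for every $p\in\Delta_{k,n}$: one must write $p-e_I=\sum\lambda_{ij}(e_j-e_i)$ with $\lambda$ a nonnegative matrix having row sums $1-p_i$ $(i\in I)$ and column sums $p_j$ $(j\in J)$, and such a matrix exists since these nonnegative marginals have the common total $k-\sum_{i\in I}p_i$. Hence $\Delta_{k,n}\subseteq e_I+|\Sigma|$, and intersecting the translated fan $e_I+\Sigma$ with the polytope $\Delta_{k,n}$ produces a polyhedral subdivision of $\Delta_{k,n}$ whose cells are precisely the $(e_I+\sigma(X))\cap\Delta_{k,n}=\Gamma_{\Trans(G(X))}$; by \cref{lem:trans} these are matroid polytopes, so this is $\Trans(\cG)$ and it is matroidal. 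The remaining verifications --- that $(e_I+\sigma(X'))\cap\Delta_{k,n}$ is a face of $(e_I+\sigma(X))\cap\Delta_{k,n}$ whenever $\gamma(X')$ is a face of $\gamma(X)$ (use the supporting functional of $\sigma(X')$ inside $\sigma(X)$), and that distinct cells of $\cG$ yield distinct cells --- are routine. I also record the byproduct, used below, that $\Poly_I=\Delta_{k,n}\cap\{\sum_{i\in I}x_i=k-1\}$ and that the hyperplane $\{\sum_{i\in I}x_i=k-1\}$ meets $e_I+\sigma(X)$ in $e_I+\conv\{e_j-e_i:(i,j)\in X\}=\gamma(X)$, so restricting $\Trans(\cG)$ to $\Poly_I$ returns $\cG$ exactly.

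For the regularity equivalence, one direction follows from this byproduct and the fact recorded earlier in this section that restricting a regular subdivision of $\Delta_{k,n}$ to $\Poly_I$ stays regular: if $\Trans(\cG)$ is regular, so is its restriction $\cG$. For the converse, suppose $\cG=\cG_h$ is induced by a height function $h$ on the vertices $e_I-e_i+e_j$ of $\Poly_I$, and define heights $w$ on the vertices of $\Delta_{k,n}$ by the tropical-determinant formula
\[
w_B \ := \ \min_{\varphi\colon I\setminus B\to J\cap B}\ \sum_{i\in I\setminus B} h\bigl(i,\varphi(i)\bigr),
\]
the minimum ranging over all bijections $\varphi$. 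This is $\val\bigl(p_B(A(t))\bigr)$ for the matrix $A(t)$ of \cref{lem:trans} with its free entry $A_{ij}$ replaced by $c_{ij}\,t^{h(i,j)}$ (generic $c_{ij}$), which represents the uniform matroid; hence $w\in\Trop Gr_{k,n}\subseteq\Dr_{k,n}$, so $\cD_w$ is a regular matroidal subdivision of $\Delta_{k,n}$, and it remains to check $\cD_w=\Trans(\cG_h)$. I would do this by showing every maximal cell $\Gamma_{\Trans(G(X))}$ of $\Trans(\cG_h)$ is a cell of $\cD_w$: starting from a linear functional on $\RR^{I\times J}$ that selects the cell $\gamma(X)$ in the $h$-lift of $\Poly_I$, and using the explicit form of $w_B$, extend it to a linear functional on $\RR^n$ whose $w$-lower face has vertex set equal to the bases of $\Trans(G(X))$; since both $\cD_w$ and $\Trans(\cG_h)$ subdivide $\Delta_{k,n}$ and the cells exhibited are full-dimensional, this forces $\cD_w=\Trans(\cG_h)$.

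The main obstacle is precisely this last check --- reconciling the piecewise-linear height vector $w$ with the combinatorially defined subdivision $\Trans(\cG_h)$ cell by cell. The rest is supplied by \cref{lem:trans} (the identification of $\Gamma_{\Trans(G(X))}$ as a matroid polytope and its cone description) together with standard facts of polyhedral geometry: coning a subdivision from a point off its affine hull produces a fan, intersecting a fan with a polytope contained in its support produces a subdivision of the polytope, and a transportation polytope with consistent nonnegative marginals is nonempty.
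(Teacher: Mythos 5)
The paper does not actually prove this proposition; it simply cites \cite[Theorem 7 and Remark 8]{Dressian} and \cite{Felipe}, so there is no internal proof to compare against. Your argument is a self-contained derivation, and it is correct. The coning step in the first part is sound: since $\Poly_I$ lies on the affine hyperplane $\{\sum_{i\in I}x_i = k-1\}$, which after translating by $-e_I$ becomes $\{\sum_{i\in I}x_i=-1\}$ and misses the origin, the cone over $\cG-e_I$ is indeed a fan; your transportation-polytope check correctly shows $\Delta_{k,n}\subseteq e_I+|\Sigma|$; and the paper's own description of $\Gamma_{\Trans(G(X))}$ as $\Delta_{k,n}\cap(e_I+\Span_{\RR_{\geq 0}}\{e_j-e_i:(i,j)\in X\})$ identifies the resulting cells.

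For the part you flag as the ``main obstacle,'' the computation does go through and is worth recording. Suppose $(\mu_i)_{i\in I},(\nu_j)_{j\in J}$ are chosen so that $\mu_i+\nu_j-h(i,j)\le 0$ with equality exactly for $(i,j)\in X$; this is what it means for $\gamma(X)$ to be selected in the $h$-lift of $\Poly_I$. Set $\lambda_i=-\mu_i$ for $i\in I$ and $\lambda_j=\nu_j$ for $j\in J$. Using $|I\setminus B|=|J\cap B|$ and that any bijection $\varphi\colon I\setminus B\to J\cap B$ satisfies $\sum_{j\in J\cap B}\nu_j=\sum_{i\in I\setminus B}\nu_{\varphi(i)}$, one gets
\[
\langle\lambda,e_B\rangle-w_B
= -\sum_{i\in I}\mu_i+\max_{\varphi}\sum_{i\in I\setminus B}\bigl(\mu_i+\nu_{\varphi(i)}-h(i,\varphi(i))\bigr)
\le -\sum_{i\in I}\mu_i,
\]
with equality if and only if some $\varphi$ has $(i,\varphi(i))\in X$ for all $i\in I\setminus B$, i.e.\ exactly when $B$ is a basis of $\Trans(G(X))$. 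So $\lambda$ picks out $\Gamma_{\Trans(G(X))}$ as a cell of $\cD_w$, and since those cells already cover $\Delta_{k,n}$ you conclude $\cD_w=\Trans(\cG_h)$ as you intended. Two small remarks: you do not actually need the tropical-Grassmannian detour to conclude regularity, since $\cD_w$ is regular by construction from any height vector $w$ and you have already proved $\Trans(\cG)$ is matroidal in the first part; and the reverse implication is, as you say, immediate from the paper's earlier observation that restricting a regular subdivision of $\Delta_{k,n}$ to $\Poly_I$ stays regular, together with your byproduct that restricting $\Trans(\cG)$ to $\Poly_I$ returns $\cG$.
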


We will eventually be studying triangulations of $\Poly_I$, so we will want to focus on the case that $\gamma(X)$ is an $(n-2)$-dimensional simplex.

\begin{lemma} \label{SimplexTreeSP}
Let $X \subseteq I \times J$. The following are equivalent:
\begin{enumerate}
\item The polytope $\gamma(X)$ is an $(n-2)$-dimensional simplex.
\item The graph $G(X)$ is a tree on the vertices $[n]$.
\end{enumerate}
\end{lemma}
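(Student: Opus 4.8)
The plan is to prove the two implications separately, using the fact that $\gamma(X)$ sits inside the $(n-2)$-dimensional polytope $\Poly_I$ and that its vertices are among the $e_I - e_i + e_j$. I would start by recording the dimension count: since $\gamma(X) = \mathrm{Hull}_{(i,j) \in X}(e_I - e_i + e_j)$ and these $|X|$ points are affinely independent exactly when $\gamma(X)$ is a simplex of dimension $|X|-1$, the statement ``$\gamma(X)$ is an $(n-2)$-dimensional simplex'' is equivalent to ``$|X| = n-1$ and the points $\{e_I - e_i + e_j : (i,j)\in X\}$ are affinely independent.'' In parallel, a tree on the vertex set $[n]$ is precisely a graph with $n-1$ edges and no cycles (equivalently, $n-1$ edges and connected). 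So both conditions will factor through the edge count $|X| = n-1$, and the real content is matching ``affine independence of the $e_I - e_i + e_j$'' with ``acyclicity/connectivity of $G(X)$.''

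For the heart of the argument I would translate affine dependence of the points $e_I - e_i + e_j$ into the combinatorics of $G(X)$. A vanishing affine combination $\sum_{(i,j)\in X} c_{ij}(e_I - e_i + e_j) = 0$ with $\sum c_{ij} = 0$ forces, upon reading off the $e_m$-coordinate for each $m \in [n]$, that the signed sums of the $c_{ij}$ around each vertex of $G(X)$ vanish; concretely, for $i \in I$ we get $\sum_{j : (i,j)\in X} c_{ij} = 0$ and for $j \in J$ we get $\sum_{i : (i,j)\in X} c_{ij} = 0$. This is exactly the condition that the edge-weighting $(c_{ij})$ is a cycle in the cycle space of the bipartite graph $G(X)$. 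Hence the points are affinely independent if and only if $G(X)$ has no nonzero cycle, i.e. $G(X)$ is a forest. Combined with $|X| = n-1$ edges on $n$ vertices, ``forest'' upgrades to ``spanning tree''; conversely a spanning tree has $n-1$ edges and empty cycle space, giving affine independence and the full dimension $n-2$. One subtlety to handle carefully is the affine (rather than linear) nature of the relation — I would note that the constraint $\sum c_{ij} = 0$ is automatically implied by the per-vertex relations after summing over $I$ (or over $J$), so no information is lost, and also that $e_I$ drops out of every coordinate difference so it plays no role.

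The step I expect to be the main obstacle is getting the bookkeeping between ``$G(X)$ is a tree \emph{on all of $[n]$}'' (as opposed to a forest, or a tree on a proper subset of the vertices) exactly right. If $|X| < n-1$ then $\gamma(X)$ cannot be $(n-2)$-dimensional regardless of acyclicity, and $G(X)$ cannot be a spanning tree since it has too few edges; if $|X| = n-1$ and $G(X)$ has a cycle then it is disconnected (isolated vertices appear), killing both the spanning-tree property and — via the nonzero cycle in cycle space — the affine independence. I would make sure the write-up explicitly invokes the equivalence ``$n-1$ edges $+$ acyclic $\iff$ $n-1$ edges $+$ connected $\iff$ spanning tree'' so that neither direction has a gap. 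Everything else is the routine coordinate computation sketched above, which I would present compactly rather than in full.
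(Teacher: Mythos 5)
Your proof is correct and takes essentially the same approach as the paper, which simply asserts that ``$(n-2)$-dimensional simplex'' is equivalent to ``$n-1$ edges with no cycle'' and hence to ``tree on $[n]$''. The only difference is that you flesh out the key middle step — that affine independence of the points $e_I - e_i + e_j$ is the same as triviality of the cycle space of $G(X)$ — via the explicit coordinate/per-vertex computation, which the paper leaves implicit as ``simple.''
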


\begin{proof} 
	The equivalence of (1) and (2) is simple.  The polytope
	$\gamma(X)$ is an $(n-2)$-dimensional simplex if and only if 
	it's the convex hull of $(n-1)$ affinely independent points.
	But this is equivalent to the statement that $G(X)$ consists of 
	$n-1$ edges and no subset forms a cycle.  This means that 
	$G(X)$ is a tree on $[n]$.
\end{proof}

\begin{remark}
The conditions from \cref{SimplexTreeSP} are additionally equivalent
to the condition that 
the matroid $\Trans(G(X))$ is series-parallel.  One can prove this 
	using e.g. \cite[Proposition 5.1]{Speyer}. 
\end{remark}

	We will want to know when the matroids in Lemma~\ref{SimplexTreeSP} are positroidal.  One direction of \cref{PlanarTree} comes from 
	\cite[Theorem 6.3]{Marcott}.
\begin{lemma} \label{PlanarTree}
Suppose that $X$ is a subset of  $I \times J$ such that $G(X)$ is a tree. The matroid $\Trans(G(X))$ is positroidal if and only if we can embed the tree $G(X)$ in a disk so that it is planar, and its vertices lie on the boundary of the disk in the standard circular order on $I \sqcup J = [n]$.
\end{lemma}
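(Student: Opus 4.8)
The statement asks for a characterization of when the principal transversal matroid $\Trans(G(X))$ of a tree $G(X)$ on the circularly ordered vertex set $[n]$ is a positroid, namely that this holds precisely when the tree can be drawn planarly in a disk with the vertices on the boundary in their circular order. One direction -- that planarity of this kind forces $\Trans(G(X))$ to be a positroid -- is already available to us via \cite[Theorem 6.3]{Marcott}, so the work is in the converse: if the tree has a ``crossing'' (two edges $ii'$ and $jj'$ with $i,j,i',j'$ in circular order and no way to redraw the tree without a crossing), then $\Trans(G(X))$ is not a positroid. The plan is to show that a non-planar-in-this-sense tree produces a forbidden minor, and then invoke that positroids are closed under taking minors \cite{ARW} together with the fact (\cref{cor:seriesparallel}, since the matroid is series-parallel by the remark following \cref{SimplexTreeSP}) that a connected series-parallel positroid cannot contain $U_{2,4}$; but in fact, since $\Trans(G(X))$ is already series-parallel, the obstruction to being a positroid must be more subtle than a $U_{2,4}$ minor, so instead I would use the cyclic-interval / non-crossing description of positroids directly.

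\textbf{Key steps.} First I would recall Postnikov's characterization: a matroid $M$ on $[n]$ is a positroid if and only if each of its cyclic intervals $[i,j] = \{i, i+1, \dots, j\}$ (cyclically) has the property that the restriction/contraction behaves well -- more usefully, $M$ is a positroid iff for every cyclic rotation of the ground set, the lexicographically minimal basis is obtained greedily, or equivalently (Oh's theorem) $M$ is the intersection of $n$ cyclically shifted Schubert matroids. I would instead lean on the polytopal criterion that is cleanest here: a matroid polytope $\Gamma_M \subseteq \Delta_{k,n}$ is a positroid polytope if and only if every edge of $\Gamma_M$ is parallel to $e_i - e_j$ for some $i,j$ \emph{and} $\Gamma_M$ is cut out by inequalities of the form $\sum_{\ell \in [i,j]} z_\ell \le r_{ij}$ for cyclic intervals $[i,j]$ (this is the Ardila--Rincón--Williams description). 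Second, I would unwind the transversal-matroid structure: for a tree $T = G(X)$, the rank of a cyclic interval $[i,j]$ in $\Trans(T)$ can be read off from $T$ by a matching/deletion-contraction argument, using \cref{rem:identity} that $A$ restricted to columns $I$ is the identity. Third, I would show: if $T$ is planar-in-a-disk with boundary vertices in circular order, then deleting any cyclic interval's complement from $T$ leaves a planar forest, and the rank function on cyclic intervals satisfies the submodular/interval consistency forcing the positroid inequalities; conversely, a crossing pair of edges $\{i i', j j'\}$ with $i < j < i' < j'$ cyclically yields, after contracting all other edges of $T$ (legitimate since $T$ is a tree, so contracting edges keeps us inside the transversal-matroid family and corresponds to merging leaves), a minor isomorphic to the transversal matroid of a $4$-cycle's worth of data on $4$ elements in the ``crossed'' configuration, which one checks directly is not a positroid on $[4]$ -- it violates a cyclic-interval rank inequality.

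\textbf{Main obstacle.} The crux, and the step I expect to be hardest, is the converse direction's reduction: making precise the claim that a tree which \emph{cannot} be embedded planarly in the disk with the prescribed boundary order necessarily contains an honest crossing pair of edges whose ``crossedness'' survives contraction of the remaining edges down to a $4$-element witness. The subtlety is that planarity here is not just graph planarity but planarity \emph{respecting a fixed cyclic boundary order} -- a tree is always planar as an abstract graph, so the obstruction is combinatorial, of the form: there exist leaves (or branch vertices) $i, i', j, j'$ appearing in cyclic order $i, j, i', j'$ such that the path in $T$ from $i$ to $i'$ and the path from $j$ to $j'$ share an edge, or more generally are forced to cross. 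I would formalize this via the ``non-crossing trees'' dichotomy: a tree with vertices on a circle is non-crossing (drawable without edge crossings) iff no two of its edges $\{a,b\}$, $\{c,d\}$ interleave on the circle; this is immediate for trees since each edge-removal disconnects $T$ into two arcs, and two edges interleave iff those arc-partitions cross. So ``not planar in a disk with boundary order $[n]$'' is exactly ``$T$ has two interleaving edges $ii'$, $jj'$.'' Given such a pair, I would contract every edge of $T$ except $ii'$ and $jj'$ (contraction of an edge of a tree keeps the transversal structure and the circular order on the surviving vertices, after identifying contracted vertices to the appropriate arc representative), landing on a tree with exactly the edges $\{i,i'\}$ and $\{j,j'\}$ on $4$ cyclically ordered vertices $i<j<i'<j'$. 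Its transversal matroid -- a path $i' - i - \text{?}$, wait, two disjoint edges form a forest not a tree, so in fact contracting down must be done carefully to land on a connected $3$-edge tree $P_4$ witness rather than $2$ disjoint edges; the right move is to contract down to the minimal subtree containing both crossing edges, which is a path $i \,\text{--}\, i' $ together with a pendant structure realizing $j, j'$ -- and then verify this specific small transversal matroid is not a positroid by checking a cyclic-interval rank inequality fails. Once that $4$- or $5$-element non-positroid witness is identified, closure of positroids under minors \cite{ARW} finishes the argument. I would present the small-case verification as a short explicit computation rather than invoking general machinery.
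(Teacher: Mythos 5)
Your high-level skeleton agrees with the paper's: one direction is Marcott's \cite[Theorem 6.3]{Marcott}, and the other direction reduces to finding two interleaving edges $(i_1,j_1),(i_2,j_2)$ and passing to a small non-positroid minor. But your proposed reduction has a genuine gap. Contracting an edge $(i,j)$ of the bipartite graph $G(X)$ merges vertices, changing the ground set $I \sqcup J$; this is not a matroid minor of $\Trans(G(X))$, and the worry you flag (landing on a two-edge forest versus a $P_4$, four elements versus five) is a symptom of trying to keep a tree structure that is never needed. Because the two crossing chords are automatically vertex-disjoint, the witness lives on exactly the four elements $\{i_1,i_2,j_1,j_2\}$: the paper simply restricts the realizing matrix $A_X$ to the columns $I \cup \{j_1,j_2\}$ and then contracts $I \setminus \{i_1,i_2\}$, which by \cref{rem:identity} amounts to looking at the $2\times 4$ submatrix with rows $\{i_1,i_2\}$ and columns $\{i_1,i_2,j_1,j_2\}$. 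No graph contraction, no ambiguity about the size of the witness.

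The second and more substantive thing you are missing is the structural input that makes the four-element minor fail to be a positroid. Since $G(X)$ is a tree it contains no $4$-cycle, so at most one of $(i_1,j_2),(i_2,j_1)$ lies in $X$. In the $2\times 4$ submatrix this forces $p_{i_1 j_1}\,p_{i_2 j_2}=0$ (one of the off-diagonal entries vanishes), while $p_{i_1 i_2}$ and $p_{j_1 j_2}$ are nonzero. With the circular order $i_1 < i_2 < j_1 < j_2$, this is incompatible with the three-term Plücker relation $p_{i_1 j_1}p_{i_2 j_2} = p_{i_1 i_2}p_{j_1 j_2} + p_{i_1 j_2}p_{i_2 j_1}$ having a nonnegative solution, and that is the whole verification. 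In particular your description of the witness as ``the transversal matroid of a $4$-cycle's worth of data'' is wrong in an instructive way: the acyclicity of the tree is exactly what \emph{forbids} the $4$-cycle and thereby creates the obstruction. The detours through $U_{2,4}$/series-parallel minors and through cyclic-interval rank inequalities are unnecessary once you have this one Plücker computation.
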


\begin{proof}
If $G(X)$ can be embedded as a planar tree in a disk as above,
	then this graph is \emph{noncrossing}, and by \cite[Theorem 6.3]{Marcott}
the transversal matroid $\Trans(G(X))$ is a positroid.

On the other hand, if it cannot be embedded as a planar tree, then 
we can find $i_1, i_2 \in I$ and $j_1, j_2 \in J$, such that 
$(i_1,j_1)$ and $(i_2,j_2)$ lie in $X$, and when we put the numbers
	$\{i_1,i_2,j_1,j_2\}$ at the boundary of a disk in the standard
	circular order, the two chords
	$(i_1,j_1)$ and $(i_2,j_2)$ cross each other.  Moreover since 
$G(X)$  is a tree, we cannot have both 
$(i_1,j_2)$ and $(i_2,j_1)$ in $X$.  
Without loss of generality we can assume that either $i_1<i_2 < j_1<j_2$
or $i_1 < j_2 < j_1 < i_2$.  Let us consider the first case.
	Then if we look at the rows labeled by $\{i_1,i_2\}$ and the 
	columns labeled by 
	$\{i_1,i_2,j_1,j_2\}$ in the matrix $A = A_X$, we find that 
	the minors $p_{i_1 i_2 }$ and $p_{j_1 j_2}$ are nonzero, but the product
	$p_{i_1 j_1} p_{i_2 j_2}$ is zero.  This fails to be a positroid on 
	$\{i_1, i_2, j_1, j_2\}$ because such conditions are incompatible
	with finding a non-negative solution to the 
	Pl\"ucker relation $p_{i_1 j_1} p_{i_2 j_2} = 
	p_{i_1 i_2} p_{j_1 j_2}+p_{i_1 j_2}p_{i_2 j_1}$.
	Using \cref{rem:identity}, we can now extend 
	this $2 \times 4$ submatrix of $A$ to a $k \times (k+2)$ submatrix
	of $A$, by adding the rows and columns indexed by 
	$I \setminus \{i_1,i_2\}$
	The second case is analogous.
\end{proof}

\subsection{From tropical pseudohyperplane arrangements to 
subdivisions of the product of simplices}

We now explain how to go between tropical pseudohyperplane arrangements and subdivisions of $\Poly_I$.
This section is based on \cite{ArdilaDevelin}, which initiated the study
of tropical oriented matroids and conjectured that they are in 
bijection with subdivisions of the product of two simplices. 
\cite{ArdilaDevelin} proved their conjecture in the case of 
$\Delta_{k-1} \times \Delta_2$, which is all we need here;  \cite{Horn}
 proved their conjecture in general.  Consult these sources for more detail.

\begin{figure}
	\begin{tikzpicture}[scale=0.8]

\draw[->, ultra thick] (0,0) -- (4.5,0);
\draw[->, ultra thick] (0,0) -- (-3,-2);
\draw[->, ultra thick] (0,0) -- (0,4.5);

\node at (-2,1.5) {\huge 1};
\node at (1,-2) {\huge 2};
\node at (2,2) {\huge 3};

\node at (0.4,0.3) {\large 123};
\node at (-1,-1.1) {\large 12};
\node at (0.3,2.3) {\large 13};
\node at (2.3,-0.3) {\large 23};

\end{tikzpicture}
\caption{The labeling of the regions of a tropical hyperplane} \label{TropHyperplaneRegions}
\end{figure}
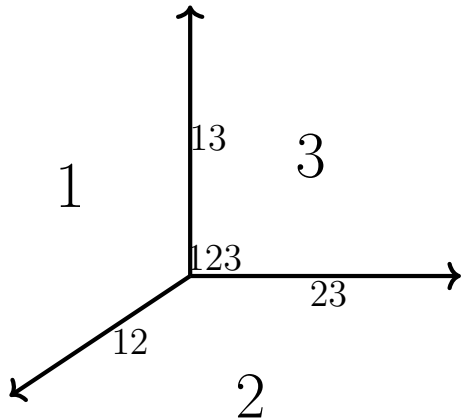
Let $\TT\PP^{k-1}$ denote \emph{tropical projective space} $\RR^k/\RR(1,1,\ldots,1)$, and let $c = (c_1, \ldots, c_k)$ be 
an element of $\TT \PP^{k-1}$. The \emph{tropical hyperplane} $H_c$ 
centered at $c$ is the set of points $(x_1, x_2, \ldots, x_k) \in \TT \PP^{k-1}$ such that 
$\min_{1 \leq j \leq k} \{x_j - c_j\}$ is not unique. 
If 
$x=(x_1, x_2, \ldots, x_k)$ is any point of $\TT \PP^{k-1}$, we 
let $S(H,x)$ be the set of indices $j \in [k]$ at which $x_j - c_j$ is minimized. 
\cref{TropHyperplaneRegions} shows a tropical hyperplane in $\TT \PP^2$,
where the horizontal and vertical coordinates are $x_1-x_3$ and $x_2-x_3$, and 
each region is labelled with the set $S(H,x)$ for $x$ in that region.
An \emph{arrangement of $m$ labelled tropical hyperplanes} is a list of $m$ tropical hyperplanes in $\TT \PP^{k-1}$. 

A \emph{tropical pseudohyperplane} $H$ is a subset of $\TT \PP^{k-1}$ which is 
PL-homeomorphic to a tropical hyperplane.  
Note that the quantity $S(H,x)$ makes sense for $H$ a tropical pseudohyperplane in $\TT \PP^{k-1}$ and $x \in \TT \PP^{k-1}$. 
An \emph{arrangement of $m$ labelled tropical
pseudohyperplanes} is a list of $m$ tropical pseudohyperplanes
 which intersect in ``reasonable" ways, 
see \cite[Section 5]{Horn} for details. 
Our main focus in this section will be  on the case of 
 tropical 
pseudohyperplanes in $\TT \PP^2$. 

Consider an arrangement of $n-k$ tropical pseudohyperplanes $H_1$, $H_2$, \dots, $H_{n-k}$ in $\TT \PP^{k-1}$. 
Given a point $x \in \TT \PP^{k-1}$, we define a subset $X(x)$ of $[k] \times [n-k]$ where $(i,j) \in X(x)$ if and only if $j \in S(H_i, x)$. 
We can thus associate to each $x\in \TT \PP^{k-1}$ 
a polytope $\gamma(X(x)) \subseteq \Delta_{k-1} \times \Delta_{n-k-1}$, 
as well as the matroid polytope $\Gamma_{\Trans(G(X(x)))}$ of the 
transversal matroid $\Trans(G(X(x)))$.
 If we let $x$ range over the bounded regions of the 
 tropical pseudohyperplane arrangement, we obtain the interior regions of a 
  subdivision of $\Delta_{k-1} \times \Delta_{n-k-1}$. Using
   \cite[Theorem 1]{DevelinSturmfels} and \cite[Theorems 1.2 and 1.3]{Horn}, this subdivision 
  is regular if and only if 
  tropical pseudohyperplane arrangement can be realized by genuine tropical hyperplanes.

\subsection{Our counterexample} 

\begin{figure}
	\begin{tikzpicture}[scale=0.5]

\filldraw[fill=gray] (3,8) rectangle ++(1,1);

\draw[thick] (0,9) -- (9,9);
\draw[thick] (9,9) -- (9,0);
\draw[thick] (0,9) -- (9,0);

\draw[thick] (6,9) -- (9,6);
\draw[thick] (5,9) -- (6,8);
\draw[thick] (7,7) -- (9,5);
\draw[thick] (4,9) -- (5,8);
\draw[thick] (6,7) -- (8,5);
\draw[thick] (4,8) -- (6,6);
\draw[thick] (7,5) -- (8,4);
\draw[thick] (2,9) -- (3,8);
\draw[thick] (4,7) -- (6,5);
\draw[thick] (7,4) -- (9,2);
\draw[thick] (1,9) -- (5,5);
\draw[thick] (6,4) -- (9,1);

\draw[thick] (1,8) -- (4,8);
\draw[thick] (5,8) -- (9,8);
\draw[thick] (2,7) -- (4,7);
\draw[thick] (5,7) -- (7,7);
\draw[thick] (8,7) -- (9,7);
\draw[thick] (4,6) -- (5,6);
\draw[thick] (6,6) -- (8,6);
\draw[thick] (5,5) -- (7,5);
\draw[thick] (8,5) -- (9,5);
\draw[thick] (5,4) -- (7,4);
\draw[thick] (8,4) -- (9,4);
\draw[thick] (6,3) -- (9,3);

\draw[thick] (3,9) -- (3,6);
\draw[thick] (4,9) -- (4,7);
\draw[thick] (4,6) -- (4,5);
\draw[thick] (5,8) -- (5,6);
\draw[thick] (5,5) -- (5,4);
\draw[thick] (6,8) -- (6,7);
\draw[thick] (6,6) -- (6,4);
\draw[thick] (7,9) -- (7,7);
\draw[thick] (7,6) -- (7,4);
\draw[thick] (7,3) -- (7,2);
\draw[thick] (8,9) -- (8,6);
\draw[thick] (8,5) -- (8,1);

\filldraw[fill=black] (0,9) circle (0.08cm);
\filldraw[fill=black] (1,9) circle (0.08cm);
\filldraw[fill=black] (2,9) circle (0.08cm);
\filldraw[fill=black] (3,9) circle (0.08cm);
\filldraw[fill=black] (4,9) circle (0.08cm);
\filldraw[fill=black] (5,9) circle (0.08cm);
\filldraw[fill=black] (6,9) circle (0.08cm);
\filldraw[fill=black] (7,9) circle (0.08cm);
\filldraw[fill=black] (8,9) circle (0.08cm);
\filldraw[fill=black] (9,8) circle (0.08cm);
\filldraw[fill=black] (9,7) circle (0.08cm);
\filldraw[fill=black] (9,6) circle (0.08cm);
\filldraw[fill=black] (9,5) circle (0.08cm);
\filldraw[fill=black] (9,4) circle (0.08cm);
\filldraw[fill=black] (9,3) circle (0.08cm);
\filldraw[fill=black] (9,2) circle (0.08cm);
\filldraw[fill=black] (9,1) circle (0.08cm);

\end{tikzpicture}

\caption{A nonregular subdivision of $9 \Delta_2$} \label{NonCoherentSubdiv}
\end{figure}
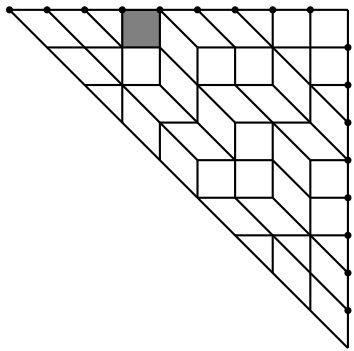

We start with the mixed 
subdivision of $9 \Delta_2$ shown in \cref{NonCoherentSubdiv}.
The subdivision of the central hexagon (with each side of length $3$)
is a standard example of a nonregular  subdivision of a hexagon into rhombi,
originally found by Richter-Gebert,
see \cite[Figure 9]{EdelmanReiner}.
Thus, this mixed subdivision of $9 \Delta_2$ is not regular.

Mixed subdivisions of $b \Delta_{a-1}$ are 
dual to arrangements of $b$ labeled tropical pseudohyperplanes in $\TT \PP^{a-1}$.
The arrangement of $9$ tropical pseudohyperplanes in $\TT \PP^2$ which 
is dual to the mixed subdivision from \cref{NonCoherentSubdiv} is 
shown in \cref{NonCoherentHyperplanes}.
In this figure we have labeled 
the coordinates of $\TT \PP^2$ by $\{ 4, 8, 12 \}$ -- 
placing the labels  at the ``ends" of the rays, according to which coordinate is becoming large along the ray -- and 
labelled the tropical pseudohyperplanes by $\{ 1,2,3,5,6,7,9,10,11 \}$, 
placing the label at the trivalent point.

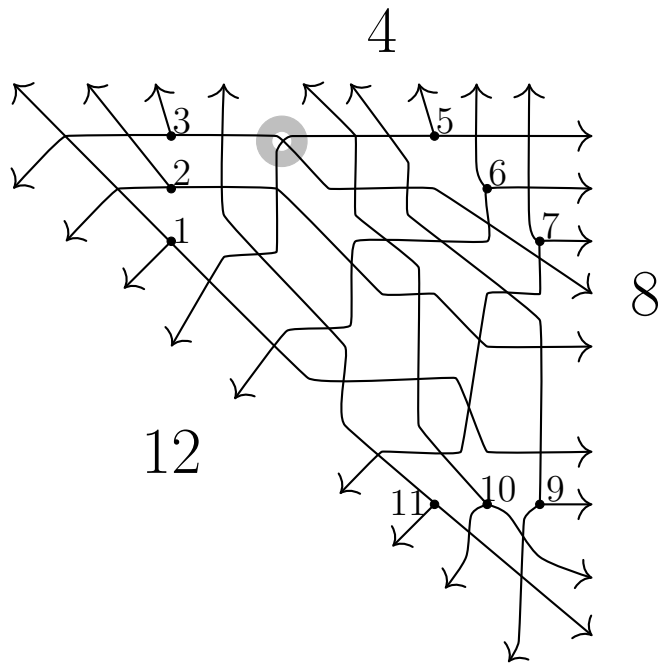
\begin{figure}
	\begin{tikzpicture}[scale=0.7]

\draw[fill={rgb:black,1;white,3},draw={rgb:black,1;white,3}] (4.1,8.9) circle (0.48cm);
\draw[fill=white,draw=white] (4.1,8.9) circle (0.17cm);

\node at (6,11) {\huge 4};
\node at (11,6) {\huge 8};
\node at (2,3) {\huge 12};

\node at (2.2,7.2) {\large 1};
\filldraw[fill=black] (2,7) circle (0.08cm); 
\node at (2.2,8.3) {\large 2};
\filldraw[fill=black] (2,8) circle (0.08cm); 
\node at (2.2,9.3) {\large 3};
\filldraw[fill=black] (2,9) circle (0.08cm);
\node at (7.2,9.3) {\large 5};
\filldraw[fill=black] (7,9) circle (0.08cm); 
\node at (8.2,8.3) {\large 6};
\filldraw[fill=black] (8,8) circle (0.08cm); 
\node at (9.2,7.3) {\large 7};
\filldraw[fill=black] (9,7) circle (0.08cm);
\node at (9.3,2.3) {\large 9};
\filldraw[fill=black] (9,2) circle (0.08cm); 
\node at (8.2,2.3) {\large 10};
\filldraw[fill=black] (8,2) circle (0.08cm); 
\node at (6.5,2.03) {\large 11};
\filldraw[fill=black] (7,2) circle (0.08cm); 

\draw[{<[length=2mm]}-{>[length=2mm]}, thick] plot [smooth, tension=0.1] coordinates {
    (-1,8) (0,9) (4,9) (5,8) (7,8) (10,6)};
\draw[{<[length=2mm]}-{>[length=2mm]}, thick] plot [smooth, tension=0.1] coordinates {
    (0,7) (1,8) (4,8) (6,6) (7,6) (8,5) (10,5)};
\draw[{<[length=2mm]}-{>[length=2mm]}, thick] plot [smooth, tension=0.1] coordinates {
    (-1,10) (4.6,4.4) (7.4,4.4) (8,3) (10,3)};
\draw[{<[length=2mm]}-{>[length=2mm]}, thick] plot [smooth, tension=0.1] coordinates {
    (2,5) (3,6.7) (4,6.8) (4,8.7) (4.3,9) (10,9)};
\draw[{<[length=2mm]}-{>[length=2mm]}, thick] plot [smooth, tension=0.2] coordinates {
    (3.2,4) (4.2,5.3) (5.4,5.4) (5.5,7) (8,7) (8,8) (10,8)};
\draw[{<[length=2mm]}-{>[length=2mm]}, thick] plot [smooth, tension=0.15] coordinates {
    (3,10) (3,7.5) (5.3,5) (5.3,3.5) (10,-0.5)};
\draw[{<[length=2mm]}-{>[length=2mm]}, thick] plot [smooth, tension=0.1] coordinates {
    (5.2,2.2) (6,3) (7.5,3) (8,6) (9,6) (9,7) (10,7)};
\draw[{<[length=2mm]}-, thick] plot [smooth, tension=0.1] coordinates {
    (4.5,10) (5.5,9) (5.5,7.5) (6.7,6.5) (6.7,3.5) (8,2)};
\draw[{<[length=2mm]}-, thick] plot [smooth, tension=0.1] coordinates {
    (5.4,10) (6.5,8.5) (6.5,7.5) (9,5.5) (9,2)};
\draw[-{>[length=2mm]}, thick] plot [smooth, tension=0.1] coordinates { 
    (9,2) (10,2)};
\draw[-{>[length=2mm]}, thick] plot [smooth, tension=0.2] coordinates { 
    (9,2) (8.7,1.7) (8.6,-0.6) (8.4,-1)};
\draw[-{>[length=2mm]}, thick] plot [smooth, tension=0.5] coordinates { 
    (8,2) (8.4,1.8) (9,1) (10,0.6)};
\draw[-{>[length=2mm]}, thick] plot [smooth, tension=0.5] coordinates { 
    (8,2) (7.7,1.8) (7.6,1) (7.2,0.4)};
\draw[-{>[length=2mm]}, thick] plot [smooth, tension=0.1] coordinates { 
    (7,2) (6.2,1.2)};
\draw[-{>[length=2mm]}, thick] plot [smooth, tension=0.5] coordinates { 
    (9,7) (8.8,7.4) (8.8,10)};
\draw[-{>[length=2mm]}, thick] plot [smooth, tension=0.5] coordinates { 
    (8,8) (7.8,8.4) (7.8,10)};
\draw[-{>[length=2mm]}, thick] plot [smooth, tension=0.5] coordinates { 
    (7,9) (6.7,10)};
\draw[-{>[length=2mm]}, thick] plot [smooth, tension=0.5] coordinates { 
    (2,9) (1.7,10)};
\draw[-{>[length=2mm]}, thick] plot [smooth, tension=0.5] coordinates { 
    (2,8) (0.4,10)};
\draw[-{>[length=2mm]}, thick] plot [smooth, tension=0.5] coordinates { 
    (2,7) (1.1,6.1)};

\end{tikzpicture}

\caption{The dual arrangement of $9$ tropical pseudohyperplanes} \label{NonCoherentHyperplanes}
\end{figure}

Also, by the ``Cayley trick" \cite{Cayley1, Cayley2}, 
mixed subdivisions of $b \Delta_{a-1}$ correspond to 
polyhedral subdivisions of $\Delta_{a-1} \times \Delta_{b-1}$,
with regular mixed subdivisions of $b \Delta_{a-1}$ corresponding to 
regular polyhedral subdivisions of $\Delta_{a-1} \times \Delta_{b-1}$.
Therefore the mixed subdivision from \cref{NonCoherentSubdiv}
corresponds to a nonregular polyhedral
subdivision of 
 $\Poly_{\{ 4,8,12 \}} \subset \Delta_{3,12}$.

It remains to check that this subdivision is positroidal.
We need to check that each of the $45$ two-dimensional polytopes in 
\cref{NonCoherentSubdiv}, or equivalently, each of the 
$45$ zero-dimensional cells of the tropical
pseudohyperplane arrangement in \cref{NonCoherentHyperplanes},
corresponds to a positroid. 
Letting $x$ be one of these zero dimensional cells, we must check that $G(X(x))$ is a tree in each case, which can be embedded in a disk as in \cref{PlanarTree}. 

For example, let $x$ be the crossing which is circled in \cref{NonCoherentHyperplanes}; the dual rhombus is shaded in \cref{NonCoherentSubdiv}. 
We have
\[ \begin{array}{lcl@{\quad}lcl@{\quad}lcl}
S(H_1, x) &=& \{ 12 \} & S(H_2,x) &=& \{ 12 \} & S(H_3, x) &=& \{ 4, 12 \} \\
S(H_5, x) &=& \{ 4,8 \} & S(H_6, x) &=& \{ 8 \} & S(H_7, x) &=& \{ 8 \} \\
S(H_9, x) &=& \{ 8 \} & S(H_{10}, x) &=& \{ 8 \} & S(H_{11}, x) &=& \{ 12 \} \\
\end{array} \] 
We draw the corresponding tree in Figure~\ref{PlanarTreeEG}.

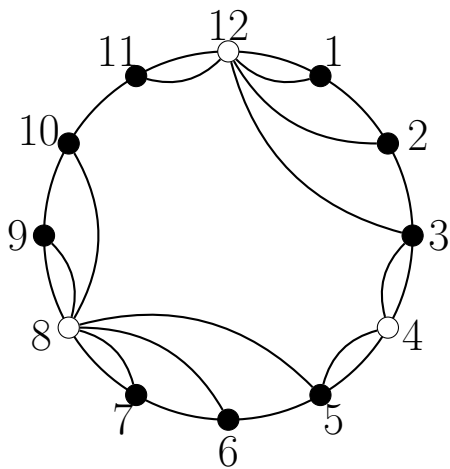
\begin{figure}

	\begin{tikzpicture}[scale=0.7]
\filldraw[fill=white, thick] (0,0) circle (3.5cm);

\node[circle, fill=black, draw=black, inner sep=0pt, minimum size=8pt] (1) at (1.75,3.031) {};
\node[circle, fill=black, draw=black, inner sep=0pt, minimum size=8pt] (2) at (3.031,1.75) {};
\node[circle, fill=black, draw=black, inner sep=0pt, minimum size=8pt] (3) at (3.5,0) {};
\node[circle, fill=white, draw=black, inner sep=0pt, minimum size=8pt] (4) at (3.031,-1.75) {};
\node[circle, fill=black, draw=black, inner sep=0pt, minimum size=8pt] (5) at (1.75,-3.031) {};
\node[circle, fill=black, draw=black, inner sep=0pt, minimum size=8pt] (6) at (0,-3.5) {};
\node[circle, fill=black, draw=black, inner sep=0pt, minimum size=8pt] (7) at (-1.75,-3.031) {};
\node[circle, fill=white, draw=black, inner sep=0pt, minimum size=8pt] (8) at (-3.031,-1.75) {};
\node[circle, fill=black, draw=black, inner sep=0pt, minimum size=8pt] (9) at (-3.5,0) {};
\node[circle, fill=black, draw=black, inner sep=0pt, minimum size=8pt] (10) at (-3.031,1.75) {};
\node[circle, fill=black, draw=black, inner sep=0pt, minimum size=8pt] (11) at (-1.75,3.031) {};
\node[circle, fill=white, draw=black, inner sep=0pt, minimum size=8pt] (12) at (0,3.5) {};

\draw[thick] (12) to[bend right] (1);
\draw[thick] (12) to[bend right] (2);
\draw[thick] (12) to[bend right] (3);
\draw[thick] (12) to[bend left] (11);
\draw[thick] (3) to[bend right] (4);
\draw[thick] (4) to[bend right] (5);
\draw[thick] (5) to[bend right] (8);
\draw[thick] (6) to[bend right] (8);
\draw[thick] (7) to[bend right] (8);
\draw[thick] (8) to[bend right] (10);
\draw[thick] (8) to[bend right] (9);

\node at (2,3.5) {\Large 1};
\node at (3.6,1.9) {\Large 2};
\node at (4,0) {\Large 3};
\node at (3.5,-1.9) {\Large 4};
\node at (2,-3.5) {\Large 5};
\node at (0,-4.1) {\Large 6};
\node at (-2,-3.5) {\Large 7};
\node at (-3.55,-1.9) {\Large 8};
\node at (-4,0) {\Large 9};
\node at (-3.6,2) {\Large 10};
\node at (-2.1,3.5) {\Large 11};
\node at (0,4) {\Large 12};

\end{tikzpicture}
\caption{The planar tree corresponding to the marked point in Figure~\ref{NonCoherentHyperplanes}. Elements of $I$ are shown in white.} \label{PlanarTreeEG}
\end{figure}

\section{Appendix. Combinatorics of cells of the positive Grassmannian.}\label{app}

In \cite{postnikov}, Postnikov defined several families of combinatorial objects which are in bijection with cells of the positive Grassmannian, including \emph{decorated permutations}, 
and equivalence classes of \emph{reduced plabic graphs}. 
Here we review these objects as well as parameterizations of cells.

\begin{defn}\label{defn:decperm}
	A \emph{decorated permutation} of $[n]$ is a bijection $\pi : [n] \to [n]$ whose fixed points are each colored either black (loop) or white (coloop). We denote a black fixed point $i$ by $\pi(i) = \underline{i}$, and a white fixed point $i$ by $\pi(i) = \overline{i}$.
An \emph{anti-excedance} of the decorated permutation $\pi$ is an element $i \in [n]$ such that either $\pi^{-1}(i) > i$ or $\pi(i)=\overline{i}$.
\end{defn}

For example, $\pi = (3,\underline{2},5,1,6,8,\overline{7},4)$ has 
a loop in position $2$, and a coloop in position $7$.
It has three anti-excedances, in positions $4, 7, 8$.
We let $k(\pi)$ denote the number of anti-excedances of $\pi$.





Postnikov showed that the positroids for $Gr_{k,n}^{\ge 0}$ are indexed by decorated permutations of $[n]$ with exactly $k$ anti-excedances 
\cite[Section 16]{postnikov}.

\begin{defn}
A {\it plabic graph}\footnote{``Plabic'' stands for {\itshape planar bi-colored}.}  is an undirected planar graph $G$ drawn inside a disk
(considered modulo homotopy)
with $n$ {\it boundary vertices} on the boundary of the disk,
labeled $1,\dots,n$ in clockwise order, as well as some {\it internal vertices}. Each boundary vertex is incident to a single edge, and each internal vertex is colored either black or white. If a boundary vertex is incident to a leaf (a vertex of degree $1$), we refer to that leaf as a \emph{lollipop}.
\end{defn}

\begin{definition}
A {\it perfect orientation\/} $\O$ of a plabic graph $G$ is a
choice of orientation of each of its edges such that each
black internal vertex $u$ is incident to exactly one edge
directed away from $u$; and each white internal vertex $v$ is incident
to exactly one edge directed towards $v$.
A plabic graph is called {\it perfectly orientable\/} if it admits a perfect orientation.
Let $G_\O$ denote the directed graph associated with a perfect orientation $\O$ of $G$.
The {\it source set\/} $I_\O \subset [n]$ of a perfect orientation $\O$ is the set of $i$ which 
	are sources
 of the directed graph $G_\O$. Similarly, if $j \in \overline{I}_{\O} := [n] - I_{\O}$, then $j$ is a sink of $\O$.
\end{definition}

	See \cref{G25-orientation} for an example.


	\begin{figure}[h]
\centering
\includegraphics[height=1.5in]{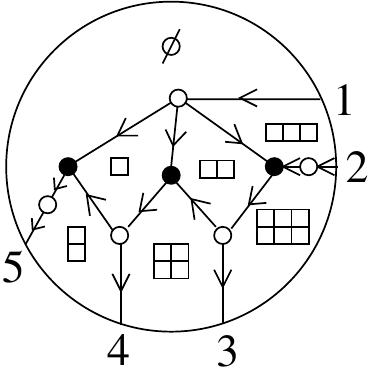}
\caption{
A plabic graph $G$ with trip permutation 
	$(3,4,5,1,2)$, together with 
	a perfect orientation $\O$ 
	with source set
$I_\O = \{1,2\}$.}
\label{G25-orientation}
\end{figure}

All perfect orientations of a fixed plabic graph
$G$ have source sets of the same size $k$, where
$k-(n-k) = \sum \mathrm{color}(v)\cdot(\deg(v)-2)$.
Here the sum is over all internal vertices $v$, $\mathrm{color}(v) = 1$ for a black vertex $v$,
and $\mathrm{color}(v) = -1$ for a white vertex;
see~\cite{postnikov}.  In this case we say that $G$ is of {\it type\/} $(k,n)$.

As shown in \cite[Section 11]{postnikov},
every perfectly orientable plabic graph gives rise to a positroid as follows.
(Moreover, every positroid can be realized in this way.)
\begin{proposition}\label{prop:perf}
Let $G$ be a plabic graph of type $(k,n)$.
Then we have a positroid
$M_G$ on $[n]$ whose bases are precisely
\[\{I_{\O} \mid \O \text{ is a perfect orientation of }G\},\]
where $I_\O$ is the set of sources of $\O$.
\end{proposition}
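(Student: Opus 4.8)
The plan is to realize $M_G$ explicitly as the column matroid $M(A)$ of a \emph{boundary measurement matrix} $A$ attached to $G$, following \cite[Section~11]{postnikov}. Fix a perfect orientation $\O_0$ of $G$ with source set $I_0 = I_{\O_0}$, and assign a generic positive real weight $w_e$ to each edge $e$ of $G$. First I would recall Postnikov's boundary measurement matrix $A = A(G,\O_0,w)$: it is a $k\times n$ matrix with rows indexed by $I_0$ whose submatrix on the columns $I_0$ is (a signed form of) the identity, and whose remaining entries are the signed \emph{boundary measurements} of the network --- each a resummed signed sum, over directed source-to-sink walks in $G_{\O_0}$, of the product of the traversed edge weights, hence a well-defined rational function in the $w_e$.

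The crucial input is the flow formula: for every $J\in\binom{[n]}{k}$ one has
\[
p_J(A) \;\propto\; \sum_{F\colon\, I_0 \to J}\ \prod_{e\in F} w_e ,
\]
where $F$ ranges over all \emph{flows} from $I_0$ to $J$ in $G_{\O_0}$ (families of pairwise vertex-disjoint directed paths from $I_0\setminus J$ to $J\setminus I_0$, together with pairwise vertex-disjoint directed cycles), and the proportionality constant is a single sign times a subtraction-free rational factor independent of $J$. This is a Lindstr\"om--Gessel--Viennot-type computation, and arranging the sign conventions so that the right-hand side is genuinely \emph{subtraction-free} --- so that $A$ represents a point of $Gr_{k,n}^{\ge 0}$, with $p_J(A) > 0$ precisely when a flow $I_0 \to J$ exists and $p_J(A) = 0$ otherwise --- is the step I expect to be the main obstacle; everything else is bookkeeping.

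Granting the flow formula, it remains to match flows with perfect orientations: there is a flow from $I_0$ to $J$ in $G_{\O_0}$ if and only if $J = I_\O$ for some perfect orientation $\O$ of $G$. For the ``if'' direction, given $\O$ with $I_\O = J$, the set of edges on which $\O$ and $\O_0$ disagree, oriented as in $\O$, is exactly such a flow: the perfect-orientation condition at each internal vertex forces this symmetric-difference edge set to decompose into vertex-disjoint directed paths and cycles whose path endpoints on the boundary are precisely $(I_0\setminus J)\sqcup(J\setminus I_0)$. Conversely, given a flow $F$ from $I_0$ to $J$, reversing in $\O_0$ the orientation of every edge used by $F$ yields an orientation that still satisfies the perfect-orientation condition at every internal vertex, with source set $J$. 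Combining this with the flow formula, the support of the Pl\"ucker vector of $A$ is exactly $\{I_\O : \O \text{ a perfect orientation of } G\}$; these subsets all have size $k$ (as recalled just before the statement) and all $p_J(A)\ge 0$, so $M(A)$ is a positroid, and we set $M_G := M(A)$. Finally I would note that the construction does not depend on the generic choice of $w$, since a nonempty finite sum of positive monomials is positive and an empty sum is $0$, so the support --- hence the matroid --- is the same for every choice of positive edge weights.
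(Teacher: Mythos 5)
The paper does not actually prove Proposition~\ref{prop:perf}; it cites \cite[Section~11]{postnikov} and records the proposition as background. Your argument is exactly Postnikov's: pick a reference perfect orientation $\O_0$, build the boundary measurement matrix, invoke the subtraction-free flow formula for its Pl\"ucker coordinates (which is the content of \cite[Theorem~12.7]{postnikov}, equivalently \cite[Theorem~1.1]{Talaska}, quoted later in the appendix as Theorem~\ref{network_param}), and then identify the support of the Pl\"ucker vector with source sets of perfect orientations via the symmetric-difference bijection between flows in $G_{\O_0}$ and perfect orientations of $G$. So this is correct and is essentially the intended reference proof.

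One small slip in the symmetric-difference step: given a perfect orientation $\O$ with $I_\O = J$, the disagreement set $E'$ should be taken \emph{with its $\O_0$-orientation} to produce a flow from $I_0$ to $J$ in $G_{\O_0}$; with its $\O$-orientation it is instead a flow from $J$ to $I_0$ in $G_\O$. Your local analysis at black and white vertices is right, and the boundary bookkeeping works out as you say, but the paths run from $I_0\setminus J$ to $J\setminus I_0$ only under the $\O_0$-orientation (a boundary vertex $i\in I_0\setminus J$ has its edge outgoing in $\O_0$, hence is a path source; in $\O$ that same edge is incoming). This is a labeling issue and does not affect the substance of the bijection or the conclusion.
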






Each positroid cell corresponds to a family of \emph{reduced plabic graphs}
which are related to each other by 
certain \emph{moves}; see \cite[Section 12]{postnikov}.
From a reduced plabic graph $G$, we can read off the corresponding decorated permutation $\pi_G$ as follows.
\begin{defn}\label{def:rules}
	Let $G$ be a reduced plabic graph of type $(k,n)$ 
	with boundary vertices $1,\dots, n$. For each boundary vertex $i\in [n]$, we follow a path along the edges of $G$ starting at $i$, turning (maximally) right at every internal black vertex, and (maximally) left at every internal white vertex. This path ends at some boundary vertex $\pi(i)$. By \cite[Section 13]{postnikov}, the fact that $G$ is reduced implies that each fixed point of $\pi$ is attached to a lollipop; we color each fixed point by the color of its lollipop. In this way we obtain the \emph{decorated permutation} $\pi_G = \pi$ of $G$. 
	The decorated permutation $\pi_G$ will have precisely $k$ anti-excedances.
\end{defn}

We now explain how to parameterize elements of positroid cells using perfect orientations
of reduced plabic graphs.

We will associate a parameter $x_{\mu}$ to each face of $G$, letting
$\mathcal{P}_G$ denote the indexing set for the faces.
We require that the product $\prod_{\mu\in \mathcal{P}_G} x_{\mu}$ of all parameters equals $1$.
A \emph{flow} $F$ from $I_{\O}$ to a set $J$ of boundary vertices
with
$|J|=|I_{\O}|$
is a collection of paths and closed cycles
in $\O$, all pairwise vertex-disjoint,
such that the sources of the paths are $I_{\O} - (I_{\O} \cap J)$
and the destinations of the paths are $J - (I_{\O} \cap J)$.

Note that each directed path and cycle %
$w$ in $\O$ partitions the faces of $G$ into those
which are on the left and those which are on the right of $w$.
We define the \emph{weight} $\wt(w)$ of
each such path or cycle %
to be the product of
parameters $x_{\mu}$, where $\mu$ ranges over all face labels
to the left of the path.  And we define the
\emph{weight} $\wt(F)$ of a flow $F$ to be the product of the weights of
all paths and cycles in the flow.

Fix a perfect orientation $\O$ of a reduced plabic graph $G$.
Given $J \in {[n] \choose k}$,
we define the {\it flow polynomial}
\begin{equation}\label{eq:Plucker}
p_J^G = \sum_F \wt(F),
\end{equation}
where $F$ ranges over all flows from $I_{\O}$ to $J$.

\begin{example}
Consider the graph
from Figure \ref{G25-orientation}. There are two flows $F$
from $I_{\O}$ to $\{2,4\}$, and
$P^G_{\{2,4\}} = x_{\ydiagram{3}} x_{\ydiagram{2,2}} x_{\ydiagram{3,3}}
+x_{\ydiagram{2}} x_{\ydiagram{3}} x_{\ydiagram{2,2}} x_{\ydiagram{3,3}}$.
There is one flow from $I_{\O}$ to $\{3,4\}$, and
$P^G_{\{3,4\}} = x_{\ydiagram{2}} x_{\ydiagram{3}} x_{\ydiagram{2,2}}
x_{\ydiagram{3,3}}^2.$
\end{example}

The following result is a combination of 
\cite[Theorem 12.7]{postnikov} and 
\cite[Theorem 1.1]{Talaska}.

\begin{theorem}\label{network_param}
	Let $G$ be a reduced plabic graph of type $(k,n)$, and choose
a perfect orientation $\O$ with source set $I_{\O}$.
 Then the map $\Phi_G$ sending
$(x_{\mu})_{\mu \in \mathcal{P}_G}
	\in (\R_{>0})^{\mathcal{P}_G}$ to the collection 
	of flow polynomials $\{p_J^G\}_{J\in {[n] \choose k}}$
	is a homemorphism from 
	$(\R_{>0})^{\mathcal{P}_G}$ to 
	the corresponding positroid cell $S_G \subset Gr_{k,n}$ (realized in its Pl\"ucker 
	embedding).
\end{theorem}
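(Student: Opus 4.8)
The plan is to identify $\Phi_G$ with the composition of Postnikov's boundary measurement parametrization of the cell $S_G$ with Talaska's combinatorial evaluation of the maximal minors of a boundary measurement matrix, and then to transport the homeomorphism property through this identification. Thus the theorem is a synthesis of the two cited results, and the work lies in matching conventions and in a positivity (no-cancellation) observation.

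First I would recall the boundary measurement construction. Having fixed the perfect orientation $\mathcal{O}$ with source set $I_{\mathcal{O}}$, assign positive weights $x_\mu$ to the faces $\mu \in \mathcal{P}_G$ (with $\prod_\mu x_\mu = 1$ as required); via the standard gauge this is equivalent to a choice of positive edge weights. For a source $i \in I_{\mathcal{O}}$ and a sink $j$, the generating function $M_{ij}$ of all directed walks from $i$ to $j$ is a subtraction-free rational expression in the weights — a geometric series in the contributions of directed cycles — and one checks it is well defined on all positive weights. Arranging the $M_{ij}$, with Postnikov's sign conventions, into a $k \times n$ matrix $A(G,\mathcal{O})$ in reduced row echelon form with respect to the columns $I_{\mathcal{O}}$, the assertion of \cite[Theorem 12.7]{postnikov} is that as the face weights vary the row span of $A(G,\mathcal{O})$ sweeps out exactly the positroid cell $S_G = S_{\pi_G}$, and that the resulting map is a homeomorphism onto $S_G$; its inverse is given by explicit ratios of Plücker coordinates, exactly in the spirit of the map $\Psi$ exhibited for $\Web_{k,n}$ earlier in this section.

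Next I would invoke \cite[Theorem 1.1]{Talaska}, which evaluates the minor of $A(G,\mathcal{O})$ on a column set $J \in \binom{[n]}{k}$ as a signed sum $\sum_F \pm\,\wt(F)$ over flows $F$ from $I_{\mathcal{O}}$ to $J$. The point to verify is that, because $G$ is reduced and $\mathcal{O}$ is a perfect orientation, on the totally nonnegative locus all flows over a fixed $J$ contribute with one and the same sign, so no cancellation occurs and the minor on $J$ equals $\pm\,p_J^G$ with the global sign independent of $J$. In particular $p_J^G > 0$ exactly when some flow from $I_{\mathcal{O}}$ to $J$ exists, which by the characterization of bases via perfect orientations (\cref{prop:perf}) holds exactly when $J$ is a basis of $M_G$; hence the Plücker vector $\{p_J^G\}$ really lies in the cell $S_G$, not merely in $Gr_{k,n}$.

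Combining the two: up to the harmless common rescaling just identified, $\Phi_G$ is Postnikov's boundary measurement map followed by reading off Plücker coordinates, so it inherits the homeomorphism property — it is a continuous bijection from the space of positive face weights onto $S_G$ in its Plücker embedding, with continuous inverse expressed through Laurent monomials in Plücker coordinates — and absorbing the $\prod_\mu x_\mu = 1$ normalization into the statement yields the claim as written. The step I expect to demand the most care is the middle one: aligning Postnikov's edge/face-weight and sign conventions for $A(G,\mathcal{O})$ with Talaska's flow formula, and verifying the uniform-sign phenomenon on the totally nonnegative locus. This is precisely where reducedness of $G$ and the perfect-orientation hypothesis enter, and it is what promotes the bare identity ``minor $=$ signed flow sum'' to the positivity statement $p_J^G = \sum_F \wt(F) \ge 0$ needed to conclude that the image is the positroid cell rather than an arbitrary point of the Grassmannian.
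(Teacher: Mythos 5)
Your proposal is essentially the same approach the paper takes: the paper itself offers no proof, stating only that the theorem ``is a combination of [Postnikov, Theorem 12.7] and [Talaska, Theorem 1.1],'' and your proposal correctly identifies and assembles those two ingredients (boundary measurement homeomorphism plus flow-polynomial evaluation of minors). One small imprecision: Talaska's Theorem 1.1 already gives the Pl\"ucker coordinate as the \emph{subtraction-free} sum $\sum_F \wt(F)$ over flows, with no signs to reconcile — the sign bookkeeping in Postnikov's boundary measurement matrix cancels in the minors by construction — so the ``uniform sign'' verification you flag as the delicate step is in fact already built into the cited result rather than something you would need to re-establish.
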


\bibliographystyle{alpha}
\bibliography{bibliography}

\end{document}